\documentclass{amsart}
\usepackage{graphicx} 
\usepackage{amssymb}
\usepackage{xcolor}
\usepackage{amsthm}
\usepackage{float}
\usepackage{amsmath,amsfonts,amssymb}
\usepackage{tikz-cd}
\usepackage[shortlabels]{enumitem}
\setlist[enumerate]{leftmargin=*}
\usepackage{mathrsfs}   

\newcommand{\wh}[1]{\widehat{#1}}

\newcommand{\wt}[1]{\widetilde{#1}}

\newcommand{\Map}{\operatorname{Map}}
\newcommand{\wei}{\operatorname{wt}}

\newcommand{\Spec}{\operatorname{Spec}}
\newcommand{\Pic}{\operatorname{Pic}}
\newcommand{\Ann}{\operatorname{Ann}}
\newcommand{\Res}{\operatorname{Res}}

\newcommand{\lcm}{\operatorname{lcm}}
\newcommand{\id}{\operatorname{id}}
\newcommand{\val}{\operatorname{val}}
\newcommand{\Ter}{\operatorname{Ter}}
\newcommand{\opp}{\operatorname{op}}
\newcommand{\Opp}{^{\opp}\mc O}
\renewcommand{\div}{\operatorname{div}}
\newcommand{\Proj}{\operatorname{Proj}}

\newcommand{\mc}{\mathcal}
\newcommand{\ov}{\overline}
\newcommand{\un}{\underline}
\newcommand{\bb}{\mathbb}

\newtheorem{theorem}{Theorem}[section]
\newtheorem{definition}[theorem]{Definition}
\newtheorem{lemma}[theorem]{Lemma}
\newtheorem{proposition}[theorem]{Proposition}
\newtheorem{corollary}[theorem]{Corollary}
\newtheorem{remark}[theorem]{Remark}
\newtheorem{question}[theorem]{Question}

\newenvironment{sis}{\left\{\begin{aligned}}{\end{aligned}\right.}

\usepackage[backref=page]{hyperref}

\hypersetup{
 colorlinks,
 citecolor=green,
 linkcolor=blue,
 urlcolor=blue}

\usepackage{booktabs}
\usepackage{color}

\numberwithin{equation}{section}

\usepackage{tikz}
\usetikzlibrary{decorations.pathmorphing}
\tikzset{dot/.style={
		circle,
		fill=black,
		inner sep=1.5pt,
}}

\title{The Hassett-Keel program via $\Theta$-stability}

\author{Luca Tasin}
\address{Dipartimento di Matematica F.\ Enriques, Universit\`a degli Studi di Milano, Via Cesare Saldini 50, 20133 Milano, Italy} 
\email{luca.tasin@unimi.it}

\author{Filippo Viviani}
\address{Dipartimento di Matematica, Universit\`a degli Studi di Roma Tor Vergata, Via della Ricerca Scientifica, 00133 Roma, Italy.}
\email{viviani@mat.uniroma2.it}

\date{\today}

\begin{document}

\begin{abstract}
We study the intrinsic notion of $\alpha$-stability for curves arising from the Beyond GIT approach to moduli spaces. We show that it recovers Deligne--Mumford stability for $9/11<\alpha\le1$, and more generally that the $\alpha$-semistable locus is contained in the known modular compactification of the Hassett-Keel proram for $\alpha>2/3-\varepsilon$. As applications, we also obtain new $\alpha$-stability results for smooth curves. Our approach combines slope inequalities with a degeneration theorem showing that every Gorenstein curve with a non-nodal singularity can be isotrivially degenerated to a Gorenstein curve with a $\bb G_m$-action.
\end{abstract}

\maketitle

\setcounter{tocdepth}{1}
\tableofcontents

\section{Introduction}

Let  $\ov{\mc M}_{g,n}$  be the stack of stable $n$-pointed curves of genus $g$ and denote with $\ov M_{g,n}$ its coarse moduli space. For any rational number $\alpha \in [0,1]$ consider the projective variety 
$$
\ov M_{g,n}(\alpha):= \mathrm{Proj}\bigoplus_{m \ge 0} H^0(\ov M_{g,n}, m(K_{\ov{\mc M}_{g,n}} + \alpha\delta+(1-\alpha)\psi)),
$$ 
where $\delta$ is the total boundary divisor and $\psi$ is the total cotangent class.  
The Hassett-Keel program aims to provide a modular interpretation of $\ov M_{g,n}(\alpha)$, namely to realize them as good moduli spaces of appropriate stacks of curves, and to use their modular interpretation in order to study the rational map $f_{\alpha}: \ov M_{g,n}\dashrightarrow \ov M_{g,n}(\alpha)$. 
The first two steps of the Hassett-Keel program (which cover the case where $\alpha \geq 7/10-\epsilon$ with $0<\epsilon \ll 1$) were constructed by Hassett-Hyeon in \cite{HH1, HH2} for $n=0$, using GIT of the Hilbert/Chow scheme of pluticanonical embedded curves. 
To construct the third step (and to generalize the previous cases to $n>0$), in the trilogy  \cite{AFSV1,AFS2,AFS3}, Alper-Fedorchuk-Smyth-van der Wyck introduced a new technology using local variation of GIT for stacks. For $\alpha \ge 2/3 - \varepsilon$ with $0 < \varepsilon \ll 1$, they defined algebraic stacks $\ov{\mc M}_{g,n}(\alpha)$ parametrizing suitable singular curves whose good moduli spaces are exactly $\ov M_{g,n}(\alpha)$, see Definition \ref{D:Mgalpha}.
Based on the same methods, the first steps of the MMP of $\ov M_{g}$ have been studied in \cite{CTV21,CTV23a, Gori}. 
For $g \le 4$, the Hassett-Keel program of $\ov M_g$ has been fully understood, see \cite{Has0, HL10, Fedorchuk12, CSJL, LZ25, ADLW}. See also  \cite{zhao2023} for partial recent progress in the genus 6 case. 

Recently, a more intrinsic approach to the study of moduli problems has been developed, based on the so-called ``Beyond GIT'' framework introduced by Halpern-Leistner; see \cite{HL} for a comprehensive treatment of the theory and \cite{GMP1, GMP2} for an application to the theory of $A_r$-stable curves. For a recent survey on this approach, including several interesting open questions, see \cite{AHL}.

Within this framework, we adopt the following set-up. Let $\mathcal U_{g,n}$ be the stack of Gorenstein reduced connected $n$-pointed (i.e. with $n$ marked pairwise distinct and smooth points $\{p_i\}$) projective curves $(C,p_i)$ of genus $g$ with $\omega_C^{log}:=\omega_C(\sum p_i)$ ample. The integral stack $\ov {\mc M}_{g,n}$ of stable $n$-pointed curves of genus $g$ is an open substack of $\mc U_{g,n}$, and the closure of $\ov{\mc M}_{g,n}$ in $\mc U_{g,n}$ is the irreducible component parametrizing smoothable curves $(C,p_i)$ of $\mc U_{g,n}$. The canonical divisor $K$ of $\ov{\mc M}_{g,n}$, the boundary divisor $\delta$ and the total cotangent class $\psi$ can be extended from $\ov {\mc M}_{g,n}$ to $\mc U_{g,n}$ (see \S \ref{Sub:stack-curves} for more details).
Given $\alpha \in [0,1]$, we say that  curve $C \in  \mathcal U_{g,n}$ is \emph{$\alpha$-semistable} if for any test configuration $T$ in $\mc U_{g,n}$ with general fibre $C$, the weight $\wei_T(K+\psi+\alpha(\delta-\psi))$ is non-negative. We denote by $\mathcal U_{g,n}(\alpha)$ the locus in $\mathcal U_{g,n}$ consisting of smoothable $\alpha$-semistable curves (see \S\ref{Sub:Theta-stability} for more details).

The following natural question could provide an interpretation of the Hassett-Keel program via $\Theta$-stability.

\begin{question}\label{q:alpha-stability}
Given $\alpha\in [0,1]$, is there a good moduli space morphism $\phi_{\alpha}:\mathcal U_{g,n}(\alpha) \to \ov M_{g,n}(\alpha)$ in such a way that $\phi_{\alpha}^*\mc O_{\ov M_{g,n}(\alpha)}(1)=K+\psi+\alpha(\delta-\psi)$?  
\end{question}
The above Question provides a concrete proposal for the Modularity Principle of \cite[Principle 1.2]{AFS16} and it was raised in \cite[Sec. 6.3]{AHL}.
Note that, a priori, it is not even clear that $\mc U_{g,n}(\alpha)$ is locally closed in $\mc U_{g,n}$ and hence that it has the structure of an algebraic stack: the above Question requires this property to hold. Moreover, there could be other choices for the universe stack $\mc U_{g,n}$ in which one could ask the same question (e.g. allowing non-reduced or non-Gorenstein curves): the choice of our universe stack is dictated by the techniques used in the proof of our results and also by the set-up in \cite{AFS16}.

Our main result is the following, which gives a positive answer to Question \ref{q:alpha-stability} for large values of $\alpha$, thus solving positively one of the open questions in \cite[Sec. 6.3]{AHL}.

\begin{theorem}[Theorem \ref{T:stab} + Corollary \ref{C:destab9/11}]\label{thm:main}
A smoothable curve  $C \in \mathcal U_{g,n}$ is $\alpha$-semistable for $9/11 < \alpha \le 1$ if and only if it is stable. In other words, we have that  
$$\mathcal U_{g,n}(\alpha)=\ov{\mc M}_{g,n} \quad \text{ for } \alpha \in (9/11,1].
$$
\end{theorem}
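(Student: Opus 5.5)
The proof splits into two inclusions: every stable curve is $\alpha$-semistable for $9/11<\alpha\le 1$, and every smoothable $\alpha$-semistable curve in that range is stable.

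\textbf{Stable curves are $\alpha$-semistable.} Let $(C,p_i)\in\ov{\mc M}_{g,n}$ and let $T$ be a test configuration in $\mc U_{g,n}$ with general fibre $C$. The special fibre $C_0$ is a smoothable Gorenstein curve. By properness of $\ov{\mc M}_{g,n}$, together with the stable reduction comparison, we can relate $T$ to the trivial test configuration over the stable locus. I would then write the weight of $L_\alpha:=K+\psi+\alpha(\delta-\psi)$ as a sum of local contributions at the singularities of $C_0$ and show each contribution is non-negative. Nodal singularities contribute zero, since $\ov{\mc M}_{g,n}$ is Deligne--Mumford and its $\bb G_m$-actions are trivial. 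At non-nodal singularities I would use slope-type inequalities for families of curves degenerating to curves with worse singularities; these relate $\delta$ and $\lambda$ (with $K=13\lambda-2\delta+\psi$ by Mumford's formula) to the local invariants of the singularity. The outcome should be that $\wei_T(L_\alpha)\ge 0$ for $\alpha>9/11$, with equality only for product configurations. This also shows the stable locus is open and closed inside $\mc U_{g,n}(\alpha)$.

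\textbf{$\alpha$-semistable curves are stable.} Let $C$ be smoothable and non-stable. Either $C$ has a non-nodal singularity, or it is nodal and non-stable; the latter is excluded by ampleness of $\omega^{log}$ in $\mc U_{g,n}$. So $C$ has a non-nodal singularity. By the degeneration theorem announced in the abstract, $C$ isotrivially degenerates to a Gorenstein curve $C_0$ with a $\bb G_m$-action, and $C_0$ keeps a non-nodal, quasi-homogeneous singularity. This degeneration is a test configuration $T$. For a suitable choice of its direction (or its inverse), the weight $\wei_T(L_\alpha)$ equals the character of $\bb G_m$ on the fibre of $L_\alpha$ at $C_0$. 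I would compute this character from
\[
\wei(\lambda),\qquad \wei(\delta),\qquad \wei(\psi),
\]
via the $\bb G_m$-action on $H^0(\omega)$ and on the deformation space of the singularity. For the mildest case, the cusp $A_2$ with its $\bb G_m$-equivariant elliptic tail, the characters give a weight that changes sign exactly at $\alpha=9/11$, in the style of \cite{AFSV1}. Worse singularities should give threshold values at most $9/11$, so the weight is negative for $\alpha>9/11$ and $C$ is unstable. This is the content of Corollary \ref{C:destab9/11}.

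\textbf{Main obstacle.} I expect the hardest part to be the uniform sign estimate for the weight of an \emph{arbitrary} test configuration through a stable curve, which is needed for Theorem \ref{T:stab}. The special fibre can be any Gorenstein curve, so the local weights must be bounded for all singularity types at once. I would first reduce to $C_0$ carrying a $\bb G_m$-action, which is automatic for the special fibre of a test configuration. I would then bound the $\delta$-weight from below by a multiple of the $\lambda$-weight using the $\bb G_m$-weights on the $\delta$-invariant space $\tilde{\mathcal O}/\mathcal O$ of each singularity. The key inequality should be that the weights on $\tilde{\mathcal O}/\mathcal O$ and its Gorenstein dual control the weight on the Hodge bundle with ratio $11/9$.
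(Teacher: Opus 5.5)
\textbf{Stable $\Rightarrow$ $\alpha$-semistable.} This direction has a genuine gap: you give no actual argument, and the local strategy you sketch would not work as stated. The weight of an arbitrary test configuration is a global character, on $\det H^0(\mc C_0,\omega^{\otimes k})$. When $\mc C_0$ has non-nodal points, $\bb G_m$ can act non-trivially on whole chains of components. Any localization to singular points then produces terms whose sign depends on the orientation of the action. An atom $\ov X(\mc O)^S$ contributes $w_\alpha(\mc O^S)<0$ for $\alpha>9/11$, while $\ov X(\Opp)^S$ contributes $-w_\alpha(\mc O^S)>0$. Which orientation occurs is dictated by the general fibre, not by $\mc C_0$ as a curve. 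For example, the same curve $Z^c\cup\ov X(A_2)$ is the central fibre of a test configuration of a cuspidal curve and, with the inverse action, of an elliptic-tail curve. So ``each local contribution is non-negative'' would need a local theorem tying the orientation at every singular point to nodality of the general fibre, uniformly over all Gorenstein singularities. That is the whole difficulty, not a step.

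Your target inequality is also oriented the wrong way. Since $K+\psi+\alpha(\delta-\psi)=13\lambda-(2-\alpha)(\delta-\psi)$, you need the \emph{upper} bound $\wei_T(\delta-\psi)\le 11\,\wei_T(\lambda)$ together with $\wei_T(\lambda)\ge 0$. Equivalently, you need $\wei_T(\lambda_{CM})\ge \wei_T(\lambda)\ge 0$; a lower bound on the $\delta$-weight does not help.

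The missing idea is to prove this globally, as the paper does:
\begin{enumerate}
\item Compactify to $\ov f:(\ov{\mc C},\Delta)\to\bb P^1$. By Lemma \ref{l:weight-test}, $\wei_T=(2-\alpha)(K_{\ov{\mc C}/\bb P^1}+\Delta)^2+(12\alpha-11)\deg\ov f_*\mc O(K_{\ov{\mc C}/\bb P^1}+\Delta)$.
\item Nodality of the general fibre and reducedness of $\mc C_0$ make $\ov{\mc C}$ demi-normal (Lemma \ref{l:singularities-test}). This is exactly where the hypothesis enters.
\item Once $K+\Delta$ is known to be nef, the slope inequality $(K+\Delta)^2\ge\deg\ov f_*\mc O(K+\Delta)$ of \cite{CTV23} applies.
\item Nefness of $\ov f_*\mc O(K+\Delta)$ then gives $\wei_T\ge(11\alpha-9)\deg\ov f_*\mc O(K+\Delta)\ge 0$.
\end{enumerate}
No analysis of the singularities of $\mc C_0$ is needed. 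Your side claims (equality only for products; the stable locus is open and closed) are unsupported and not needed.

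\textbf{$\alpha$-semistable $\Rightarrow$ stable.} Your plan here is the paper's (Theorem \ref{T:deg-sing} and Corollary \ref{C:deg-sing}), with three corrections.
\begin{enumerate}
\item You cannot choose ``the direction or its inverse''. The condition $T(1)=C$ fixes the orientation: the central fibre contains the dangling atom $\ov X(\mc O)^S$ with the standard action, and the weight is $w_\alpha(\mc O^S)$. The inverse action is a test configuration of a different curve (the Chen--Yu one).
\item For a cusp, that atom is a cuspidal rational tail, not an elliptic tail.
\item ``Worse singularities should give thresholds at most $9/11$'' must be proved for every smoothable $\mc O$ and every dangling set $S$. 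The set $S$ comes from contracting rational $1$-pointed components, and it changes $\chi_2^{\log}$.
\end{enumerate}
The paper handles the third point through the classification in Lemma \ref{l:4} and Theorem \ref{T:destab}. For the threshold $9/11$ alone, it suffices to combine
\begin{enumerate}
\item $\chi_1^{\log}(\mc O^S)=\chi_1^{\log}(\mc O)$;
\item $\chi_2^{\log}(\mc O^S)=(2g-2+b)l+\chi_1^{\log}(\mc O)-\sum_{i\in S}l/(c_i-1)$;
\item the Clifford bound $\chi_1^{\log}(\mc O)\le (g+1)l/2$;
\item $c_i\ge 3$ for $i\in S$.
\end{enumerate}
These give $\chi_2^{\log}(\mc O^S)\ge 2\chi_1^{\log}(\mc O^S)>0$, hence $w_\alpha(\mc O^S)\le (9-11\alpha)\chi_1^{\log}(\mc O^S)<0$ for $\alpha>9/11$.
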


The above Theorem is expected to be sharp, in view of Question \ref{q:alpha-stability},  since the rational map $\ov M_{g,n}\dashrightarrow \ov M_{g,n}(\alpha)$ is an isomorphism if and only if $9/11 < \alpha \le 1$ (see \cite[Chap. XIV, Thm. 5.2]{GAC2}).
We now explain in detail the proof of the two implications in Theorem \ref{thm:main}, and the several byproducts that we get. 

In order to prove the  if implication in Theorem \ref{thm:main}, i.e. the statement that a smoothable non-nodal curve $C \in \mathcal U_{g,n}$ is not $\alpha$-semistable for $9/11< \alpha \le 1$,
the crucial ingredient is the following Theorem which says that we can always isotrivially degenerate a Gorenstein curve with a non-nodal singularity to a Gorenstein curve with $\mathbb G_m$-action which is concentrated on an atom of a Gorenstein curve singularity with $\bb G_m$-action.  

\begin{theorem}\label{thm:deg-sing}(Corollary \ref{C:lim-O}+ Theorem \ref{T:deg-sing})
 Let $(C,p_i)\in \mc U_{g,n}$ having a singular point $q\in C$ which is not a node. Then there exists a  Gorenstein (non-nodal) curve singularity $\mc O$ with $\bb G_m$-action with the same normalization and the same conductor ideal as $\wh{\mc O}_{C,q}$ and  a test configuration $(\mc C,\sigma_i)\to \bb A^1$ for $(C,p_i)$ such that 
 \begin{enumerate}[label=(\arabic*)]
     \item \label{thm:deg-sing1} $(\mc C_0,\sigma_i(0))$ is obtained from the pointed normalization $\nu: (C,p_i)^{\nu}\to (C,p)$ at $p$ and the projective atom $\ov{X}(\mc O)$ for $\mc O$ by gluing nodally the marked points $\{q_i\}$ of $\ov{X}(\mc O)$ with the points of $\nu^{-1}(q)$, and then contracting the components where the log-canonical line bundle is not ample.
\item \label{thm:deg-sing2} The action of $\bb G_m$ on $\mc C_0$, induced by the test configuration, is trivial on the image of $(C,p_i)^{\nu}$ and it is the standard action on the complementary subcurve, which is an $S$-dangling projective atom  $\ov{X}(\mc O)^S$.
 \end{enumerate}
\end{theorem}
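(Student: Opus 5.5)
The plan has two parts, matching the two cited results: first a local isotrivial degeneration of the complete local ring $\wh{\mc O}_{C,q}$ to a $\bb G_m$-equivariant Gorenstein singularity $\mc O$ (Corollary \ref{C:lim-O}), then a globalization gluing this local family into a test configuration for $(C,p_i)$ (Theorem \ref{T:deg-sing}).

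\textbf{Local step.} Let $\wt{\mc O}=\prod_{j=1}^b k[[t_j]]$ be the normalization of $\wh{\mc O}_{C,q}$ and $\mathfrak c$ its conductor, so $\mathfrak c=\prod_j t_j^{c_j}k[[t_j]]$. Since $\mathfrak c\subset\wh{\mc O}_{C,q}\subset\wt{\mc O}$, the ring is determined by the finite-dimensional subspace $V=\wh{\mc O}_{C,q}/\mathfrak c\subset \wt{\mc O}/\mathfrak c$. I would choose a one-parameter subgroup of $\mathrm{Aut}(\wt{\mc O})$ acting by $t_j\mapsto \lambda^{w_j}t_j$, with positive integer weights $w_j$ chosen so that $\mathfrak c$ is homogeneous (automatic for monomial ideals).

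Taking the flat limit of $V$ as $\lambda\to 0$ in the Grassmannian gives the associated graded (initial) subalgebra $\mc O=\mathrm{in}_w(\wh{\mc O}_{C,q})$. This ring has the same normalization and, by construction, contains $\mathfrak c$. Its $\delta$-invariant is unchanged because $\dim V$ is preserved.

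Gorensteinness of the limit follows from symmetry of the value semigroup. For branches, Gorenstein is equivalent to $\dim\wt{\mc O}/\mathfrak c=2\delta$. Here $\delta$ is preserved, so I need the conductor of $\mc O$ to be exactly $\mathfrak c$, not something larger. I expect this to follow by choosing the weights $w_j$ so that the initial-form map preserves the valuation vectors $(\val_{t_j})$. For instance, weights proportional to $\lcm(c_j)/c_j$ make all conductor levels homogeneous, and initial forms then keep the same value set.

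\textbf{Nodal limits and the main obstacle.} If $\mc O$ were nodal or smooth, then $\delta$ and $\mathfrak c$ would force $\wh{\mc O}_{C,q}$ to be nodal as well, so $\mc O$ is non-nodal. I expect this valuative bookkeeping, namely showing that the conductor does not grow in the limit, to be the main obstacle. The first thing I would try is to reduce to showing that the value semigroup of the initial algebra equals that of $\wh{\mc O}_{C,q}$.

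\textbf{Globalization.} Next, build $\mc C\to\bb A^1$. The open part $(C\setminus q)\times\bb G_m$ is glued to the local family $\mathrm{Spec}$ of the Rees-type algebra $\mc R=\bigoplus_n \lambda^{-n}F_n$, where $F_n$ is the weight filtration on $\wh{\mc O}_{C,q}$. This degeneration is equivalent to a weighted blowup. Concretely:
\begin{enumerate}
\item Take $C^\nu\times\bb A^1$, normalized at $q$, and perform a weighted blowup at the points $(\nu^{-1}(q),0)$ with weights $w_j$. The exceptional curves are rational curves attached nodally at the $\nu^{-1}(q)$.
\item Glue along the subalgebra given by $\mc R$; this pinches the exceptional curves into the projective atom $\ov X(\mc O)$, with its marked points $q_i$ meeting $C^\nu$ nodally.
\item Over $\bb G_m$ the family is isotrivial with fibre $C$, because the $\bb G_m$ action identifies each nonzero fibre with $C$ via $\lambda$-rescaling.
\end{enumerate}

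Flatness follows since $\mc R$ is torsion free over $k[\lambda]$. The special fibre is as claimed in \ref{thm:deg-sing1}. Finally, apply the log-canonical model relative to $\bb A^1$, i.e. $\mathrm{Proj}\bigoplus H^0(m\,\omega^{log}_{\mc C/\bb A^1})$. This contracts the components on which $\omega^{log}$ is not ample (rational tails or bridges of $C^\nu$ at the attachment points, producing the $S$-dangling atom $\ov X(\mc O)^S$) and does not change the general fibre. The $\bb G_m$ action on $\mc C_0$ is trivial on the image of $C^\nu$, since that part comes from a product family. On the atom it is the standard action induced from the grading, which gives \ref{thm:deg-sing2}.
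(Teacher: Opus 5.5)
The architecture is the paper's: a lowest-weight initial degeneration of $\wh{\mc O}_{C,q}$ inside its normalization, then a weighted blow-up of $C^\nu\times\bb A^1$ at $\nu^{-1}(q)\times\{0\}$, pinched along the degenerating conductor scheme. The gap is in the local step, which you rightly flag as the crux. You leave it unproved, and the weights you propose make it false. For $b\geq 2$, the limit $\mc O_o$ can be Gorenstein with multi-conductance $\un c$ only if $w_j(c_j-1)$ is independent of $j$. The reason is that $\mc O_o$ would then be a non-nodal Gorenstein singularity with $\bb G_m$-action. As recalled before \eqref{E:Gm-O}, $\omega_{\mc O_o}$ is then generated by a homogeneous $(K_j\,dt_j/t_j^{c_j})$ with all $K_j\neq 0$, and $dt_j/t_j^{c_j}$ has weight $w_j(1-c_j)$.

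Weights proportional to $\lcm(c_j)/c_j$ violate this as soon as the $c_j$ differ. Concretely, take $D_5$, i.e. $y(x^2+y^3)=0$ with $x=(t_1^3,t_2)$, $y=(-t_1^2,0)$ and $\un c=(4,2)$. With your weights $(1,2)$ one gets $\mathrm{in}(x)=(0,t_2)$. The limit is $k[[t_1^2,t_1^5]]\times_k k[[t_2]]$, which has $\delta=3$ but $\un c=(4,1)$, so it is not Gorenstein and your central fibre would not lie in $\mc U_{g,n}$. The same example shows why the value-semigroup route fails for several branches: an initial form can vanish on some branches, so valuation vectors are not preserved. Also, $\mathfrak c$ is homogeneous for every choice of weights, so normalizing the weights on the conductor constrains nothing.

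The missing idea is to normalize the weights on the dualizing module rather than on the conductor. Take $a_j=l/(c_j-1)$ with $l=\lcm(c_1-1,\dots,c_b-1)$, so that every $dt_j/t_j^{c_j}$ has the same weight. The argument then runs as follows.

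By Lemma \ref{L:Gor}\eqref{L:Gor3}, $\omega_{\wh{\mc O}_{C,q}}$ is generated by $\omega=(u_j\,dt_j/t_j^{c_j})$ with $u_j$ units. Residues only see weight-zero components, so $\Res(\mathrm{in}(g)\,\mathrm{in}(\eta))$ is either $0$ for degree reasons or equal to $\Res(g\eta)=0$. Hence initial forms of Rosenlicht differentials of $\wh{\mc O}_{C,q}$ are Rosenlicht differentials of $\mc O_o$. With these weights, $\mathrm{in}(\omega)=(u_j(0)\,dt_j/t_j^{c_j})$ has pole order exactly $c_j$ on every branch. Lemma \ref{L:omega}\eqref{L:omega1} then gives $\un c(\mc O_o)\geq\un c$. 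The reverse inequality comes from $\mathfrak c\subseteq\mc O_o$. Lemma \ref{L:Gor}\eqref{L:Gor1}, with $\delta$ unchanged, then gives that $\mc O_o$ is Gorenstein. This is the paper's proof of Proposition \ref{P:lim-Ter}, phrased there as a limit in the proper scheme $\Ter_{\un c}$.

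The same weights must be used in the weighted blow-up; they are exactly the standard action \eqref{E:Gm-O} required in \ref{thm:deg-sing2}. With them, your globalization is essentially the paper's push-out construction. Your explicit relative log-canonical contraction is a sensible way to produce the dangling branches $S$.
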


Note that $\mc O$ and $\wh{\mc O}_{C,q}$ have the same numerical invariants, e.g.  the same delta-invariant, the same number of branches and the same genus. Moreover, the $S$-dangling projective atom $\ov X(\mc O)^S$ appearing in the central fiber $\mc C_0$ is obtained by gluing $b$ rational smooth curves (where $b$ is the number of branches of $\mc O$) at the origin in order to produce the singularity $\mc O$ and then attaching the irreducible components corresponding to $S^c$ to the rest of the curve. 
The proof goes like this: we first construct in Corollary \ref{C:lim-O} the curve singularity $\mc O$ with $\bb G_m$-action by taking the limit of $\mc {\wh O}_{C,q}$ in its normalization under the action of a suitable one-parameter subgroup (and we use the theory of territories \cite{BGS} to show that such a limit exists and it is Gorenstein); then, we use a push-out construction to glue the above local isotrivial specialization of $\mc {\wh O}_{C,q}$ into $\mc O$ with a suitable weighted blow-up of $(C,p_i)^{\nu}\times \bb A^1\to \bb A^1$ at $\nu^{-1}(q)\times \{0\}$.

Theorem \ref{T:deg-sing} is the counterpart of a construction of Chen-Yu \cite[Thm. 2.6, Thm. 2.7]{CY} (which we review in Theorem \ref{T:deg-CY} in a more general form), in which one starts from a subcurve $Z$ of $(C,p_i)$ meeting the complementary subcurve $Z^c$ in nodes and a regular differential on $Z$, and one constructs a test configuration for $(C,p_i)$ whose central fiber is obtained by gluing $Z^c$ and a certain S-dangling projective atom $\ov X(\Opp)^S$, whose action is opposite to the one of $\ov X(\mc O)^S$.

Next, in Theorem \ref{T:destab}, we use the above Theorem \ref{thm:deg-sing} and some formulas from \cite{AFS16} and \cite{CY} for the weights of the $\bb G_m$-action on the space of pluricanonical forms on smoothable dangling atoms $\ov X(\mc O)^S$, 
in order to classify all the possible singularities of curves contained in $\mc U(\alpha)$ for any $\alpha \ge 5/9$. Note that Theorem \ref{T:destab} agrees with the predictions in \cite[Table 3]{AFS16}, which provides further evidence for Question \ref{q:alpha-stability}.
From the classification in Theorem \ref{T:destab}, it follows that if $\alpha>9/11$ then an $\alpha$-semistable curve can only have nodal singularity (see Corollary \ref{C:destab9/11}), which concludes the proof of the if implication in Theorem \ref{thm:main}.

More generally, the same techniques allows us to prove that, for $\alpha>2/3-\epsilon$, our stack $\mc U_{g,n}(\alpha)$ is contained in the stack  $\ov{\mc M}_{g,n}(\alpha)$ constructed by Alper-Fedorchuck-Smyth-van der Wyck \cite{AFSV1, AFS2, AFS3}.  

\begin{theorem}[= Theorem \ref{thm:U(alpha)}]
Let  $\alpha \in (2/3-\varepsilon,1]$, where $0 < \varepsilon \ll 1$. Then
$$
\mc U_{g,n}(\alpha) \subset \ov{\mc M}_{g,n}(\alpha).
$$
\end{theorem}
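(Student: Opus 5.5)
To prove $\mc U_{g,n}(\alpha)\subset \ov{\mc M}_{g,n}(\alpha)$ for $\alpha\in(2/3-\varepsilon,1]$, I would take a smoothable $\alpha$-semistable $(C,p_i)$ and check, one by one, the conditions in the definition of $\ov{\mc M}_{g,n}(\alpha)$ (Definition \ref{D:Mgalpha}). These are of two types: \emph{local} conditions on the allowed singularities (nodes, and depending on the interval also $A_2$, $A_3$, $A_4$ cusps/tacnodes, but no $D_4$ etc.), and \emph{global} conditions forbidding certain subcurves. The forbidden subcurves are elliptic tails for $\alpha\le 9/11$, elliptic chains/bridges attached at tacnodes or nodes for $\alpha\le 7/10$, and Weierstrass chains and tails for $\alpha\le 2/3$.

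\textbf{Local conditions.} The classification in Theorem \ref{T:destab}, valid for all $\alpha\ge 5/9$, already restricts the singularities of $\alpha$-semistable curves. I would read off from it that, for each subinterval $(9/11,1]$, $(7/10,9/11]$, $(2/3,7/10]$ and $(2/3-\varepsilon,2/3]$, the singularities permitted are exactly those permitted in $\ov{\mc M}_{g,n}(\alpha)$. The mechanism is Theorem \ref{thm:deg-sing}: for any other singularity, the isotrivial degeneration to the dangling atom $\ov X(\mc O)^S$ gives a test configuration of negative weight. For $\alpha$ slightly below $2/3$, I would use that the weight is linear in $\alpha$ and the thresholds are discrete, so no new singularity appears in the interval $(2/3-\varepsilon,2/3]$.

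\textbf{Global conditions.} Here I would use the Chen--Yu degeneration, Theorem \ref{T:deg-CY}. Take a forbidden subcurve $Z$: an elliptic tail, an elliptic chain, or a Weierstrass chain. It meets $Z^c$ in nodes, or after normalizing in tacnodes; for tacnodal attachments I would first reduce to nodal ones via a partial normalization or a compatible version of the construction. Choose the regular differential on $Z$ to be the one vanishing maximally at the attaching points, which is natural for Weierstrass points. Theorem \ref{T:deg-CY} then gives a test configuration whose central fibre is $Z^c$ glued to $\ov X(\Opp)^S$, with $\Opp$ respectively a cusp, a tacnode or an $A_4$-type singularity. The weight of $K+\psi+\alpha(\delta-\psi)$ on it is computed from the AFS/CY formulas for pluricanonical weights on the dangling atoms. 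Since the action is opposite, the sign is reversed relative to the local computation, so this weight is negative exactly in the range where $\ov{\mc M}_{g,n}(\alpha)$ excludes $Z$: $\alpha<9/11$ for tails, $\alpha<7/10$ for chains, $\alpha<2/3$ for Weierstrass chains. I expect the sign conventions at the critical value of $\alpha$ to need care, because at the threshold itself (e.g.\ $\alpha=9/11$) the weight is zero and the definition of $\ov{\mc M}_{g,n}(\alpha)$ is asymmetric there.

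\textbf{Main obstacle.} I expect the hardest step to be the global one for chains. Elliptic and Weierstrass chains of arbitrary length, whose components are joined at tacnodes, must be handled uniformly. This requires the weight computation for the Chen--Yu central fibre to be additive along the chain and to depend only on its length in a controlled way. I would first attempt induction on the length. Failing that, I would degenerate the whole chain at once, using that its dualizing sheaf is trivial in the appropriate sense, so that a single differential on the union exists.
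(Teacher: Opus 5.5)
Your local step (reading the permitted singularities off Theorem~\ref{T:destab}) matches the paper. So does your use of Theorem~\ref{T:deg-CY} on the product test configuration, but that only handles forbidden subcurves that meet the rest of $C$ in nodes and have no internal non-nodal joins. These are an $A_1$-attached elliptic tail, a single $A_1/A_1$-attached elliptic bridge, and an $A_1$-attached genus-$2$ Weierstrass tail $(D,q)$. Taking $\div(\omega)=0$, $0$ and $2q$ respectively gives the atoms $\ov X(^{\opp}A_2)$, $\ov X(^{\opp}A_3)$, $\ov X(^{\opp}A_4)$, with weights $11\alpha-9$, $10\alpha-7$, $39\alpha-26$.

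Everything else is a genuine gap. This includes $A_3$-attached elliptic tails, $A_1/A_1$-chains of length $r\ge 2$, and Weierstrass chains with elliptic links. It also includes the $A_4$-attached elliptic tails and the $A_1/A_4$- and $A_4/A_4$-attached chains that Definition~\ref{D:Mgalpha} excludes at $\alpha=2/3$; these are missing from your list.

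Neither remedy you suggest works. A partial normalization at a tacnode changes the curve, so it gives no test configuration for $(C,p_i)$. The single-differential fallback also fails. For $r\ge 2$ one has $\deg\omega_Z=4r-4>0$ (degree $2$ on the end links and $4$ on the interior ones), so $\omega_Z$ is not trivial in any useful sense. Chen--Yu applied to the whole chain then produces an opposite atom of genus $2r-1\ge 3$. By Lemma~\ref{l:4} and Remark~\ref{R:alpha-inv}, every smoothable atom of genus $\ge 3$ has $w_\alpha\le 0$ for $\alpha\ge 17/28$. So its contribution $-w_\alpha$ is non-negative in the relevant range and can never destabilize.

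The missing idea is to \emph{compose} the two degenerations. First apply Theorem~\ref{T:deg-sing} at each internal tacnode and at each non-nodal attaching point ($A_3$ or $A_4$). In the central fibre these points become nodally glued $\ov X(A_3)$- or $\ov X(A_4)$-atoms, and $\bb G_m$ acts trivially elsewhere. Each link then becomes a $\bb G_m$-invariant, nodally attached elliptic tail, elliptic bridge or Weierstrass tail with fixed attaching nodes. Next apply Theorem~\ref{T:deg-CY} to this non-product test configuration to replace each link by an opposite atom; this is why that theorem is stated for an arbitrary $\wh{\mc C}$.

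By Corollaries~\ref{C:deg-sing} and~\ref{C:deg-CY}, the weight is the sum of $w_\alpha$ over the new $\mc O$-atoms minus the sum over the opposite atoms. For an $A_1/A_1$-chain this telescopes: $(r-1)w_\alpha(A_3)-r\,w_\alpha(A_3)=10\alpha-7$. That replaces your induction on the length.

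Your sign heuristic also ignores that the new atoms contribute $w_\alpha(A_3)=7-10\alpha\ge 0$ and $w_\alpha(A_4)=26-39\alpha\ge 0$ in the ranges where those singularities occur. The actual totals are:
\begin{itemize}
\item $\alpha-2$ for $A_3$-attached tails;
\item $17-28\alpha$, $19-29\alpha$ and $45-68\alpha$ for $A_4$-attached tails, $A_1/A_4$-chains and $A_4/A_4$-chains;
\item $39\alpha-26$ for Weierstrass chains.
\end{itemize}
The $A_4/A_4$ total is negative only for $\alpha>45/68$. That is where the smallness of $\varepsilon$ is genuinely used in the global step, beyond the discreteness of singularity thresholds you invoke.
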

Indeed, we expect that equality holds in the above Theorem (which is the case for $\alpha\in (9/11,1]$ by Theorem \ref{thm:main}). Moreover, as a byproduct of our proof, we get a precise prediction for the next threeshold value (after $2/3$), see Reamrk \ref{R:threshold}.

\medskip

The if implication in Theorem \ref{thm:main}, i.e. that any stable $n$-pointed nodal curve $(C,p_i)$ is $\alpha$-semistable for $9/11<\alpha \le 1$, is the content of Theorem \ref{T:stab}. The proof goes as follows. Consider a compactified test configuration $\ov f: (\ov{\mc C},\sigma_i) \to \bb P^1$ for $(C,p_i)$. Then the weight  $\wei_T(K+\psi+\alpha(\delta-\psi))$ can be computed in the terms of the self-intersection of the relative log-canonical bundle and the degree of the Hodge bundle (see Lemma \ref{l:weight-test}). The non-negativity follows from an application of a (so-called) slope inequality from \cite{CTV23}. We remark that the slope inequality which is used in the proof requires the general fibre of the test configuration to be at worst nodal (and so it can not be applied directly for lower values of $\alpha$) and the total space to be demi-normal (and so it cannot be applied if the special fiber is not generically reduced, see Lemma \ref{l:singularities-test}).

As another application of the slope inequalities in this context, we show the following stability result for (unpointed) smooth curves.

\begin{theorem} [see the end of Section \ref{S:stability-via-slope}]  \label{T:Xiao} Let $C$ be a smooth curve of genus $g \ge 2$. Then $C$ is $\alpha$-semistable for $\frac{3g+8}{8g+4} \leq \alpha \le 1$.
\end{theorem}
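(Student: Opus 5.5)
Given a test configuration $T$ for the smooth curve $C$, we compactify it to a family $\ov f:\ov{\mc C}\to\bb P^1$ that is trivial with fibre $C$ away from $0$. Lemma \ref{l:weight-test} expresses the weight through two global invariants of this family: the self-intersection $\omega^2:=(\omega_{\ov{\mc C}/\bb P^1})^2$ of the relative dualizing sheaf and the degree $\chi:=\deg \ov f_*\omega_{\ov{\mc C}/\bb P^1}$ of the Hodge bundle. Since $n=0$ there is no $\psi$ term. Using Mumford's relation $\kappa=12\lambda-\delta$ and $K=13\lambda-2\delta$, the weight of $K+\alpha\delta$ is, up to a positive factor and sign conventions, a combination $a(\alpha)\,\omega^2-b(\alpha)\,\chi$ with explicit linear coefficients. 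Non-negativity of the weight is therefore equivalent to a slope-type inequality $\omega^2\ge s(\alpha)\,\chi$, where $s(\alpha)$ decreases as $\alpha$ increases.

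The slope inequality available here is Xiao's. For a relatively minimal fibration whose general fibre is smooth of genus $g$ (non-isotrivial, or at least with $\chi>0$), one has $\omega^2\ge \frac{4(g-1)}{g}\chi$. More generally, we would use the version for fibrations with demi-normal or slc total space from \cite{CTV23}, which has the same constant $4(g-1)/g$ when the general fibre is smooth. Solving $s(\alpha)=4(g-1)/g$ for $\alpha$ should give exactly $\alpha=\frac{3g+8}{8g+4}$, and this is how the threshold in the statement arises. As a check, for $g\to\infty$ the threshold tends to $3/8$, consistent with the limiting slope $4$. For larger $\alpha$, less slope is needed, so the inequality continues to hold up to $\alpha=1$.

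The main obstacle is that Xiao's inequality requires hypotheses on the total space and the central fibre. The central fibre $\mc C_0$ of a test configuration in $\mc U_g$ is Gorenstein and reduced, but it may fail to be nodal, and $\ov{\mc C}$ need not be normal or relatively minimal. We would handle this in three steps.

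1. Use Lemma \ref{l:singularities-test}: since the general fibre is smooth and the central fibre is reduced, $\ov{\mc C}$ is normal with isolated Gorenstein singularities.
2. Pass to the minimal resolution $\tilde{\mc C}$ followed by the relatively minimal model. The general fibre is unchanged, and since the singularities are Gorenstein, $K_{\ov{\mc C}}$ pulls back to $K_{\tilde{\mc C}}+E$ with $E\ge0$. Hence $\omega^2$ can only drop, or stay equal, on the minimal model.
3. Show that $\chi$ can only increase, or stay equal, on the minimal model: the Hodge bundle of the singular model differs from that of the resolution by a subsheaf (rational singularities contribute nothing) of colength $\sum p_g$.

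Both directions therefore favour the inequality on $\ov{\mc C}$, and Xiao's bound for the minimal model implies the bound for $\ov{\mc C}$.

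The degenerate case $\chi=0$, an isotrivial family, should give weight $\ge0$ directly from $\omega^2\ge0$ by nefness. The genuinely delicate point is confirming the signs in step 3, namely that $\ov f_*\omega$ on $\ov{\mc C}$ has degree at least that on the resolution. If this fails, we would instead apply \cite{CTV23}'s inequality directly on the normal Gorenstein surface $\ov{\mc C}$.
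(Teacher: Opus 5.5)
Your weight formula $(2-\alpha)K^2_{\ov{\mc C}/\bb P^1}+(12\alpha-11)\,\chi$ and the derivation of the threshold $\frac{3g+8}{8g+4}$ from the constant $4(g-1)/g$ agree with the paper. However, your main route (steps 2--3, reducing to the relatively minimal smooth model) has a genuine gap.

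Step 3 has the inclusion backwards. For a resolution $\pi\colon\wt{\mc C}\to\ov{\mc C}$ one has $\pi_*\omega_{\wt{\mc C}}\subseteq\omega_{\ov{\mc C}}$ with colength $\sum p_g$. Hence $\deg \ov f_*\omega_{\ov{\mc C}/\bb P^1}=\deg \ov f_*\omega_{\wt{\mc C}/\bb P^1}+\sum p_g$, and blowing down $(-1)$-curves does not change the Hodge bundle. So $\chi$ drops when you pass to the minimal model, just as $K^2$ does. Your closing sentence actually states this correct direction, which is the opposite of what step 3 needs, and so nothing transfers from the minimal model back to $\ov{\mc C}$.

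Worse, the reduction is empty. Away from $0$, $\ov{\mc C}\to\bb P^1$ is isomorphic to $C\times(\bb P^1\setminus\{0\})$. By uniqueness of relatively minimal models in genus $g\geq 2$, the relatively minimal model is therefore $C\times\bb P^1$, where $K^2=\chi=0$ and Xiao's inequality just reads $0\geq 0$. All of $K^2_{\ov{\mc C}/\bb P^1}$, and all of $\chi=\sum p_g$, comes from the non-rational Gorenstein singularities of $\ov{\mc C}$ along $\mc C_0$. For the same reason, $\chi=0$ is not the same as isotriviality: every compactified test configuration of $C$ is isotrivial, yet $\chi>0$, e.g.\ for the Chen--Yu test configurations of Theorem~\ref{T:deg-CY}.

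What you need is a slope inequality valid directly on the singular total space. That is your stated fallback, and it is exactly the paper's proof. $\ov{\mc C}$ is normal by Lemma~\ref{l:singularities-test}. $K_{\ov{\mc C}/\bb P^1}$ is nef by \cite[Cor.~5.3]{CTV23}, as in the proof of Theorem~\ref{T:stab}, and $\ov f_*\omega_{\ov{\mc C}/\bb P^1}$ is nef, so $\chi\geq 0$. Then \cite[Theorem E]{CTV23}, which extends Xiao's bound to non-smooth total spaces and non-nodal fibres, gives $K^2_{\ov{\mc C}/\bb P^1}\geq \frac{4(g-1)}{g}\chi$. Hence $\wei_T(K+\alpha\delta)\geq\frac{\alpha(8g+4)-(3g+8)}{g}\,\chi\geq 0$. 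Steps 2--3 should be discarded rather than repaired.
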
 

The proof of Theorem \ref{T:Xiao} uses the classical slope inequality of Xiao \cite{Xiao} and Cornalba-Harris \cite{CH88} in the general form obtained in \cite[Theorem E]{CTV23}. 
In the following Remark, we make some comments on the above Theorem (and its numerical assumptions).

\begin{remark}
The rational map $\ov M_g\dashrightarrow \ov M_g(\alpha)$ is an isomorphism over $M_g$ if  $\alpha> \frac{3g+8}{8g+4}$, it is regular on $M_g$ and contracts the hyperelliptic locus $H_g$ for $\alpha=\frac{3g+8}{8g+4}$, and it is not defined on $H_g$ for $\alpha< \frac{3g+8}{8g+4}$   (see \cite[Cor. 2]{CLV}). Therefore, it is natural to expect that if $\alpha< \frac{3g+8}{8g+4}$ then hyperelliptic curves should not be $\alpha$-semistable, which would imply that Theorem \eqref{T:Xiao} is sharp.
\end{remark}

\subsection*{Acknowledgment}
We are deeply grateful to G. Codogni and J. Alper for their insightful and stimulating discussions on the topic of this paper. We thank D. Chen  and D. Gori for some useful comments on an early draft of the manuscript. 

This project was initiated during the workshop "Developments in moduli problems"  at the American Institute of Mathematics in San Jose in 2023, organized by Jarod Alper, Daniel Halpern-Leistner, Yuchen Liu, and FV. We thank AIM for its hospitality, the organizers and the participants of the workshop for many stimulating discussions.
 
 We thank L. Battistella, F. Carocci, X. Wang, and  J. Zhao for useful conversations.

FV is funded by the MUR  ``Excellence Department Project'' MATH@TOV, awarded to the Department of Mathematics, University of Rome Tor Vergata, CUP E83C18000100006, by the  PRIN 2022 ``Moduli Spaces and Birational Geometry''  funded by MUR,  and he is a member of  the GNSAGA section of INdAM.

LT is a member of  the GNSAGA section of INdAM.

\section{Preliminaries}	
	
We work over an algebraically closed field $k$.  For an introduction to the theory of stacks, with a particular focus on the moduli space of curves, we refer the reader to \cite{Alper}.

\subsection{Test configurations}\label{Sub:test}
Let $X$ be a projective scheme over $k$ and $L$ a $\bb Q$-line bundle on $X$. 

\begin{definition} A test configuration $\mc X$ for $X$ consists of the following data:
\begin{enumerate}
	\item a flat proper morphism of schemes $f: \mc X \to \bb A^1$;
	\item a $\bb G_m$-action on $\mc X$ lifting the standard action on $\bb A^1$;
	\item an isomorphism $\mc X_1\cong  X$.
\end{enumerate}	

A test configuration $(\mc X, \mc L)$ for $(X,L)$ is a test configuration $\mc X$ for $X$ together with  a $\bb G_m$-linearised $\bb Q$-line bundle $\mc L$ on $\mc X$ and an isomorphism $(\mc X_1, \mc L_1) \cong (X, L)$. 
\end{definition}

Given a test configuration $f: \mc X\to \bb A^1$ we can consider its compactification $\ov f: \ov{\mc X} \to \bb P^1$ obtained gluing it with a trivial family at infinity. 

\begin{lemma} \label{l:singularities-test}
Let $ \mc X$ be a test configuration for $X$. 
\begin{enumerate}
    \item \label{l:singularities-test1} $\mc X$ is $S_2$ if and only if $X$ is $S_2$ and $\mc X_0$ is $S_1$.
    \item \label{l:singularities-test2} If $X$ is smooth (resp. nodal) in codimension one and $\mc X_0$ is generically reduced then $\mc X$ is smooth (resp. nodal) in codimension one. 
\end{enumerate}
In particular, if $\mc X_0$ is reduced and $X$ is normal (resp. demi-normal) then $\mc X$ is normal (resp. demi-normal).
\end{lemma}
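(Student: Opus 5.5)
The plan is to use the $\bb G_m$-action to split $\mc X$ into two pieces: the open set $\mc X\setminus \mc X_0$, which is $\bb G_m$-equivariantly isomorphic to $X\times \bb G_m$, and the central fibre $\mc X_0$, which is a Cartier divisor cut out by the non-zero-divisor $t$ (non-zero-divisor by flatness of $f$). Every local property in question is then checked separately at points of $\mc X\setminus\mc X_0$, where it reduces to the corresponding property of $X$ (smoothness over $\bb G_m$ preserves $S_2$, normality, and being smooth or nodal in codimension one), and at points of $\mc X_0$, where we use $t$.

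For \eqref{l:singularities-test1}, take $x\in \mc X_0$ and use the standard depth fact for a local ring $A=\mc O_{\mc X,x}$ with non-zero-divisor $t$: $\operatorname{depth} A/tA=\operatorname{depth} A-1$. Since $\dim A/tA=\dim A-1$, the condition $\operatorname{depth} A\ge \min(2,\dim A)$ is equivalent to $\operatorname{depth} A/tA\ge\min(1,\dim A/tA)$, i.e. $S_2$ for $\mc X$ at $x$ is equivalent to $S_1$ for $\mc X_0$ at $x$. This pointwise equivalence breaks down only when $\dim A=1$: then $x$ is a closed point of $\mc X_0$ that is an irreducible component of $\mc X_0$, and there depth $\geq 1$ is automatic from $t$ being a non-zero-divisor. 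Combining this with the trivial family away from $0$ gives the ``if'' direction. For ``only if'', $S_2$ of $\mc X$ restricts to $S_2$ of $X\cong \mc X_1$ because $\mc X_1$ is a Cartier divisor in the smooth-over-$\bb G_m$ part, and $S_1$ of $\mc X_0$ follows from the depth identity. The only care needed is bookkeeping of non-closed points; I would work with all points of $\mc X$ and use that $\bb G_m$-equivariance forces every point of $\mc X_0$ to specialize to a closed point. This is where I expect the main (mild) obstacle: making sure the depth/dimension comparison is valid at every point of $\mc X_0$, including generic points of components.

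For \eqref{l:singularities-test2}, the codimension-one points of $\mc X$ are of two kinds. Those not in $\mc X_0$ correspond, via $X\times\bb G_m$, to codimension-$\le 1$ points of $X$, so $\mc X$ is smooth (resp. nodal) there by hypothesis. Those in $\mc X_0$ are generic points $\eta$ of components of $\mc X_0$ (by flatness all components of $\mc X_0$ have codimension one). At such $\eta$, generic reducedness makes $\mc O_{\mc X_0,\eta}=\mc O_{\mc X,\eta}/t$ a field. Hence $\mc O_{\mc X,\eta}$ is a one-dimensional local ring whose maximal ideal is generated by $t$, i.e. a DVR, so $\mc X$ is regular there, which is stronger than nodal. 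Finally, the last assertion follows by Serre's criterion: normal $=R_1+S_2$, and demi-normal $=S_2$ plus nodal in codimension one. Reducedness of $\mc X_0$ gives both $S_1$ and generic reducedness, so \eqref{l:singularities-test1} and \eqref{l:singularities-test2} apply.
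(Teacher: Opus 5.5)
Your proof is correct and is essentially the paper's. Part (2) uses the same dichotomy: codimension-one points off $\mc X_0$ come from $X\times\bb G_m$, and at a generic point of $\mc X_0$ the maximal ideal is $(t)$, so the local ring is a DVR. For part (1), your identity $\operatorname{depth}A/tA=\operatorname{depth}A-1$ is the standard argument behind \cite[Prop.~2.6(ii)]{BHJ17}, which the paper cites rather than proves.

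One justification needs fixing: $S_2$ of $X$ does not follow from $\mc X_1$ being a Cartier divisor, since cutting by a Cartier divisor only yields $S_1$. It follows instead from descent of $S_2$ along the smooth surjection $X\times\bb G_m\to X$ (look at the generic point of $\{x\}\times\bb G_m$), as your first paragraph already indicates. Also, the depth identity holds at every point of $\mc X_0$, and when $\dim A=1$ both conditions are automatic, so nothing breaks down and no $\bb G_m$-specialization argument is needed.
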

\begin{proof}
	Part \eqref{l:singularities-test1} is proved in \cite[Prop. 2.6(ii)]{BHJ17}.

    Part \eqref{l:singularities-test2} in the smooth case is proved in \cite[Prop. 2.6(iii)]{BHJ17}. The nodal case is treated in a similar way: let $\eta$ be a codimension one point of $\mc X$. Then there are two possibilities: either $\eta$ is a codimension one point of $\mc X\setminus \mc X_0$, in which case we conclude using that $\mc X\setminus \mc X_0\cong X\times (\bb A^1\setminus\{0\})$ is nodal in codimension one; or $\eta$ is codimension zero (i.e. a generic) point of $\mc X_0$ in which case we deduce that $\mc O_{\mc X,\eta}$ is regular since $\mc X_0$ is a Cartier divisor of $\mc X$ and $\mc O_{\mc X_0,\eta}\cong k$ is regular by assumption. 
\end{proof}	
	
\subsection{Stacks of curves}\label{Sub:stack-curves}

Let $\mc U_{g,n}$ be the algebraic stack of Gorenstein reduced connected projective curves $C$ over $k$ of arithmetic genus $g$ endowed with $n$-marked disjoint points $\{p_1,\ldots, p_n\}$ with $\omega_C^{log}:=\omega_C(\sum p_i)$ ample. We call such a curve $(C,p_i)=(C,\{p_1,\ldots, p_n\})$ a \emph{$G$-stable} curve of type $(g,n)$. 

\begin{proposition}\footnote{This result was suggested to us by D. Gori.}\label{P:Ugn-finite}
    The stack $\mc U_{g,n}$ is of finite type over $k$.
\end{proposition}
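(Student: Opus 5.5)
The plan is to embed every object of $\mc U_{g,n}$ uniformly into a fixed projective space via a fixed power of the log-canonical bundle. Then $\mc U_{g,n}$ becomes a quotient of a locally closed subscheme of a Hilbert scheme by $PGL$, which is of finite type. The key input is a uniform $m$ (depending only on $g,n$) such that for every $G$-stable $(C,p_i)$ the line bundle $(\omega_C^{log})^{\otimes m}$ is very ample and has vanishing $H^1$.

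\textbf{Step 1: uniform boundedness of components.} Since $\omega_C^{log}$ is ample and $\deg \omega_C^{log} = 2g-2+n$, every irreducible component has positive log-canonical degree. So $C$ has at most $2g-2+n$ components. The degree of $\omega_C^{log}$ restricted to a component $Z$ equals $2p_a(Z)-2 + |Z\cap Z^c|_{\text{length}} + \#\{p_i\in Z\}$, where the middle term is the degree of the conductor-type correction for $\omega_C|_Z$ versus $\omega_Z$. On a Gorenstein curve, $\omega_C|_Z = \omega_Z(D_Z)$ with $D_Z$ an effective Cartier-type divisor supported on $Z\cap Z^c$ of positive degree when $Z\neq C$ (Catanese's formula). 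Since each summand is bounded and each $\deg_Z \omega^{log} \geq 1$, these degrees are all at most $2g-2+n$.

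\textbf{Step 2: uniform very ampleness.} I would use the criterion that on a Gorenstein (reduced, connected) curve, a line bundle $L$ with $\deg_Z L \geq 2p_a(Z)+1 + $ (correction) on every subcurve is very ample and satisfies $H^1=0$. Concretely, I would use the Catanese–Franciosi–Hulek–Reid style result: if $\deg_Z L > \deg_Z \omega_C + 2$, or more precisely $\deg_Z(L-\omega_C)\geq 3$, for every subcurve $Z\subseteq C$, then $L$ is very ample and $H^1(L)=0$ by Serre duality. Taking $L=(\omega_C^{log})^{m}$, we get $\deg_Z(L-\omega_C)=(m-1)\deg_Z\omega_C^{log}+\deg_Z(\sum p_i)\geq m-1$. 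Hence $m=4$, or any $m\geq 4$, works uniformly. This is the main point to check carefully: the subcurve-degree criterion for very ampleness for Gorenstein, possibly non-planar, singular curves. I expect that to be the main obstacle; I would cite Catanese–Franciosi–Hulek–Reid ``Embeddings of curves and surfaces'' for it.

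\textbf{Step 3: Hilbert scheme.} With $N+1 = h^0(L) = m(2g-2+n)-g+1$, which is constant by Riemann–Roch, each $(C,p_i)$ embeds in $\bb P^N$ with fixed Hilbert polynomial. Consider the Hilbert scheme of such curves together with $n$ sections, namely a closed subscheme of a product of Hilbert scheme and $(\bb P^N)^n$. Inside it, take the locus $H$ where the conditions hold: the fibre is reduced, connected and Gorenstein; the points are distinct and smooth; the embedding is by the complete linear system; and $\mc O(1)\cong(\omega^{log})^{m}$. Each condition is open or locally closed by standard results: openness of the Gorenstein and reduced loci for flat proper families, upper semicontinuity, and local closedness of the locus where two line bundles are isomorphic. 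Thus $H$ is of finite type, and $\mc U_{g,n}\cong[H/PGL_{N+1}]$, which is of finite type.
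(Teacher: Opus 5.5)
Your proposal is correct and follows the same overall strategy as the paper. You show that a fixed power $(\omega_C^{log})^{\otimes m}$ has vanishing $H^1$ and is very ample on every $G$-stable curve of type $(g,n)$, so all objects sit in one Hilbert scheme with fixed Hilbert polynomial. Your $H^1$-vanishing argument (duality plus negative degree on every component) is exactly the paper's.

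The genuine difference is how very ampleness is obtained.
\begin{itemize}
\item The paper adapts the Knudsen-style argument for curves with double points. It first proves global generation of $(\omega_C^{log})^{\otimes k}$ for $k\geq 2$, via a vanishing on the partial normalization at each point. It then gets normal generation, hence very ampleness, for $k\geq 4$ from the generalized Castelnuovo lemma. Local finite type and the Hilbert-scheme bookkeeping are delegated to the Stacks Project (Tags 0DSS, 0E9B).
\item You instead quote the Catanese--Franciosi--Hulek--Reid numerical criterion as a black box. You also write out the presentation $[H/PGL_{N+1}]$ yourself, which gives local finite type and quasi-compactness at once.
\end{itemize}
Your route is shorter and cleaner. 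The paper's is more self-contained and proves the stronger statement of projective normality.

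Two small remarks.

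First, the CFHR hypothesis is really $\deg_B L\geq 2p_a(B)+1$ for every subcurve $B$. Your condition $\deg_B(L-\omega_C)\geq 3$ implies it because $\deg_B\omega_C\geq 2p_a(B)-2$. Your Step 1 justifies this through $\omega_C|_Z=\omega_Z(D_Z)$ with $D_Z$ Cartier, but that is only literally true when $Z$ is Gorenstein. For example, three of the four lines of an elliptic $4$-fold point form a non-Gorenstein subcurve. The inequality still holds, because $\omega_Z=\mathcal{H}om(\mc O_Z,\omega_C)$ injects into $\omega_C|_Z$ with finite-length cokernel.

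Second, applying the CFHR bound directly, rather than through your reformulation, already handles $m=3$. From $\deg_Z\omega_C^{log}\geq\max\{1,2p_a(Z)-2\}$ one gets $3\deg_Z\omega_C^{log}\geq 2p_a(Z)+1$ for every connected subcurve $Z$. The disconnected case follows by additivity. So your approach recovers, at no extra cost, the sharper bound $m=3$ mentioned in the paper's footnote.
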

\begin{proof}
Since the stack $\mc U_{g,n}$ is locally of finite type by \cite[\href{https://stacks.math.columbia.edu/tag/0DSS}{Tag 0DSS}]{stacks-project}, the statement reduces to show that $\mc U_{g,n}$ is quasi-compact. This can be shown as in  \cite[\href{https://stacks.math.columbia.edu/tag/0E9B}{Tag 0E9B}]{stacks-project} using that if $(C,p_i)$ is G-stable of type $(g,n)$, then we have that
\begin{enumerate}[(i)]
\item \label{prop1} $H^1(C,(\omega_C^{log})^{\otimes k})=0$ for any $k\geq 2$.
\item \label{prop2} $(\omega_C^{log})^{\otimes k}$ is very ample for any $k\geq 4$\footnote{With some extra work, it is possible to lower this to $3$, as in \cite[Thm. B]{Cat} and \cite[Thm. 1.8]{Knu2}, which is then sharp as shown in \cite[Thm. C]{Cat}. However, we will not prove this sharp result, since we will not need it.}.
\end{enumerate}
These properties are proved for unmarked G-stable curves (i.e. $n=0$) in \cite[Thm. B]{Cat} and for G-stable $n$-pointed curves with only double point singularities in \cite[Prop. 1.7]{Per1} (extending the proof of \cite[Thm. 1.8]{Knu2} for nodal singularities). 

We will see how to adapt the proof of \cite[Prop. 1.7]{Per1} to our general case. 

Part \ref{prop1}: by duality we have
$$
H^1(C,(\omega^{log})^{\otimes k})=H^0\left(C, (\omega^{log})^{\otimes (1-k)k}(-\sum_i p_i)\right)^\vee,
$$
and the last group is zero for every $k\geq 2$ since $(\omega^{log})^{\otimes (1-k)}(-\sum_i p_i)$ has negative degree on each irreducible component of $C$, because $\omega_C^{\log}$ is ample. 

In order to prove part \ref{prop2}, it is enough to show, using that $\omega_C^{log}$ is ample, that $(\omega_C^{log})^{\otimes k}$ is normally generated for any $k\geq 4$. This last property can be deduced, arguing as at the end of \cite[Prop. 1.7]{Per1}, from the Generalized Lemma of Castelnuovo and the following property:

$(\dagger)$: $(\omega_C^{log})^{\otimes k}$ is globally generated for any $k\geq 2$.

Indeed, arguing as in the proof of \cite[Thm. 1.7]{Per1}, for any $p\in C$, 
the surjectivity of the evaluation map
$$
H^0(C,(\omega_C^{log})^{\otimes k})\xrightarrow{\text{ev}_p} H^0(p,(\omega_C^{log})^{\otimes k}_{|p})
$$
follows from the vanishing 
\begin{equation}\label{E:van-pull}
 H^0\left(\wt C, \pi^*\left((\omega_C^{log})^{\otimes (1-k)}(-\sum_i p_i) \right)\right)=0,  
\end{equation}
where $\pi:\wt C\to C$ is the partial normalization of $C$ at $p\in C$. Now the vanishing \eqref{E:van-pull} follows from the fact that the line bundle $\pi^*\left((\omega_C^{log})^{\otimes (1-k)}(-\sum_i p_i) \right)$ has negative degree on each irreducible component of $\wt C$ because $\omega_C^{log}$ is ample. 
\end{proof}

The stack $\mc U_{g,n}$ contains, as an open substack, the integral DM stack $\ov{\mc M}_{g,n}$ of stable (=G-stable and nodal) $n$-pointed  curves of genus $g$ and we denote by $\mc U^{sm}_{g,n}$ the closure of $\ov{\mc M}_{g,n}$ in $\mc U_{g,n}$, endowed with reduced structure. Note that $\mc U^{sm}_{g,n}$ is the irreducible component of $\mc U_{g,n}$ consisting of smoothable curves, i.e. curves in $\mc U_{g,n}$ that are limit of $n$-pointed smooth curves.  


We let $\pi:\mc C_{g,n}\to \mc  U_{g,n}$ to be the universal curve which comes endowed with $n$ disjoint sections $\{\sigma_i\}_{i=1}^n$ whose images $\{D_i:=\sigma_i(\mc U_{g,n})\}_{i=1}^n$ are disjoint Cartier divisors on $\mc C_{g,n}$.  Denote by $\omega_\pi$ the relative dualizing sheaf of $\pi$ and by $\wh{\omega}_{\pi}:=\omega_{\pi}(\sum_{i=1}^n D_i)$ the relative logartithmic dualizing sheaf of $\pi$ (both of them are line bundles by our assumption on $\mc U_{g,n}$). Consider the following line bundles on $\mc U_{g.n}$:
\begin{equation}\label{E:lamba}
\begin{aligned}
& \wh \lambda_n:=\det R\pi_*(\wh\omega_\pi^{\otimes n}) \text{ for any } n\geq 0, \\
& \lambda_n:=\det R\pi_*(\omega_\pi^{\otimes n}) \text{ for any } n\geq 0, \\
& \psi_i:=\sigma_i^*(\omega_{\pi}) \text{ for any } 1\leq i \leq n.
\end{aligned}
\end{equation}
The Knudsen-Mumford formula, together with relative duality for $\pi$, implies that (in additive notation)  
\begin{equation}\label{E:Mum-for}
\begin{aligned}
& \wh \lambda_n=\binom{n}{2} \lambda_{CM}+ \lambda,  \\
& \lambda_n=\binom{n}{2} (\lambda_{CM}-\psi)+ \lambda,  \\
\end{aligned}
\end{equation}
where $\lambda_{CM}$ (called the Chow-Mumford line bundle) and $\lambda$ (called the Hodge line bundle) are given by 
\begin{equation}\label{E:CM-Hod}
  \lambda_{CM}=\kappa_1:=\langle \wh \omega_{\pi}, \wh \omega_{\pi}\rangle_{\pi} \text{ and } \lambda:=\det R\pi_*(\mc O_\pi),
\end{equation}
where $\langle -,- \rangle_{\pi}$ is the Deligne pairing for the family $\pi$ and $\psi:=\sum_{i=1}^n \psi_i$. Using the above line bundles, we can now extend  the canonical divisor class $K$ and the total boundary class  $\delta$ from $\ov{\mc M}_{g,n}$ to $\mc U_{g,n}$ by the formulas (see \cite[Chap. 13, Sec. 7]{GAC2})
\begin{equation}\label{E:K-delta}
  \delta-\psi:=12\lambda-\lambda_{CM} \text{ and } K+\psi:=2\lambda_{CM}-11\lambda.
\end{equation}

In what follows, we will often consider the following $\bb Q$-divisor class (for $\alpha\in \bb Q\cap [0,1]$)
\begin{equation}\label{E:Kdelta}
K+\psi+\alpha(\delta-\psi)=(2-\alpha)\lambda_{CM}+(12\alpha-11)\lambda=(2-\alpha)\wh \lambda_2+(13\alpha-13)\lambda.
\end{equation}

\subsection{$\Theta$-stability for line bundles}
\label{Sub:Theta-stability}

Let $\mc U$ be an algebraic stack locally of finite type over $k$ with affine automorphism groups and let $\Theta$ be the quotient stack $[\bb A^1 / \bb G_m]$. 
A line bundle $\mc L$ on $\mc U$ defines a numerical invariant 
\begin{equation}\label{E:mu-L}
   \begin{aligned}
       \wei_{-}(\mc L): \Map(\Theta,\mc U)& \longrightarrow \Pic B\bb G_m  \xrightarrow[\cong]{\wei} \bb Z\\
       T & \mapsto \mc L_{T(0)}  \mapsto \wei(\mc L_{T(0)}),
   \end{aligned} 
\end{equation} 
where last isomorphism $\wei$ is normalized according to the following convention: $\wei^{-1}(1)$ is the one dimensional vector space $\langle e \rangle$ linearized by the action $\lambda \cdot e:=\lambda^{-1}e$ (see also \cite[Sec. 3.3]{AHL} for a discussion of this sign convention). Note that $\wei_T(\mc L_1\otimes \mc L_2)=\wei_T(\mc L_1)+\wei_T(\mc L_2)$ for any two line bundles $\mc L_1$ and $\mc L_2$ on $\mc U$. 
Following \cite{HL-structure} and \cite{Hei}, we say that $p\in \mc U(k)$ is \emph{$\Theta$-semistable} with respect to a line bundle $\mc L$ if for any map $T: \Theta \to \mc U$ with $T(1) \cong p$ we have $\wei_T(\mc L) \ge 0$. 

We now specialize the above definition to our setting.

\begin{definition}\label{D:Ugalpha}
Let $\alpha \in \mathbb Q \cap [0,1]$. 
\begin{enumerate}
\item We say that a curve $C \in \mc U_{g,n}(k)$ is $\alpha$-semistable if it is $\Theta$-semistable for the $\bb Q$-line bundle $K+\psi+\alpha (\delta-\psi)$.
\item We denote by $\mc U_{g,n}(\alpha)$ the locus of curves $C\in \mc U_{g,n}$ that are $\alpha$-semistable and smoothable. 
\end{enumerate}
\end{definition}

Note that a map $T:\Theta\to \mc U_{g,n}$ such that $T(1)=(C,p_i)\in \mc U_{g,n}(k)$ is equivalent to a $n$-pointed test configuration $f:(\mc C,\sigma_i)\to \bb A^1$ for $(C,p_i)$ such that $\mc C_0\in \mc U_{g,n}(k)$. In particular, we have that $\omega_{\mc C/\bb A^1}(\sum_ i \sigma_i)$ is relatively ample.

\begin{lemma} \label{l:weight-test}
Consider a map $T: \Theta \to \mc U_g(k)$ such that $T(1)=(C,p_i)$ with induced test configuration $f:(\mc C, \sigma_i)\to \bb A^1$ and consider the canonical compactification $\ov f: (\ov{\mc C},\sigma_i) \to \bb P^1$ which is trivial at infinity.
Consider the following line bundles or divisor classes on $\bb P^1$:
$$\begin{aligned}
& \lambda_q(\ov f):=\det \ov f_*\mathcal{O}_{\ov{\mc C}}(qK_{\ov{\mc C}/\bb P^1}), \\
& \wh{\lambda}_q(\ov f):=\det \ov f_*\mathcal{O}_{\ov{\mc C}}(q(K_{\ov{\mc C}/\bb P^1}+\sum_i \sigma_i)), \\
& \lambda_{CM}(\ov f)=\ov f_*((K_{\ov{\mc X}/\bb P^1}+\sum_i \sigma_i)^2),
\end{aligned}
$$
all of which have an induced canonical $\bb G_m$-action.  We have that
$$
\begin{aligned}
& \wei_T(\lambda_q)= \deg \lambda_q(\ov f), \\
& \wei_T(\wh{\lambda}_q)= \deg \wh{\lambda}_q(\ov f), \\
& \wei_T(\lambda_{CM})= \deg \lambda_{CM}(\ov f)=	(K_{\ov{\mc C} / \bb P^1}+\sum_i \sigma_i)^2.
\end{aligned}
$$	
\end{lemma}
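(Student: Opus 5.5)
The plan is to reduce all three identities to one general principle. For a $\bb G_m$-linearised line bundle $M$ on $\bb A^1$, let $\ov M$ be its extension to $\bb P^1$ that is trivialised at $\infty$ by the canonical trivial family. Then $\deg \ov M$ equals the weight of $\bb G_m$ on the fibre $M_0$, with the sign convention fixed in \eqref{E:mu-L}. To see this, write $M\cong \mc O_{\bb A^1}\cdot e$, where $e$ spans $M_0$ and $\bb G_m$ acts on $e$ with some character $t^{-w}$. The invariant section over $\bb G_m=\bb A^1\setminus\{0\}$ that gives the gluing at infinity is then $t^{w}e$ (or $t^{-w}e$, depending on the convention). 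Computing its order of vanishing at $0$ and at $\infty$ yields $\deg \ov M=w$. With the convention $\lambda\cdot e=\lambda^{-1}e$ corresponding to weight $1$, the signs should match. Checking this on $\mc O_{\bb P^1}(1)$ with its standard linearisation is the main bookkeeping step, and I expect the only real care to be needed there.

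Next I will apply base change. The line bundles $\lambda_q$, $\wh\lambda_q$ and $\lambda_{CM}$ on $\mc U_{g,n}$ are defined functorially: as determinants of $R\pi_*$ of powers of $\omega_\pi$ or $\wh\omega_\pi$, and as a Deligne pairing. Their pullback along $T:\Theta\to\mc U_{g,n}$ is therefore the corresponding $\bb G_m$-linearised bundle on $\bb A^1$ built from $f:(\mc C,\sigma_i)\to\bb A^1$. Here I use the following facts: the relative dualizing sheaf commutes with base change for flat Gorenstein families; determinants of cohomology commute with arbitrary base change; and the Deligne pairing is functorial. The same constructions applied to $\ov f$ give line bundles on $\bb P^1$ that restrict to these over $\bb A^1$. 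Over $\bb P^1\setminus\{0\}$ they agree with the pullback from the trivial family $C\times(\bb P^1\setminus 0)$, so they are canonically trivialised near $\infty$, compatibly with the gluing.

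By the vanishing \ref{prop1} in the proof of Proposition \ref{P:Ugn-finite}, for $q\ge2$ the higher direct images vanish. Hence $\det R\ov f_*=\det \ov f_*$, which matches the notation in the statement. For $q=0,1$ one should interpret the notation as $\det R\ov f_*$, and I will remark on this. The first principle then gives $\wei_T(\lambda_q)=\deg\lambda_q(\ov f)$ and $\wei_T(\wh\lambda_q)=\deg\wh\lambda_q(\ov f)$.

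For $\lambda_{CM}$, the Deligne pairing $\langle \wh\omega,\wh\omega\rangle_{\ov f}$ is a line bundle on $\bb P^1$ whose degree is the intersection number $(K_{\ov{\mc C}/\bb P^1}+\sum\sigma_i)^2$. This is the standard identity $\deg\langle L,M\rangle=L\cdot M$ for a family of curves over a smooth projective curve, and it holds because $\ov{\mc C}$ is a flat projective family with line bundles $L,M$. Alternatively, the Knudsen–Mumford expansion \eqref{E:Mum-for} together with Riemann–Roch on $\ov{\mc C}$ gives the same result. Combined with the weight–degree principle, this yields the last equality.
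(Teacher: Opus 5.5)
Your proposal is correct and follows essentially the same route as the paper: the weight at $0$ of a $\bb G_m$-linearised determinant line bundle, trivialised at $\infty$, equals its degree on $\bb P^1$, and $\deg\langle \wh\omega,\wh\omega\rangle_{\ov f}=(K_{\ov{\mc C}/\bb P^1}+\sum_i\sigma_i)^2$; your base-change and sign-convention bookkeeping only makes explicit what the paper leaves implicit. One small correction: the vanishing in the proof of Proposition \ref{P:Ugn-finite} concerns powers of $\omega^{\log}$, so it identifies $\det R\ov f_*$ with $\det\ov f_*$ only for $\wh\lambda_q$ (and for $\lambda_q$ when $n=0$). For the non-log $\lambda_q$ with marked points, $R^1$ can be nonzero (e.g.\ a rational tail carrying two marked points), so there you should keep $\det R\ov f_*$, which your weight--degree argument handles anyway.
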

\begin{proof}
The first two equalities follow from the fact that if $E$ is a  vector bundle $E$ on $\mathbb{P}^1$ with a $\bb G_m$-action and with trivial weight at $\infty$, we have $\wei(\det(E))=\deg E$.
The last equality follows from the previous fact together with  the fact that 
$$
\deg(\langle \omega_{\ov f}(\sum_i \sigma_i),\omega_{\ov f}(\sum_i \sigma_i)\rangle_{\ov f})= \deg \ov f_*((K_{\ov{\mc X}/\bb P^1}+\sum_i \sigma_i)^2)= (K_{\ov{\mc C} / \bb P^1}+\sum_i \sigma_i)^2.
$$

\end{proof}

\section{Stability via slope inequalities}\label{S:stability-via-slope}

 The aim of this section is to apply slope inequalities to prove the following

\begin{theorem}\label{T:stab}
Let $(C, p_i)$ be a stable $n$-pointed nodal curve of genus $g$. Then $(C,p_i)$ is $\alpha$-semistable for $9/11\leq \alpha \le 1$.
\end{theorem}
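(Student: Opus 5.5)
We need to show that for any test configuration $T$ of $(C,p_i)$, the weight of $K+\psi+\alpha(\delta-\psi)=(2-\alpha)\lambda_{CM}+(12\alpha-11)\lambda$ is nonnegative for $9/11\le\alpha\le 1$. By Lemma \ref{l:weight-test}, writing $\ov f:(\ov{\mc C},\sigma_i)\to\bb P^1$ for the compactified test configuration, this weight is
$$
(2-\alpha)\,(K_{\ov{\mc C}/\bb P^1}+\textstyle\sum_i\sigma_i)^2+(12\alpha-11)\deg\lambda(\ov f),
$$
where $\lambda(\ov f)=\det \ov f_*\omega_{\ov f}$ up to sign, via duality and the Mumford relation $\lambda_1=\lambda$. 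The expression is affine in $\alpha$, so it suffices to check the two endpoints $\alpha=1$ and $\alpha=9/11$. These give, respectively,
$$
\lambda_{CM}(\ov f)+\deg\lambda(\ov f)\ge 0
\quad\text{and}\quad
\tfrac{13}{11}\lambda_{CM}(\ov f)-\tfrac{13}{11}\deg\lambda(\ov f)\ge 0 .
$$
The second is the slope-type inequality $(K+\sum\sigma_i)^2\ge \deg\lambda$.

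The first step is to reduce to the case in which the slope inequalities of \cite{CTV23} apply, namely when $\ov{\mc C}$ is demi-normal. The generic fibre $C$ is nodal, hence demi-normal. If $\mc C_0$ is reduced, Lemma \ref{l:singularities-test} shows that $\mc C$, and therefore $\ov{\mc C}$, is demi-normal. The central fibre is required to lie in $\mc U_{g,n}$, so it is a reduced Gorenstein curve. Hence this is automatic, and moreover $K_{\ov{\mc C}/\bb P^1}+\sum\sigma_i$ is Cartier and $\ov f$-ample. The general fibre is a stable pointed nodal curve. The special fibre is Gorenstein, but it can have arbitrary singularities.

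The second step is to invoke the slope inequality from \cite{CTV23}. For a fibration over a curve with demi-normal total space, nodal stable general fibre and relatively ample log-canonical divisor, it gives $(K_{\ov f}+\sum\sigma_i)^2\ge c\,\deg\lambda(\ov f)$ with a constant $c$. What we need is $c\ge 1$ for the nef/ample log-canonical case, together with $\deg\lambda(\ov f)\ge 0$, i.e. nefness of the Hodge bundle for families with demi-normal total space. I would take $c$ to be the weak constant, such as $4(g-1)/g\ge 1$ or simply the inequality $\lambda_{CM}\ge \lambda$ valid in this generality. Then $\alpha=9/11$ follows from $\lambda_{CM}\ge\lambda$, and $\alpha=1$ follows from $\lambda_{CM}\ge0$ and $\lambda\ge 0$. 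The positivity $\lambda_{CM}\ge0$ holds because $K+\sum\sigma_i$ is relatively ample, the fibration is trivial near $\infty$, and hence the divisor is nef.

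The main obstacle is making sure the precise hypotheses of the \cite{CTV23} slope inequality hold. This includes the trivial compactification at $\infty$, the marked sections, and the fact that the central fibre may have non-nodal Gorenstein singularities. It also includes checking that the constant it provides is at least $1$, since that is exactly what pins the threshold at $9/11$, where the coefficient $12\alpha-11$ equals $-(2-\alpha)$. I would first verify nefness of $K_{\ov f}+\sum\sigma_i$. Then I would apply \cite[Theorem]{CTV23} in its log, demi-normal form to get $\lambda_{CM}\ge\lambda$.
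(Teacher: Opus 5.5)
\textbf{Overall.} Your outline follows the paper's proof. It uses the same weight formula from Lemma \ref{l:weight-test} (with no sign ambiguity, since $\lambda=\wh\lambda_1$ by \eqref{E:Mum-for}). It gets demi-normality of $\ov{\mc C}$ from Lemma \ref{l:singularities-test}, using that $\mc C_0\in\mc U_{g,n}$ is reduced. It uses the slope inequality $(K_{\ov{\mc C}/\bb P^1}+\Delta)^2\ge\deg\ov f_*\mathcal O_{\ov{\mc C}}(K_{\ov{\mc C}/\bb P^1}+\Delta)$, which is \cite[Thm. 5.6(3)]{CTV23} with $q=1$, together with nefness of the Hodge bundle. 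Checking the endpoints $\alpha=9/11$ and $\alpha=1$ is the same computation as the paper's bound $\wei_T(K+\psi+\alpha(\delta-\psi))\ge(11\alpha-9)\deg\ov f_*\mathcal O_{\ov{\mc C}}(K_{\ov{\mc C}/\bb P^1}+\Delta)$.

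Your worry about non-nodal central fibres is unfounded. The paper only checks that $(\ov{\mc C},\Delta)\to\bb P^1$ is a generic slc family in the sense of \cite[Setup 5.1]{CTV23}: demi-normal total space, $K_{\ov{\mc C}/\bb P^1}+\Delta$ Cartier (true because $\mc C_0$ is Gorenstein), and slc general fibre. Nothing further is required of $\mc C_0$.

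\textbf{The step that fails as written.} Your justification of nefness of $K_{\ov{\mc C}/\bb P^1}+\Delta$ does not work. Relative ampleness plus triviality near $\infty$ does not imply nefness: twisting by $\mathcal O(-m\,\mc C_0)$ preserves both properties but makes the degree on horizontal curves negative for $m\gg0$. Nefness is a genuine semipositivity property of this particular log-canonical bundle. The paper obtains it from \cite[Cor. 5.3(2)]{CTV23}, and you need it as a hypothesis of the slope inequality, not merely for $\lambda_{CM}\ge0$.

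\textbf{Two smaller points.} First, the constant $4(g-1)/g$ is not a usable alternative. It is the Xiao-type bound the paper uses only for unpointed smooth curves (Theorem \ref{T:Xiao}). It is also $<1$ or undefined for $g\le1$, which occurs when $n>0$. Only the constant-one inequality works here. Second, at $\alpha=1$ you do not need a separate argument for $\lambda_{CM}\ge0$, since $\lambda_{CM}\ge\lambda\ge0$ already gives it.
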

\begin{proof}
Let $T:\Theta\to \mc U_{g,n}$ such that $T(1)=(C,p_i)$. 
Let $f:(\mc C, \sigma_i)\to \bb A^1$ be the induced test configuration and consider the canonical compactification $\ov f: (\ov{\mc C},\sigma_i) \to \bb P^1$ which is trivial at infinity.

By Lemma \ref{l:weight-test} and \eqref{E:Kdelta}, we compute 
\begin{equation}\label{E:weiT}
\begin{aligned}
& \wei_T(K+\psi+\alpha(\delta-\psi))=\wei_T((2-\alpha)\lambda_{CM}+(12\alpha-11)\lambda)=\\
& =(2-\alpha)\deg \lambda_{CM}(\ov f)+(12\alpha-11)\wh{\lambda}_1(\ov f)=\\
& =(2-\alpha)(K_{\ov{\mc C}/\bb P^1}+\sum_i \sigma_i)^2+(12\alpha-11)\deg \ov f_*\mathcal{O}_{\ov{\mc C}}(K_{\ov{\mc C}/\bb P^1}+\sum_i \sigma_i)
\end{aligned}
\end{equation}

Observe that $\ov{\mc C}$ is demi-normal by Lemma \ref{l:singularities-test} since the generic fiber (which is $C\times_{k} \Spec k(\bb P^1)$) is nodal, i.e. demi-normal, and $\mc C_0$ is reduced since $T(0)=(\mc C_0,\sigma_i(0))\in \mc U_{g,n}$ by assumption. Hence, the family $\ov f:(\ov{\mc C},\Delta:=\sum_i \sigma_i)\to \bb P^1$ is a generic slc family as in \cite[Setup 5.1]{CTV23} since $K_{\ov{\mc C}/\bb P^1}+\Delta$ is Cartier and the general fiber $(C,\Delta_C=\sum_i p_i)$ is semi-log canonical. Since $K_{\ov{\mc C}/\bb P^1}+\Delta$ is $\ov f$-ample (because it come from a morphism $T:\Theta \to \mc U_{g,n}$), we can apply \cite[Cor. 5.3(2)]{CTV23} in order to deduce that $K_{\ov{\mc C}/\bb P^1}+\Delta$ is nef. We now can apply the slope inequality \cite[Thm. 5.6(3)]{CTV23} with $q=1$ (using that $(K_{\ov{\mc C}/\bb P^1}+\Delta)_C=K_C+\sum_i p_i$ is Cartier and ample because $(C,p_i)\in \mc U_{g,n}$) in order to get 
\begin{equation}\label{E:slo-ine}
   (K_{\ov{\mc C}/\bb P^1}+\Delta)^2\geq  \deg \ov f_*\mathcal{O}_{\ov{\mc C}}(K_{\ov{\mc C}/\bb P^1}+\Delta).
\end{equation}
By combining \eqref{E:weiT} and \eqref{E:slo-ine}, we get that 
\begin{equation}\label{E:wei-fin}
 \wei_T(K+\psi+\alpha(\delta-\psi))\geq (11\alpha-9) \deg \ov f_*\mathcal{O}_{\ov{\mc C}}(K_{\ov{\mc C}/\bb P^1}+\Delta),
\end{equation}
which is non-negative since $\alpha \geq \frac{9}{11}$ and $f_*\mathcal{O}_{\ov{\mc C}}(K_{\ov{\mc C}/\bb P^1}+\Delta)$ is nef (see for instance \cite[Thm. 1.10]{Fujino} or \cite[Thm. 1.1]{CPT}). Since this is true for any $T:\Theta\to \mc U_{g,n}$ such that $T(1)=(C,p_i)$, we deduce that $(C,p_i)$ is $\alpha$-semistable for any $9/11 \leq \alpha \leq 1$.
\end{proof}	

\medskip

We conclude this section with the proof of Theorem \ref{T:Xiao}.

\begin{proof}[Proof of Theorem \ref{T:Xiao}]
We follow the strategy of the proof of Theorem \ref{T:stab}, but applying a different slope inequality. 
Let $T:\Theta\to \mc U_{g}$ such that $T(1)=C$. 
Let $f:\mc C\to \bb A^1$ be the induced test configuration and consider the canonical compactification $\ov f: \ov{\mc C} \to \bb P^1$ which is trivial at infinity. Then, as in Equation \ref{E:weiT}, we have

$$
\wei_T(K+\alpha\delta) = (2-\alpha)K_{\ov{\mc C}/\bb P^1}^2+(12\alpha-11)\deg \ov f_*\mathcal{O}_{\ov{\mc C}}(K_{\ov{\mc C}/\bb P^1}).
$$

Observe that $\ov{\mc C}$ is normal by Lemma \ref{l:singularities-test}, and  $K_{\ov{\mc C}/\bb P^1}$ and $f_*\mathcal{O}_{\ov{\mc C}}(K_{\ov{\mc C}/\bb P^1})$ are nef as explained in the proof of Theorem \ref{T:stab}.  By \cite[Theorem E]{CTV23} (which generalizes \cite{Xiao} to non-smooth surfaces and \cite{CH88} to non-nodal special fibers), we know that 
$$
K_{\ov{\mc C}/\bb P^1}^2 \ge 4\frac{g-1}{g} \deg \ov f_*\mathcal{O}_{\ov{\mc C}}(K_{\ov{\mc C}/\bb P^1}).
$$
Hence 
$$
\wei_T(K+\alpha\delta) \ge \frac{\alpha(8g+4)-(3g+8)}{g}\deg f_*\mathcal{O}_{\ov{\mc C}}(K_{\ov{\mc C}/\bb P^1}),
$$
which is non-negative if 
$
\displaystyle \alpha \ge \frac{3g+8}{8g+4},
$
proving the theorem.  
\end{proof}

\section{Isotrivial specializations of Gorenstein singularities}

\subsection{Local case}\label{Sub:local}

The aim of this subsection is to show that any Gorenstein curve singularity can be isotrivially degenerated to a Gorenstein curve singularity with a $\bb G_m$-action. 

We will start by recalling some facts about Gorenstein curve singularities (resp. with  $\bb G_m$-action), following \cite[VIII]{AK}, \cite[Chap. IV]{Ser} (resp. \cite[Sec. 2]{AFS16}, \cite[Sec. 2]{CY}).

A \emph{curve singularity} (over a fixed algebraically closed ground field $k$) is a complete local reduced Noetherian $k$-algebra $(\mc{O},\mathfrak m )=\mc O$ of dimension one, and hence it 
can be written in the form
$$\mc O=\frac{k[[x_1,\ldots, x_n]]}{I}$$
where $I\subseteq (x_1,\ldots, x_n)^2$. In particular, if $C$ is a curve over $k$ and $q\in C$, then $\wh{\mc O}_{C,q}$ is a curve singularity. 

The \emph{normalization} of $\mc O$ is isomorphic to 
$$
\mc O^n\cong k[[t_1]]\times \ldots \times k[[t_b]],
$$
where $b=b(\mc O)$ is the number of \emph{branches} of $\mc O$. The colength  of $\mc O\subseteq {\mc O}^n$ is called the \emph{delta invariant} of $\mc O$, and it is denoted by $\delta=\delta(\mc O)$.

Since $\mc O$ is local, then its \emph{seminormalization} is isomorphic to 
$$
\mc O^{sn}\cong k[[t_1]]\times_k \ldots \times_k k[[t_b]].
$$
The colength  of $\mc O\subseteq {\mc O}^{sn}$ is called the \emph{genus} of $\mc O$,  denoted by $g=g(\mc O)$, and it is equal to $g=\delta-b+1$. Note that:
\begin{itemize}
    \item $\delta(\mc O)=0$ if and only if $\mc O$ is normal (and connected), or equivalently regular (and connected), i.e. $\mc O=\mc O^n\cong k[[t_1]]$;
     \item $g(\mc O)=0$ if and only if $\mc O$ is seminormal (and connected), i.e. $\mc O=\mc O^{sn}\cong k[[t_1]]\times_k \ldots \times_k k[[t_b]]$.
\end{itemize}

A curve singularity is called \emph{decomposable} if, up to permuting the variables $\{t_1,\ldots,t_b\}$, there exist two curve singularities $\mc O_1\subseteq \mc O_1^n=k[[t_1]]\times \ldots k[[t_{b'}]]$ and $\mc O_2\subseteq \mc O_2^n=k[[t_{b'+1}]]\times \ldots k[[t_{b}]]$ for some $1\leq b'<b$ such that $\mc O=\mc O_1\times_k \mc O_2$. Otherwise, the curve singularity is called \emph{indecomposable}.

The \emph{conductor ideal}
$
{\mathfrak{c}}(\mc O)= {\mathfrak{c}}(\mc O\subseteq \mc O^n):=\{f \in \mc O^n\: : f\cdot \mc O^n\subseteq \mc O\}
$
of the extension $\mc O\subseteq \mc O^n$, which is also the biggest ideal of $\mc O^n$ which is contained in $\mc O$, is equal to 
$${\mathfrak{c}}(\mc O\subseteq \mc O^n)=(t_1^{c_1})\times \ldots \times (t_b^{c_b}),$$ for a $b$-tuple $\un{c}=\un c(\mc O)=(c_1,\ldots, c_b)\in \bb N^b$ (called the \emph{multi-conductance} of $\mc O$). The \emph{conductance} of $\mc O$ is defined to be $c=c(\mc O)=|\un c(\mc O)|:=\sum_i c_i$.

\begin{lemma}\label{L:ci}
The multi-conductance $\un c(O)=(c_1,\ldots, c_b)$ of a non-regular curve singularity $\mc O$ satisfies
\begin{enumerate}
    \item \label{L:ci1} $c_i\geq 1$ for every $1\leq i \leq b$;
    \item \label{L:ci2} $c_i=1$ if and only if $b\geq 2$ and there exists a curve singularity $\mc O'\subseteq (\mc O')^{sn}=k[[t_1]]\times \ldots \times \wh{k[[t_{i}]]}\times \ldots k[[t_b]]$ such that 
    $$\mc O=k[[t_i]]\times_k \mc O'.$$
    In particular, if $\mc O$ is indecomposable, then $c_i\geq 2$ for every $1\leq i \leq b$.
\end{enumerate}
\end{lemma}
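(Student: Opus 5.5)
Both parts follow from two elementary facts. First, $\mc O \subseteq \mc O^{sn}$, where $\mc O^{sn}=k[[t_1]]\times_k\cdots\times_k k[[t_b]]$ is the set of tuples with equal constant terms. Second, the conductor ${\mathfrak c}$ is an ideal of $\mc O^n$ contained in $\mathfrak m$, since $\mc O$ is non-regular and so ${\mathfrak c}\neq \mc O^n$.

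\textbf{Part (1).} Suppose $c_i=0$ for some $i$. Then the idempotent $e_i=(0,\ldots,1,\ldots,0)$ lies in ${\mathfrak c}\subseteq \mc O$. The tuple $e_i$ has constant terms $1$ in slot $i$ and $0$ elsewhere. If $b\ge 2$, this contradicts $\mc O\subseteq \mc O^{sn}$. If $b=1$, then $c_1=0$ means ${\mathfrak c}=\mc O^n$, so $\mc O$ is regular, again a contradiction. Hence $c_i\ge 1$ for all $i$.

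\textbf{Part (2), ``if''.} Suppose $b\ge 2$ and $\mc O=k[[t_i]]\times_k \mc O'$. Any element of $\mc O^n$ of the form $(0,\ldots,0,t_i h,0,\ldots,0)$, with $t_ih$ in slot $i$, lies in $\mc O$: its $i$-th coordinate has zero constant term, and its $\mc O'$-part is $0$, which has the same constant term. Hence $(t_i)$ in slot $i$ is contained in ${\mathfrak c}$, so $c_i\le 1$. Combined with part (1), $c_i=1$.

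\textbf{Part (2), ``only if''.} Assume $c_i=1$. If $b=1$, then ${\mathfrak c}=\mathfrak m_{\mc O^n}=(t_1)$, so $\mc O\supseteq k+(t_1)=\mc O^n$ and $\mc O$ is regular, a contradiction. So $b\ge 2$.

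Let $\mc O'$ be the image of $\mc O$ under the projection $p'$ forgetting the $i$-th coordinate. This is a complete local reduced one-dimensional $k$-algebra with normalization $\prod_{j\ne i}k[[t_j]]$, and it lies in the corresponding seminormalization. We have $\mc O\subseteq k[[t_i]]\times_k \mc O'$, because every element of $\mc O$ has equal constant terms.

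For the reverse inclusion, take $(a,f')$ with $a\in k[[t_i]]$, $f'\in \mc O'$ and $a(0)=f'(0)$. Choose $f\in\mc O$ with $p'(f)=f'$. Then $f_i(0)=f'(0)=a(0)$, so $a-f_i\in (t_i)$. The element with $a-f_i$ in slot $i$ and $0$ elsewhere lies in ${\mathfrak c}\subseteq\mc O$, because $c_i=1$. Adding it to $f$ gives $(a,f')\in\mc O$. Therefore $\mc O=k[[t_i]]\times_k\mc O'$.

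\textbf{The ``in particular''.} If $\mc O$ is indecomposable, the fibre-product decomposition just obtained is impossible, so no $c_i$ equals $1$. By part (1), $c_i\ge 2$ for every $i$.

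\textbf{Main subtlety.} The key step is the reverse inclusion in the ``only if'' direction. It uses precisely that the slot-$i$ ideal $(t_i)$, embedded with zeros in the other coordinates, lies in ${\mathfrak c}$. One should also check that $\mc O'$ is a genuine curve singularity, and that the case $b'=1$ fits the definition of decomposable; this holds because $k[[t_i]]$ is itself a curve singularity.
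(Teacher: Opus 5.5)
Your proof is correct and follows essentially the paper's argument. In both, the key step is to project $\mc O$ away from the $i$-th branch to get $\mc O'$, note that $\mc O\subseteq k[[t_i]]\times_k \mc O'$, and use that the kernel $(t_i)$ in slot $i$ lies in $\mathfrak c\subseteq \mc O$ to force equality; your element-wise lifting just spells this out. The minor variations are harmless: in part (1) you use equal constant terms rather than locality, and in the ``if'' direction you exhibit the slot-$i$ ideal inside $\mc O$ directly rather than quoting a conductor formula for fibre products, which is slightly cleaner than the paper's version.
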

\begin{proof}
If $b=1$ then $c(\mc O)=c_1\geq 2$ since $k+t_1k[[t_1]]=k[[t_1]]$ and $\mc O$ is not regular.
Suppose now that $b\geq 2$. Up to permuting the variables $\{t_1,\ldots, t_b\}$, we can assume that $i=1$. 

In order to prove part \eqref{L:ci1}, suppose by contradiction that $c_1=0$. Then $(1,0,\ldots,0)\in {\mathfrak{c}}(\mc O\subseteq \mc O^n)$ which would imply that 
$$(1,0,\ldots, 0) \mc O^n=k[[t_1]]\times 0\times \ldots \times 0\subseteq \mc O$$
and this violates the fact that $\mc O$ is local. 

Let us prove part \eqref{L:ci2}. The if condition follows from the straightforward formula
$$
{\mathfrak c}(k[[t_1]]\times_k \mc O')={\mathfrak c}(k[[t_1]])\times {\mathfrak c}(\mc O')=(t_1)\times {\mathfrak c}(\mc O').
$$
Conversely, assume that $c_1=1$. Then, using that $(t_1,0,\ldots, 0)\in {\mathfrak c}(\mc O)$, we have 
\begin{equation}\label{E:inclu1}
(t_1k[[t_1]],0,\ldots,0)=(t_1,0,\ldots,0)\mc O^n\subset \mc O.
\end{equation}
Consider the projection onto the last $b-1$ factors
$$\pi:k[[t_1]]\times_k k[[t_2]]\times_k \ldots \times_k k[[t_b]]\to k[[t_2]]\times_k \ldots \times_k k[[t_b]]$$
and let $\mc O':=\pi(\mc O)$ which is clearly a curve singularity with $(\mc O')^n=k[[t_2]]\times \ldots \times k[[t_b]].$ By construction, we have that 
\begin{equation}\label{E:inclu2}
    \mc O\subseteq k[[t_1]]\times_k \mc O'.
    \end{equation}
Moreover, since the kernel of $\pi$ is equal to $(t_1k[[t_1]],0,\ldots, 0)$ and it is contained in $\mc O$ by \eqref{E:inclu1}, we must have equality in \eqref{E:inclu2} and we are done. 
\end{proof}

The \emph{dualizing module} $\omega_{\mc O}$ can be described as the $\mc O$-submodule of the module of rational differentials on $\mc O^n$ 
$$
\Omega^{rat}_{\mc O^n}:=\left\{\omega=(r_1dt_1,\ldots,r_bdt_b)\: : r_i\in k((t_i)) \text{ for every } 1\leq i \leq b \right\}
$$
given by 
\begin{equation}\label{E:Ros}
  \omega_{\mc O}:=\{\omega\in \Omega^{rat}_{\mc O^n} : \sum_{i=1}^n \Res_{t_i}(g\omega)=0 \text{ for every } g\in \mc O\}.   
\end{equation}
The rational differentials appearing in \eqref{E:Ros} are called Rosenlicht differentials of $\mc O$.

\begin{lemma}\label{L:omega}
 Let $\mc O$ be a curve singularity.
 \begin{enumerate}
     \item \label{L:omega1} We have that 
     $$
     \omega_{\mc O}\subseteq k[[t_1]]\frac{dt_1}{t_1^{c_1}}\oplus \ldots \oplus k[[t_b]]\frac{dt_b}{t_b^{c_b}}.
     $$
     \item \label{L:omega2} The module $\omega_{\mc O}$ contains an element of the form 
     $$
     \omega=\left(u_1\frac{dt_1}{t_1^{c_1}}, \ldots, u_b \frac{dt_b}{t_b^{c_b}}\right), \text{ where } u_i\in k[[t_i]]^*.
     $$
 \end{enumerate}
\end{lemma}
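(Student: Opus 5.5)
Both parts rest on the residue description \eqref{E:Ros} of $\omega_{\mc O}$ together with the duality between $\mc O$ and $\omega_{\mc O}$ coming from the residue pairing
$$
\langle g,\omega\rangle:=\sum_{i=1}^b \Res_{t_i}(g\omega), \qquad g\in \mc O^n,\ \omega\in \Omega^{rat}_{\mc O^n}.
$$
We use two standard facts from \cite[IV]{Ser}. First, $\Omega^{rat}_{\mc O^n}$ with this pairing is a $k$-linearly topologized space whose open lattices are in perfect duality. Second, the orthogonal of a lattice $L\subseteq \mc O^n$ of finite codimension is the set of rational differentials $\omega$ with $\langle L,\omega\rangle=0$. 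By \eqref{E:Ros}, $\omega_{\mc O}=\mc O^{\perp}$. In particular, the regular differentials $\omega_{\mc O^n}=\bigoplus k[[t_i]]dt_i$ equal $(\mc O^n)^{\perp}$, since $\Res_{t_i}(t_i^m r\,dt_i)=0$ for all $m\geq 0$ forces $r\in k[[t_i]]$.

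For part \eqref{L:omega1}, take $\omega=(r_i\,dt_i)\in\omega_{\mc O}$ and $h=(h_i)\in \mc O^n$. Since $\mathfrak c\subseteq \mc O$, we have $(t_1^{c_1}h_1,\ldots,t_b^{c_b}h_b)\in \mc O$, so $\sum_i\Res_{t_i}(t_i^{c_i}h_i r_i\,dt_i)=0$ for every choice of $h$. Choosing $h$ supported on a single branch $i$ and equal to an arbitrary power series there gives $\Res_{t_i}(t_i^{m}\,t_i^{c_i}r_i\,dt_i)=0$ for all $m\geq 0$. Hence $t_i^{c_i}r_i\in k[[t_i]]$, which is exactly the inclusion in \eqref{L:omega1}. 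Equivalently, $\omega_{\mc O}\subseteq \mathfrak c^{\perp}=\bigoplus k[[t_i]]t_i^{-c_i}dt_i$.

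For part \eqref{L:omega2}, we must show that $\omega_{\mc O}$ contains an element whose $i$-th component has pole order exactly $c_i$, simultaneously for all $i$. Fix $i$ and set
$$
W_i:=\{\omega\in\omega_{\mc O}: \operatorname{ord}_{t_i}(\omega_i)>-c_i\}.
$$
This is a $k$-subspace of $\omega_{\mc O}$, and we claim it is proper. Suppose instead that every $\omega\in\omega_{\mc O}$ has $t_i^{c_i-1}r_i\in k[[t_i]]$. Put $e_i:=(0,\ldots,t_i^{c_i-1},\ldots,0)$ and consider the lattice $M:=\mc O+e_i\mc O^n$. Every element of $\omega_{\mc O}$ pairs to zero with $\mc O$, and by the assumption it also pairs to zero with $e_i\mc O^n$. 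So $M^{\perp}\supseteq \mc O^{\perp}$, and since $M\supseteq \mc O$ forces $M^{\perp}\subseteq \mc O^{\perp}$, we get $M^{\perp}=\mc O^{\perp}$. Perfect duality then gives $M=\mc O$, i.e. $e_i\mc O^n\subseteq \mc O$. But then $\mathfrak c\ni e_i$, which contradicts the minimality of $c_i$.

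Thus each $W_i$ is a proper subspace. Since $k$ is infinite (it is algebraically closed), $\omega_{\mc O}$ is not a finite union of proper subspaces. Any $\omega\notin\bigcup_i W_i$ then has components $u_i\,t_i^{-c_i}dt_i$ with $u_i$ a unit in $k[[t_i]]$. The subspaces $W_i$ are infinite-dimensional but of finite codimension, since they contain $\omega_{\mc O^n}\subseteq \omega_{\mc O}$, which itself has finite codimension by \eqref{L:omega1}. So one can pass to the finite-dimensional quotient $\omega_{\mc O}/\omega_{\mc O^n}$ and apply the union-of-subspaces argument there.

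The main obstacle is the duality step $M^{\perp}=\mc O^{\perp}\Rightarrow M=\mc O$. I plan to justify it by finite-dimensional linear algebra. The pairing induces a perfect pairing between $\mc O^n/t^{N}\mc O^n$ and $t^{-N}\omega_{\mc O^n}/\omega_{\mc O^n}$ for $N\geq \un c$. Since $\mc O$ contains $t^{N}\mc O^n$, the inclusions $\mc O\subseteq M$ reduce to subspaces of $\mc O^n/t^{N}\mc O^n$, and there taking orthogonals is injective. Alternatively, I could simply cite \cite[IV, §11]{Ser}, where $\dim \mathfrak c^{\perp}/\omega_{\mc O}$ is computed in exactly this way.
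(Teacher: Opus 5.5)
Your proposal is correct and follows essentially the same route as the paper. Part (1) is the same residue pairing against conductor elements supported on a single branch (the paper uses the one test element $(t_1^{-1-\val(r_1)},0,\ldots,0)$ against a hypothetical $\omega$ with too large a pole), and for part (2) the paper only refers to the proof of Prop.~7 in Chapter IV of \cite{Ser}, which is precisely your argument: duality shows each $W_i$ is a proper subspace (otherwise $(0,\ldots,t_i^{c_i-1},\ldots,0)$ would lie in the conductor), and a vector space over an infinite field is not a finite union of proper subspaces.
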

\begin{proof}
Part \eqref{L:omega1}: if the statement is false, then, up to permuting the variables $\{t_1,\ldots, t_b\}$, we can find an element   $\omega_o=(r_1dt_1,\ldots,r_bdt_b)\in \omega_{\mc O}$ such that $\val r_1 < c_1$. This leads to a contradiction since the element $g_o:=(t_1^{-1-\val(r_1)}, 0,\ldots, 0)$ belongs to $(t_1^{c_1})\times \ldots \times (t_b^{c_b})={\mathfrak{c}}(\mc O\subseteq \mc O^n)\subseteq \mc O$ and we have that  
$$
\Res(g_o\omega_o)=\Res_{t_1}(t_1^{-1-\val(r_1)}r_1dt_1)\neq 0.
$$

Part \eqref{L:omega2}: see the proof of \cite[Prop. 7]{Ser}.
\end{proof}

The curve singularity $\mc O$ is said to be \emph{Gorenstein} if $\omega_{\mc O}$ is free of rank one. In the following Lemma, we collect some property of Gorenstein singularities that we will need later on.

\begin{lemma}\label{L:Gor}
 Let $\mc O$ be a curve singularity.
 \begin{enumerate}
     \item \label{L:Gor1} We have that $c(\mc O)\leq 2\delta(\mc O)$ with equality if and only if $\mc O$ is Gorenstein.
     \item \label{L:Gor2} $\mc O$ is Gorenstein and decomposable if and only if $\mc O$ is a node.
     \item \label{L:Gor3} If $\mc O$ is Gorenstein, then $\omega_{\mc O}$ is generated by an element of the form 
     $$
     \omega=\left(u_1\frac{dt_1}{t_1^{c_1}}, \ldots, u_b \frac{dt_b}{t_b^{c_b}}\right), \text{ where } u_i\in k[[t_i]]^*.
     $$
     \item \label{L:Gor4} If $\mc O$ is Gorenstein, then $$\omega_{\mc O}\otimes_{\mc O} \mc O^n=  k[[t_1]]\frac{dt_1}{t_1^{c_1}}\oplus \ldots \oplus k[[t_b]]\frac{dt_b}{t_b^{c_b}}.$$
 \end{enumerate}
\end{lemma}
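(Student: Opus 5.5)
The plan is to treat the four parts in the order (1), (3), (4), (2), relying on Lemma \ref{L:omega} and the residue pairing throughout.

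\emph{Part (1).} This is Serre's classical inequality \cite[Chap. IV]{Ser}. Consider the residue pairing
$$\mc O^n/{\mathfrak c}\times \left(\bigoplus_i k[[t_i]]\frac{dt_i}{t_i^{c_i}}\right)\Big/\Omega^{reg}\to k,\qquad (g,\omega)\mapsto \sum_i \Res_{t_i}(g\omega),$$
where $\Omega^{reg}$ denotes the regular differentials $\bigoplus_i k[[t_i]]dt_i$. This pairing is perfect, and both sides have dimension $c$. By Lemma \ref{L:omega}\eqref{L:omega1}, $\omega_{\mc O}/\Omega^{reg}$ is exactly the annihilator of $\mc O/{\mathfrak c}$, so
$$\dim \omega_{\mc O}/\Omega^{reg}=c-\dim(\mc O/{\mathfrak c})=c-(c-\delta)=\delta,$$
where we used $\dim \mc O^n/{\mathfrak c}=c$. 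Next take the element $\omega$ from Lemma \ref{L:omega}\eqref{L:omega2}. Since the $u_i$ are units, $\omega\mc O^n=\bigoplus_i k[[t_i]]dt_i/t_i^{c_i}$, and $\omega\mc O\subseteq \omega_{\mc O}$. Hence
$$\dim \omega_{\mc O}/\omega\mc O=\delta-\dim(\omega\mc O/\Omega^{reg}).$$
Because $\omega\mc O\supseteq\Omega^{reg}$ and $\mc O\cong\omega\mc O$, this equals
$$\delta-\big(\dim(\omega\mc O^n/\Omega^{reg})-\dim(\mc O^n/\mc O)\big)=\delta-(c-\delta)=2\delta-c.$$
This gives $c\le 2\delta$, with equality if and only if $\omega_{\mc O}=\omega\mc O$. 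It remains to see that $\omega_{\mc O}$ free implies $\omega_{\mc O}=\omega\mc O$. The key step is this: if $\omega_{\mc O}=\eta\mc O$, then $\eta\mc O^n=\omega_{\mc O}\mc O^n$ contains $\omega$. Conversely, by Lemma \ref{L:omega}\eqref{L:omega1}, $\eta\mc O^n\subseteq\omega\mc O^n$. So $\eta=\omega\cdot v$ with $v\in(\mc O^n)^*$, and then the dimension count $\dim\omega_{\mc O}/\Omega^{reg}=\delta$ forces $2\delta=c$.

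\emph{Parts (3) and (4).} Both now follow from the same argument. In the Gorenstein case equality holds in (1), so $\omega$ itself generates $\omega_{\mc O}$, which is (3). Then $\omega_{\mc O}\mc O^n=\omega\mc O^n$ is the stated direct sum, which is (4). The module $\omega_{\mc O}\otimes_{\mc O}\mc O^n$ is torsion-free because $\omega_{\mc O}$ is free of rank one, so it identifies with its image.

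\emph{Part (2).} The node $k[[t_1]]\times_k k[[t_2]]$ has $\delta=1$ and $c=2$, so it is Gorenstein by (1). Conversely, suppose $\mc O=\mc O_1\times_k\mc O_2$. Then
$$\delta(\mc O)=\delta(\mc O_1)+\delta(\mc O_2)+1,$$
and the conductors are computed factorwise, except that a regular factor $k[[t]]$ contributes $c=1$ instead of $0$. Together with $c(\mc O_j)\le 2\delta(\mc O_j)$, this gives $c(\mc O)\le 2\delta(\mc O)-2+(\text{number of regular factors})$. Equality in (1) then forces both factors to be regular, so $\mc O$ is a node. I expect this conductor bookkeeping in the decomposable case to be the main point needing care; the corrected formula comes from Lemma \ref{L:ci}.
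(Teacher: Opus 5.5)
Your proof is correct, but it is organized differently from the paper's, which is almost entirely by citation. The paper takes (1) from \cite[Ch. IV, Prop. 7]{Ser} and (2) from \cite[Prop. 2.1]{AFS16}. It obtains (3) directly from Lemma \ref{L:omega}: a generator $\eta$ of $\omega_{\mc O}$ relates to the element of Lemma \ref{L:omega}\eqref{L:omega2} by a factor in $\mc O$, and Lemma \ref{L:omega}\eqref{L:omega1} then forces $\eta$ to have exact pole orders $c_i$. Finally, (4) follows from (3) together with Lemma \ref{L:omega}\eqref{L:omega1}.

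You instead reconstruct Serre's residue-duality argument and extract the exact formula $\dim_k \omega_{\mc O}/\omega\mc O=2\delta-c$ for the element $\omega$ of Lemma \ref{L:omega}\eqref{L:omega2}. This gives (1) and (3) simultaneously, since in the Gorenstein case this particular $\omega$ generates. Your converse step, showing $\eta=\omega v$ with $v\in(\mc O^n)^*$, is essentially the paper's route to (3). For (2) you replace the citation by $\delta$-additivity plus conductor bookkeeping, which is transparent and self-contained. What your route buys is self-containedness and the explicit criterion $c=2\delta \Leftrightarrow \omega_{\mc O}=\omega\mc O$; the paper's buys brevity.

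Two steps deserve a line each when you write this up.

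First, the inclusion $\Omega^{reg}\subseteq\omega\mc O$ holds because $\Omega^{reg}=\omega\,\mathfrak c$, as the $u_i$ are units and $\mathfrak c$ is an $\mc O^n$-ideal, and $\mathfrak c\subseteq \mc O$.

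Second, the conductor formula you need in (2) is not contained in Lemma \ref{L:ci}, which only treats splitting off a smooth branch. The case of two non-normal factors must be proved directly. Write $\mathfrak m_j$ for the maximal ideal of $\mc O_j$. Then $f=(f_1,f_2)$ satisfies $f\mc O^n\subseteq \mc O_1\times_k\mc O_2$ if and only if $f_j\mc O_j^n\subseteq \mathfrak m_j$ for $j=1,2$. The largest $\mc O_j^n$-ideal contained in $\mathfrak m_j$ is $\mathfrak c(\mc O_j)$ when $\mc O_j$ is not normal, since $\mathfrak c(\mc O_j)$ is then a proper ideal of $\mc O_j$. It is $(t)$ when $\mc O_j=k[[t]]$. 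This yields exactly your count $c(\mc O)=c(\mc O_1)+c(\mc O_2)+(\text{number of regular factors})$.
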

\begin{proof}
    Part \eqref{L:Gor1}: see \cite[Ch. IV, Prop. 7]{Ser}.
    Part \eqref{L:Gor2}: see \cite[Prop. 2.1]{AFS16}.
    Part \eqref{L:Gor3} follows by Lemma \ref{L:omega}.
    Part \eqref{L:Gor4} follows by combining part \eqref{L:Gor3} and Lemma \ref{L:omega}\eqref{L:omega1}.
\end{proof}

Suppose now that $\mc O$ is a \emph{Gorenstein curve singularity with a $\bb G_m$-action}.
Then the action of $\bb G_m$ extends to $\mc O^n$ in such a way that the inclusion $\mc O\subset \mc O^n$ is $\bb G_m$-equivariant. From Lemma \ref{L:Gor}\eqref{L:Gor3}, we deduce that dualizing module $\omega_{\mc O}$ is freely generated by a Rosenlicht differential of the form
$$
\omega_0=\left(K_1\frac{dt_1}{t_1^{c_1}},\ldots,K_b\frac{dt_b}{t_b^{c_b}}  \right), \text{ where } K_i\in k^*.
$$
If, furthermore, $\mc O$ is non-nodal (which implies that $c_i\geq 2$ for any $i$, by Lemmas \ref{L:ci}\eqref{L:ci2} and \ref{L:Gor}\eqref{L:Gor2}), then the action of $\bb G_m$ on $\mc O^n$ is given by (up to scaling by a common factor and changing signs)
\begin{equation}\label{E:Gm-O}
\lambda\cdot (t_1,\ldots, t_b):=(\lambda^{-a_1}t_1, \ldots, \lambda^{-a_b}t_b),
\end{equation}
where 

$\star$ $l:=l(\mc O)=\lcm (c_1-1,\ldots, c_b-1)\in \bb N_{>0}$;

$\star$ $\un a:=\un a(\mc O)=(a_1:=\frac{l}{c_1-1},\ldots, a_b:=\frac{l}{c_b-1})$ with $\gcd (a_1,\ldots, a_b)=1$.

We will always normalize the $\mathbb G_m$-action on $\mc O$ in such a way that it is given by \eqref{E:Gm-O}, and in particular it has negative weights. Given such a Gorenstein curve singularity $\mc O$ with $\mathbb G_m$-action, we will denote by $\Opp$ the opposite curve singularity, i.e. the curve singularity where the action on the normalization $(\Opp)^n$ is given by  
\begin{equation}\label{E:Gm-opp}
\lambda\cdot (t_1,\ldots, t_b):=(\lambda^{a_1}t_1, \ldots, \lambda^{a_b}t_b).
\end{equation}

\vspace{0.1cm}

Curves singularities with fixed multi-conductance can be parametrizing via territories, as we now explain (following \cite{BGS}). 
Fix a $b$-tuple $\un c=(c_1,\ldots, c_b)\in \bb N_{\geq 2}^b$ such that $c=|\un c|=2\delta$ for some $\delta\geq 1$ and set $g:=\delta-b+1$. As explained in \cite[Sec. 3]{BGS}, there exists a projective scheme $\Ter_{\un c}$ (which is denoted by $\Ter_{A_{\un c}^+}^g$ in \cite[Page 17]{BGS}) whose $k$-points are 
$$\Ter_{\un c}(k):=\{B\subset A_{\un c}:=\frac{k[t_1]}{(t_1^{c_1})}\times \ldots \times \frac{k[t_b]}{(t_b^{c_b})}: \: B \text{ is local and of codimension } \delta\},$$
and an open subscheme $\Ter_{\un c}^o\subset \Ter_{\un c}$ (which is denoted by $\Ter_{\mc S}(g,\un c)$ in \cite[Def. 3.4]{BGS}) whose $k$-points are 
$$\Ter^o_{\un c}(k):=\{B\in \Ter_{\un c}: \: {\mathfrak c}(B\subset A_{\un c})=(0)\}.$$
Given $B\in \Ter_{\un c}$, we can produce a curve singularity $\mc O(B)$  by taking the following fiber product 
$$
\begin{tikzcd}
  \mc O(B) \arrow[hook, d] \arrow[r] \arrow[dr, phantom, "\lrcorner", very near start]
  & B \arrow[hook, d] \\
   k[[t_1]]\times \ldots \times k[[t_b]] \arrow["\pi", twoheadrightarrow, r]   & A_{\un c} \\
\end{tikzcd}
$$
In this way, we get a surjective map 
\begin{equation}\label{E:Ter-O}
\begin{aligned}
   \Ter_{\un c}(k) & \twoheadrightarrow \left\{\begin{aligned}\text{Curve singularities with delta invariant } \delta \\ \text{and conductor containing } (t_1^{c_1})\times \ldots \times (t_b^{c_b})\end{aligned}\right\}\\
   B\subset A_{\un c} & \mapsto \mc O(B)
\end{aligned}
\end{equation}

\begin{lemma}\label{L:O-Gor}
For a given $B\in \Ter_{\un c}(k)$, we have that
$$
\mc O(B) \text{ is Gorenstein } \Leftrightarrow \un c(\mc O(B))=\un c \Leftrightarrow B\in \Ter^o_{\un c}(k).
$$
\end{lemma}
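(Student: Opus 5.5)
The plan is to prove the two equivalences separately. Throughout, write $\mc O=\mc O(B)$ and $\mc O^n=k[[t_1]]\times\ldots\times k[[t_b]]$. By construction $\mc O$ contains $\ker\pi=(t_1^{c_1})\times\ldots\times(t_b^{c_b})$, so $\mathfrak c(\mc O)\supseteq \ker \pi$. Equivalently, $c_i(\mc O)\le c_i$ for every $i$, hence $c(\mc O)\le c=2\delta$. Since $\pi$ is surjective, $\mc O^n/\mc O\cong A_{\un c}/B$, so $\delta(\mc O)=\delta$.

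\emph{First equivalence.} By Lemma \ref{L:Gor}\eqref{L:Gor1}, $\mc O$ is Gorenstein if and only if $c(\mc O)=2\delta(\mc O)=2\delta=|\un c|$. Since $c_i(\mc O)\le c_i$ componentwise, the equality of sums $c(\mc O)=|\un c|$ holds if and only if $c_i(\mc O)=c_i$ for all $i$, that is, $\un c(\mc O)=\un c$.

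\emph{Second equivalence.} We compare the conductor of $\mc O\subseteq\mc O^n$ with that of $B\subseteq A_{\un c}$. The claim is that $\mathfrak c(\mc O)=\pi^{-1}(\mathfrak c(B\subset A_{\un c}))$.

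For the inclusion $\subseteq$: if $f\mc O^n\subseteq\mc O$, then $\pi(f)A_{\un c}=\pi(f\mc O^n)\subseteq \pi(\mc O)=B$.

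For the inclusion $\supseteq$: if $\pi(f)A_{\un c}\subseteq B$, then $\pi(f\mc O^n)\subseteq B$, so $f\mc O^n\subseteq \pi^{-1}(B)=\mc O$, using that $\mc O$ is the full preimage by the fiber product definition.

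Consequently $\mathfrak c(\mc O)=\ker\pi=(t_1^{c_1})\times\ldots\times(t_b^{c_b})$, i.e. $\un c(\mc O)=\un c$, holds exactly when $\mathfrak c(B\subset A_{\un c})=(0)$, i.e. $B\in\Ter^o_{\un c}(k)$.

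The step needing the most care is the identification of conductors in the second equivalence. It relies on $\mc O$ being the full preimage $\pi^{-1}(B)$, which holds precisely because $\mc O(B)$ is defined as a fiber product. One should also check that the multi-conductance is read off correctly: an ideal of $\mc O^n$ containing $\ker\pi$ equals $\ker\pi$ if and only if its image in $A_{\un c}$ is zero. This is immediate because every ideal of $\mc O^n$ has the form $(t_1^{e_1})\times\ldots\times(t_b^{e_b})$.
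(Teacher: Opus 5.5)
Your proposal is correct and follows the paper's proof. The first equivalence comes from Lemma \ref{L:Gor}\eqref{L:Gor1} together with the inclusion $\ker\pi\subseteq\mathfrak c(\mc O(B))$. The second comes from the identity $\mathfrak c(\mc O(B))=\pi^{-1}(\mathfrak c(B\subset A_{\un c}))$, and you also verify that identity and $\delta(\mc O(B))=\delta$, which the paper only calls easily checked.
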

\begin{proof}
Indeed, since $2\delta=|\un c|$ by assumption and $(t_1^{c_1})\times \ldots \times (t_b^{c_b}) \subseteq \mathfrak c(\mc O(B)\subset \mc O(B)^n)$, 
Lemma \ref{L:Gor}\eqref{L:Gor1} implies that $\mc O(B)$ is Gorenstein if and only if  $(t_1^{c_1})\times \ldots \times (t_b^{c_b}) = \mathfrak c(\mc O(B)\subset \mc O(B)^n)$, or equivalently if and only if $\un c(\mc O(B))=\un c$. We now conclude using the following easily checked formula
$$\mathfrak c(\mc O(B)\subset k[[t_1]]\times \ldots \times k[[t_b]])=\pi^{-1}(\mathfrak c(B\subset A_{\un c})).$$
\end{proof}

We now show that we can always degenerate an element $B\in \Ter_{\un c}^o(k)$ to an element of $\Ter_{\un c}^o(k)$ with a $\bb G_m$-action. With this aim, define 
$$\begin{aligned} 
 \un a:=\left(a_1:=\frac{l}{c_1-1},\ldots, a_b:=\frac{l}{c_b-1}\right) \text{ where } l:=\lcm (c_1-1,\ldots, c_b-1)\\
\end{aligned}$$
and consider the action of $\bb G_m$ on $k[[t_1]]\times \ldots \times k[[t_b]]$ and on $A_{\un c}$ given by 
\begin{equation}\label{E:actGm}
\lambda\cdot (t_1,\ldots, t_b):=(\lambda^{-a_1}t_1, \ldots, \lambda^{-a_b}t_b),
\end{equation}
We get an induced action of $\bb G_m$ on $\Ter_{\un c}$ and on curve singularities with normalization isomorphic to $k[[t_1]]\times \ldots \times k[[t_b]]$, in such a way that \eqref{E:Ter-O} is equivariant.

\begin{proposition}\label{P:lim-Ter}
 Let $B\in \Ter_{\un c}^o$. Then $B_o=\lim_{\lambda\to 0} \lambda\cdot B\in \Ter_{\un c}^o$.   
\end{proposition}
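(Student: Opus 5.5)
The plan is to characterize membership in $\Ter^o_{\un c}$ by a \emph{closed} condition, namely isotropy with respect to a residue pairing. The specific weights $\un a$ in \eqref{E:actGm} are designed to make this condition survive the limit $\lambda\to 0$.

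First, since $\Ter_{\un c}$ is projective and carries a $\bb G_m$-action, the limit $B_o=\lim_{\lambda\to 0}\lambda\cdot B$ exists in $\Ter_{\un c}$. So $B_o$ is already a local subalgebra of $A_{\un c}$ of codimension $\delta$, and only the vanishing of its conductor needs proof. The minimal nonzero ideals of $A_{\un c}$ are the lines $k\cdot t_i^{c_i-1}e_i$, where $e_i$ denotes the $i$-th idempotent. Hence ${\mathfrak c}(B'\subset A_{\un c})=(0)$ if and only if $t_i^{c_i-1}e_i\notin B'$ for every $i$.

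Second, I will encode this by pairings. For $K=(K_1,\ldots,K_b)$ with all $K_i\neq0$, define on $A_{\un c}$ the bilinear form
$$\langle f,g\rangle_K:=\sum_i K_i\Res_{t_i}\Bigl(fg\,\frac{dt_i}{t_i^{c_i}}\Bigr),$$
which is well defined modulo $t_i^{c_i}$. More generally I allow coefficients $u_i\in k[[t_i]]^*$ in place of the constants $K_i$. Two observations make this work.
\begin{enumerate}
\item[(a)] If $B'$ is isotropic for such a form, then $B'\in\Ter^o_{\un c}$. Indeed, if $t_i^{c_i-1}e_i\in B'$, then pairing it with $1\in B'$ gives $K_i\neq0$ (respectively $u_i(0)\neq 0$), a contradiction.
\item[(b)] Every $B\in\Ter^o_{\un c}$ is isotropic for such a form. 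By Lemma \ref{L:O-Gor}, $\mc O(B)$ is Gorenstein. By Lemma \ref{L:Gor}\eqref{L:Gor3}, $\omega_{\mc O(B)}$ is generated by $\omega=(u_i\,dt_i/t_i^{c_i})$ with $u_i$ units. By the Rosenlicht description \eqref{E:Ros}, $\Res(fg\omega)=0$ for all $f,g\in\mc O(B)$, since $fg\in \mc O(B)$. This pairing descends to $A_{\un c}$, and $B$ is isotropic for it.
\end{enumerate}

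Third, I take the limit. The differential $dt_i/t_i^{c_i}$ has weight $a_i(c_i-1)=l$ under \eqref{E:actGm}, and this weight is independent of $i$. This is exactly where the choice of $\un a$ enters. Consequently $\lambda\cdot B$ is isotropic for the transported form $\lambda\cdot\omega$. After rescaling by the common factor $\lambda^{\pm l}$, this form equals $\bigl(u_i(\lambda^{\mp a_i}t_i)\,dt_i/t_i^{c_i}\bigr)$, which converges as $\lambda\to0$ to $\omega_0=(u_i(0)\,dt_i/t_i^{c_i})$ with constant nonzero coefficients. Isotropy is a closed condition on the product of $\Ter_{\un c}$ with the space of bilinear forms: it is the vanishing of the form restricted to the tautological subspace. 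Therefore $B_o$ is isotropic for $\omega_0$, and (a) gives $B_o\in\Ter^o_{\un c}$.

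The main obstacle is the sign and direction bookkeeping in the last step. One must check that the rescaled forms $\lambda^{\pm l}(\lambda\cdot\omega)$ really converge to the constant-coefficient form, rather than to a degenerate one, along the same direction $\lambda\to 0$ that defines $B_o$. One must also phrase the closedness of isotropy cleanly via the universal subbundle on the Grassmannian-type scheme $\Ter_{\un c}$. The uniform weight $l$ is what prevents any single branch from dominating or vanishing in the limit.
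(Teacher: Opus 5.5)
Your proof is correct and is essentially the paper's argument. The paper likewise takes the unit-coefficient generator $\omega$ of $\omega_{\mc O(B)}$, uses the common weight $l$ of the $dt_i/t_i^{c_i}$ to obtain $\omega_o=(u_i(0)\,dt_i/t_i^{c_i})\in\omega_{\mc O(B_o)}$, and concludes $\un c(\mc O(B_o))=\un c$ via Lemma \ref{L:omega}\eqref{L:omega1} and Lemma \ref{L:O-Gor}. That is your step (a) in different language, since isotropy of $B_o$ for $\omega_o$ is exactly $\omega_o$ being Rosenlicht for $\mc O(B_o)$; your closed-condition phrasing also makes explicit the limit step that the paper only asserts.

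The sign you flag, which the paper also glosses over, is fixed by the blow-up coordinate $x_i/t^{a_i}$ in Theorem \ref{T:deg-sing}. There $\lambda\cdot B$ must be the image of $B$ under $t_i\mapsto\lambda^{a_i}t_i$, so $\lambda^{l}(\lambda\cdot\omega)$ has coefficients $u_i(\lambda^{a_i}t_i)\to u_i(0)$. In the opposite direction the claim genuinely fails: for $\un c=(4)$, $B=k+k(t^2+t^3)\subset k[t]/(t^4)$ would degenerate to $k+kt^3$, which contains $t^3$.
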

\begin{proof}
 First of all, the limit $B_o=\lim_{\lambda\to 0} \lambda\cdot B $ exists in $\Ter_{\un c}$ since $\Ter_{\un c}$ is proper. Consider the associated limit of curve singularities $\mc O(B_o)=\lim_{\lambda\to 0} \lambda\cdot \mc O(B)$ inside $k[[t_1]]\times \ldots \times k[[t_b]]$. 
Since $B\in \Ter_{\un c}^o$ by assumption, Lemma \ref{L:O-Gor} implies that $\mc O(B)$ is Gorenstein with multi-conductance equal to $\un c$. Then Lemma \ref{L:Gor}\eqref{L:Gor3} implies that $\omega_{\mc O(B)}$ is generated by a Rosenlicht differential of the form 
$$
     \omega=\left(u_1\frac{dt_1}{t_1^{c_1}}, \ldots, u_b \frac{dt_b}{t_b^{c_b}}\right), \text{ where } u_i\in k[[t_i]]^*.
     $$
Since $\bb G_m$ acts on each $\frac{dt_i}{t_i^{c_i}}$ by 
$$\lambda\cdot \frac{dt_i}{t_i^{c_i}}=\lambda^{-a_i(1-c_i)} \frac{dt_i}{t_i^{c_i}}=\lambda^{l}\frac{dt_i}{t_i^{c_i}}, $$
the limit of $\omega$ as a Rosenlicht differential in  $\omega_{\mc O(B_o)}$ is equal to 
$$
\omega_o=\lim_{\lambda\to 0} \lambda\cdot \omega=\left(u_1(0)\frac{dt_1}{t_1^{c_1}}, \ldots, u_b(0) \frac{dt_b}{t_b^{c_b}}\right)\in \omega_{\mc O(B_o)}.
$$
Therefore, Lemma \ref{L:omega}\eqref{L:omega1} implies that the multi-conductance of $\mc O(B_o)$ satisfies
$$\un c(\mc O(B_o))\geq \un c,$$
with respect to the pointwise order relation. 
However, $\un c(\mc O(B_o))\leq \un c$ by construction (see \eqref{E:Ter-O}), and hence $\un c(\mc O(B_o))=\un c$. This implies, by Lemma \ref{L:O-Gor}, that $B_o\in \Ter_{\un c}^o$, as desired. 
\end{proof}

\begin{corollary}\label{C:lim-O}
Let $\mc O$ be a non-nodal Gorenstein curve singularity with normalization $\mc O^n=k[[t_1]]\times \ldots \times k[[t_b]]$.
Then $\mc O_o:=\lim_{\lambda\to 0} \lambda\cdot \mc O \subset k[[t_1]]\times \ldots \times k[[t_b]]$ exists and it is a Gorenstein curve singularity with $\bb G_m$-action and with $\un c(\mc O_o)=\un c(\mc O)$. 
\end{corollary}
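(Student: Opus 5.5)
The plan is to reduce to Proposition \ref{P:lim-Ter} via the territory parametrization \eqref{E:Ter-O}. Let $\un c=\un c(\mc O)$. Since $\mc O$ is non-nodal and Gorenstein, Lemma \ref{L:Gor}\eqref{L:Gor2} shows that $\mc O$ is indecomposable. Lemma \ref{L:ci}\eqref{L:ci2} then gives $c_i\geq 2$ for all $i$, and Lemma \ref{L:Gor}\eqref{L:Gor1} gives $|\un c|=2\delta(\mc O)$. Hence $\un c\in \bb N^b_{\geq 2}$ satisfies the hypotheses used to define $\Ter_{\un c}$, and in particular $l=\lcm(c_i-1)>0$ and the action \eqref{E:actGm} is well defined.

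Next set $B:=\pi(\mc O)\subset A_{\un c}$. Since the conductor $(t_1^{c_1})\times\ldots\times(t_b^{c_b})=\ker\pi$ is contained in $\mc O$, we have $\mc O=\pi^{-1}(B)=\mc O(B)$. Moreover $B$ is local, being a quotient of the local ring $\mc O$. Its codimension in $A_{\un c}$ equals the colength of $\mc O$ in $\mc O^n$, namely $\delta$. So $B\in\Ter_{\un c}(k)$, and Lemma \ref{L:O-Gor} (Gorenstein with $\un c(\mc O(B))=\un c$) gives $B\in\Ter^o_{\un c}(k)$.

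By Proposition \ref{P:lim-Ter}, the limit $B_o=\lim_{\lambda\to0}\lambda\cdot B$ exists and lies in $\Ter^o_{\un c}$. The map \eqref{E:Ter-O} is $\bb G_m$-equivariant and is given by $B\mapsto\pi^{-1}(B)$, so we define $\mc O_o:=\mc O(B_o)$. One checks that $\mc O_o$ is the limit $\lim_{\lambda\to 0}\lambda\cdot\mc O$ inside $k[[t_1]]\times\ldots\times k[[t_b]]$: all the $\lambda\cdot\mc O$ contain the fixed $\bb G_m$-stable ideal $\ker\pi$, so the limit is determined by the limit in the finite-dimensional quotient $A_{\un c}$, i.e. in the Grassmannian-type scheme $\Ter_{\un c}$.

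Lemma \ref{L:O-Gor} applied to $B_o$ shows that $\mc O_o$ is Gorenstein with $\un c(\mc O_o)=\un c$. Finally, $B_o$ is a $\bb G_m$-fixed point, being a limit of an orbit, so $\mc O_o=\pi^{-1}(B_o)$ is $\bb G_m$-stable and inherits the action \eqref{E:actGm}. The only mildly delicate step is the identification of the limit of subalgebras of the power series ring with the limit in $\Ter_{\un c}$. I expect this to be straightforward precisely because of the fixed common ideal $\ker\pi$.
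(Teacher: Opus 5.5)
Your proposal is correct and follows the paper's argument. The paper's proof simply invokes Proposition \ref{P:lim-Ter} together with Lemma \ref{L:O-Gor}; you additionally spell out the routine points it leaves implicit, namely that $B=\pi(\mc O)$ lies in $\Ter^o_{\un c}$, and that the limit of $\lambda\cdot\mc O$ is $\pi^{-1}(B_o)$ because the conductor $\ker\pi$ is $\bb G_m$-stable and contained in every $\lambda\cdot\mc O$.
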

\begin{proof}
 It follows from Proposition \ref{P:lim-Ter} and Lemma \ref{L:O-Gor}.  
\end{proof}

\subsection{Global case}\label{Sub:global}

The aim of this subsection is to globalize the results of \S \ref{Sub:local} by showing that any Gorenstein non-nodal projective curve can be isotrivially degenerated to a  Gorenstein projective curve with a $\bb G_m$-action. 

First of all, we need to recall a few facts from \cite{AFS16} on how to construct a global (affine or projective) Gorenstein curve with $\bb G_m$-action starting from a Gorenstein curve singularity with $\bb G_m$-action. More precisely, consider a non-nodal Gorenstein curve singularity $\mc O$ with $\bb G_m$-action and multi-conductance $\un c:=\un c(\mc O)=(c_1,\ldots, c_b)\in \bb N_{\geq 2}^b$ and normalize the action of $\bb G_m$ on $\mc O$ as in \eqref{E:Gm-O}. Then it is shown in \cite[Prop. 2.4]{AFS16} that there exists a unique affine pointed $k$-curve $X(\mc O)=(X(\mc O),p)$ (which we will call the \emph{$\mc O$-atom}) with a faithful action of $\bb G_m$ such that:

$\bullet$ The action of $\bb G_m$ is free outside $p$ and $X(\mc O)-p$ is smooth;

$\bullet$ The completion $\wh{\mc O}_{X(\mc O), p}$ is $\bb G_m$-equivariant isomorphic to $\mc O$.

The affine curve $X(\mc O)$ has $b:=b(\mc O)$ irreducible components $\{E_1,\ldots, E_b\}$, each of which passes through $p$ and having pointed normalization isomorphic to $(\bb A^1,0)$, and such that the action of $\bb G_m$ on each irreducible component $E_i$ is induced by the scalar multiplication of weight $-a_i$   on $(\bb A^1,0)$, where the $a_i$ are defined as in \eqref{E:Gm-O}. Therefore, we can compactify $X(\mc O)$ by adding a point $q_i$ at infinity on $E_i$: we obtain a connected projective $b(\mc O)$-pointed curve $\{\ov{X}(\mc O), q_i\}$, called the \emph{projective $\mc O$-atom}, together with a faithful action of $\bb G_m$ such that:

$\bullet$ $p\in X(\mc O)\subset \ov{X}(\mc O)$ is the unique singular point of $\ov{X}(\mc O)$ and the completion $\wh{\mc O}_{\ov{X}(\mc O), p}$ is $\bb G_m$-equivariant isomorphic to $\mc O$.

$\bullet$ $\ov{X}(\mc O)$ has $b$ irreducible components $\{\ov E_1,\ldots, \ov E_b\}$, each passing through $p$ and with $q_i\in E_i$, and such that the normalization $(\ov E_i,p,q_i)$ is isomorphic to $(\bb P^1, 0,\infty)$ and the action of $\bb G_m$ on $\ov E_i$ is induced by the scalar multiplication of weight $-a_i$ on $(\bb P^1, 0,\infty)$. 

$\bullet$ $\ov{X}(\mc O)$ has arithmetic genus equal to $g(\mc O)$.

$\bullet$ $\ov{X}(\mc O)$ is smoothable if and only if $\mc O$ is smoothable. 

Moreover, for any subset $ S\subsetneq \{1,\ldots, b\}$, we denote by $\ov X(\mc O)^S:=(\ov X(\mc O),\{q_i\}_{i \not \in S})$ the connected projective curve obtained from $(\ov X(\mc O),\{q_i\})$ by forgetting the marked points belonging to the branches corresponding to $S$ and keeping the other marked points. The curve $\ov X(\mc O)^S$ is called the \emph{$S$-dangling projective $\mc O$-atom} because the branches corresponding to $S$ "dangle", i.e. they do not have any marked point.  Observe that $\ov X(\mc O)^\emptyset=\ov X(\mc O)$.

\begin{lemma}\label{L:can-atom}
  The log-canonical line bundle of $\ov X(\mc O)^S:=(\ov X(\mc O),\{q_i\}_{i \not \in S})$ has multi-degree equal to
  $$
  \deg_{E_i}(\omega_{\ov{X}(\mc O)}(\sum_{i\not\in S} q_i))=
  \begin{cases}
      c_i-1 & \text{ if } i\not \in S,\\
      c_i-2 &\text{ if } i\in S.
  \end{cases}
  $$
  In particular, $\ov X(\mc O)^S:=(\ov X(\mc O),\{q_i\}_{i \not \in S})$ is G-stable  if and only if $c_i\geq 3$ for any $i\in S$.
\end{lemma}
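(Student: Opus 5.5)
Write $\nu:\wt X=\bigsqcup_{i=1}^b \ov E_i\to \ov X(\mc O)$ for the normalization, with each $\ov E_i\cong \bb P^1$ containing $\nu^{-1}(p)\cap \ov E_i=\{0\}$ and $q_i=\infty$. The plan is to compute the degree of $\omega_{\ov X(\mc O)}$ on each component by pulling it back to $\wt X$, and then add the contribution of the marked points.

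The key step is to identify $\nu^*\omega_{\ov X(\mc O)}$. Away from $p$ the curve is smooth, so $\nu^*\omega$ agrees with $\omega_{\ov E_i}$ there. Near $p$, the completion is $\mc O$, and Lemma \ref{L:Gor}\eqref{L:Gor4} gives
$$\omega_{\mc O}\otimes_{\mc O}\mc O^n=\bigoplus_i k[[t_i]]\frac{dt_i}{t_i^{c_i}}.$$
Hence, on each component,
$$\nu^*\omega_{\ov X(\mc O)}|_{\ov E_i}\cong \omega_{\ov E_i}(c_i\cdot 0)\cong \mc O_{\bb P^1}(-2+c_i).$$
We could also see this directly from the global generator $\omega_0=(K_i\,dt_i/t_i^{c_i})$: it is a rational section of $\omega$ that is regular and nonvanishing on $X(\mc O)$. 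On $\ov E_i$ the coordinate at $\infty$ is $s=1/t_i$, and $dt_i/t_i^{c_i}=-s^{c_i-2}ds$ has a zero of order $c_i-2$ at $q_i$. This again gives degree $c_i-2$.

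Since $\deg_{E_i}$ of a line bundle on the reduced curve equals the degree of its pullback to $\ov E_i$, we get $\deg_{E_i}\omega_{\ov X(\mc O)}=c_i-2$. Adding $q_i$ for $i\notin S$ (note $q_i$ lies only on $E_i$) raises the degree by one for those $i$. This yields the stated values $c_i-1$ for $i\notin S$ and $c_i-2$ for $i\in S$.

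For the final claim, recall that $\ov X(\mc O)^S$ is reduced, connected, Gorenstein and projective, with smooth distinct marked points. So G-stability reduces to ampleness of the log-canonical bundle. On a projective curve, a line bundle is ample if and only if it has positive degree on every irreducible component, so ampleness is equivalent to all the degrees above being positive. Since $\mc O$ is non-nodal, Lemmas \ref{L:ci}\eqref{L:ci2} and \ref{L:Gor}\eqref{L:Gor2} give $c_i\geq 2$ for all $i$. Therefore $c_i-1\geq 1$ automatically, and the condition becomes $c_i-2>0$, i.e. $c_i\ge 3$, for $i\in S$.

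The only delicate point is the local identification of $\nu^*\omega$ at $p$. This is handled by Lemma \ref{L:Gor}\eqref{L:Gor4}, or alternatively by the explicit generator together with its behaviour at $\infty$.
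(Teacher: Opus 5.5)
Your proof is correct and follows essentially the same route as the paper. The paper says only that the formula follows by adjunction from Lemma~\ref{L:Gor}\eqref{L:Gor4} and the rationality of the components, and your identification $\nu^*\omega_{\ov X(\mc O)}|_{\ov E_i}\cong\omega_{\bb P^1}(c_i\cdot 0)$ is exactly that adjunction step. Your check via $dt_i/t_i^{c_i}=-s^{c_i-2}ds$ at $q_i$, and your ampleness argument for the ``in particular'' part using $c_i\ge 2$, just make explicit what the paper leaves implicit.
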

\begin{proof}
  This follows by adjunction from Lemma \ref{L:Gor}\eqref{L:Gor4} and the fact that each $E_i$ is a rational curve.
\end{proof}

We denote by $\chi_i^{log}(\mc O^S)$ the character of $\bb G_m$ on 
$$
H^0(\ov X(\mc O)^S,\omega_{\ov X(\mc O)^S}(\sum_i q_i)^{\otimes i})
$$ 
for any $i\geq 1$, as in \cite[Sec. 3.1]{AFS16} and \cite[Sec. 2.3]{CY}. 
Note  that $\chi_i^{log}(\mc O^S)>0$  because of the action \eqref{E:Gm-O}.

One can perform similar constructions using the opposite curve singularity $\Opp$: we will denote the corresponding (projetive or dangling) atoms by $X(\Opp)$,  $\ov X(\Opp)$ and $\ov X(\Opp)^S$.
Similarly, we denote by $\chi_i^{log}(\Opp^S)$ the character of $\bb G_m$ on 
$$
H^0(\ov X(\Opp)^S,\omega_{\ov X(\Opp)^S}(\sum_i q_i)^{\otimes i})
$$ 
for any $i\geq 1$, and observe that we have 
$$
\chi_i^{log}(\Opp^S)=-\chi_i^{log}(\mc O^S).
$$

The following theorem is the main result of this section.

\begin{theorem}\label{T:deg-sing}
 Let $(C,p_i)\in \mc U_{g,n}$ having a singular point $q\in C$ which is not a node. Consider the 
  Gorenstein (non-nodal) curve singularity $\mc O$ with $\bb G_m$-action obtained from $\wh{\mc O}_{C,q}$ as in Corollary \ref{C:lim-O}. Then there exists  a test configuration $(\mc C,\sigma_i)\to \bb A^1$ in $\mc U_{g,n}$ for $(C,p_i)$ such that 
 \begin{enumerate}[(1)]
     \item \label{T:deg-sing1} $(\mc C_0,\sigma_i(0))$ is obtained from the pointed normalization $\nu: (C,p_i)^{\nu}\to (C,p_i)$ at $p$ and $\ov{X}(\mc O)$ by gluing nodally the marked points $\{q_i\}$ of $\ov{X}(\mc O)$ with the points of $\nu^{-1}(q)$, and then contracting the subcurves where the log-canonical bundle is not ample.
\item \label{T:deg-sing2} The action of $\bb G_m$ on $\mc C_0$, induced by the test configuration, is trivial on the image of $(C,p_i)^{\nu}$ and it is the standard action on the complementary subcurve, which is an $S$-dangling projective atom  $\ov{X}(\mc O)^S$.
 \end{enumerate}
 In particular, this test configuration defines a morphism $T:\Theta\to \mc U_{g,n}$ such $T(1)=(C,p_i)$.
\end{theorem}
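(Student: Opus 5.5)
The plan is to build the test configuration by gluing a local isotrivial degeneration of $\wh{\mc O}_{C,q}$ with a global family obtained from a weighted blow-up of the trivial family on the normalization, as sketched in the introduction.

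\textbf{Step 1: the local family.} Fix the normalization $\wh{\mc O}_{C,q}^n = k[[t_1]]\times\cdots\times k[[t_b]]$, where the branches $t_i$ correspond to the points $\nu^{-1}(q)=\{y_1,\dots,y_b\}$. Corollary~\ref{C:lim-O} and the properness of $\Ter_{\un c}$ give a $\bb G_m$-equivariant family
\[
\mc O_{\bb A^1}\subset \prod_i k[[t_i]][s],
\]
where $\mc O_{\bb A^1}$ is the subalgebra generated by $\lambda\cdot\wh{\mc O}_{C,q}$ with $s=\lambda$, together with its flat limit $\mc O$ at $s=0$. Concretely, it is the Rees-type family obtained by substituting $t_i\mapsto s^{a_i}t_i$. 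This family is flat over $\bb A^1$, with fibre $\wh{\mc O}_{C,q}$ over $s\neq 0$ and $\mc O$ over $s=0$. Flatness holds because the $\delta$-invariant is constant, since we stay inside $\Ter_{\un c}$.

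\textbf{Step 2: the global family on the normalization.} Take the trivial family $(C,p_i)^\nu\times\bb A^1$ and perform at each point $(y_i,0)$ the weighted blow-up with weights $(a_i,1)$ in the local coordinates $(t_i,s)$. Its central fibre is $C^\nu\cup\bigcup_i \ov E_i$, where each $\ov E_i\cong\bb P^1$ is attached nodally at $y_i$. This blow-up may produce cyclic quotient singularities, which are harmless since the total space only needs to be $S_2$ with a Cartier central fibre. The open chart of the blow-up near the new exceptional curves has coordinate ring $k[t_i/s^{a_i},s]$; after the substitution $u_i=t_i/s^{a_i}$, the algebra $\mc O_{\bb A^1}$ becomes a subalgebra of $\prod_i k[u_i,s]$ with finite-colength quotient over each $s$. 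Its $s=0$ fibre is the atom $X(\mc O)$ sitting inside its normalization $\sqcup E_i$.

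\textbf{Step 3: the pushout.} Form the pushout, in the sense of Ferrand, of the blown-up family along the closed subscheme determined by the conductor $\prod_i (u_i^{c_i})$ relative to $\bb A^1$. This pinches the curves $E_i$ together according to $\mc O_{\bb A^1}$. The result $\mc C'\to\bb A^1$ is flat, proper and $\bb G_m$-equivariant. Its general fibre is $C$, because pinching $C^\nu$ at the $y_i$ according to $\wh{\mc O}_{C,q}$ recovers $C$. Its central fibre is $C^\nu\cup\ov X(\mc O)$, glued nodally at $q_i\sim y_i$. The $\bb G_m$-action is trivial on $C^\nu$ and is the standard atom action on $\ov X(\mc O)$. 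The central fibre is Gorenstein and reduced, since it has only nodes and the singularity $\mc O$.

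\textbf{Step 4: stabilization.} Contract the components on which $\omega^{log}$ is not ample by passing to the relative log-canonical model
\[
\mc C=\Proj_{\bb A^1}\bigoplus_m f_*\big(m(K_{\mc C'/\bb A^1}+\textstyle\sum\sigma_i)\big).
\]
By Lemma~\ref{L:can-atom}, the atom components always have positive degree, because $c_i\ge 2$ and each $E_i$ meets $C^\nu$. So the only contracted components are rational pieces of $C^\nu$ that become unstable after normalization. These are smooth rational components meeting the rest of the curve in at most two points, with a marked point counted as one. Contracting such a component $E_j$ attached at $q_j$ makes that branch dangle, which yields $\ov X(\mc O)^S$ with $S$ the set of contracted branches. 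If two branches are joined by a rational bridge, the contraction creates a node between them, and I expect this to be absorbed into the description ``contracting the subcurves''.

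\textbf{Main obstacle.} The hard part is checking that the contraction stays Gorenstein and flat with a reduced central fibre, and that the general fibre is unchanged. The latter holds because $\omega^{log}_C$ is already ample, so the contraction is an isomorphism away from $0$. The former requires $K+\sum\sigma_i$ to be semiample and the central fibre of the model to be the contracted curve. I would first try to check this directly: Cartier-ness of the relative dualizing sheaf persists because $\omega^{log}$ is trivial on the contracted chains, so the contraction can be performed in families as in the stable reduction process. The final assertion that the family defines a morphism $T:\Theta\to\mc U_{g,n}$ then follows from the remark after Definition~\ref{D:Ugalpha}.
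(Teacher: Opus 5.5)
\textbf{Steps 1--3.} These are the paper's proof. The paper blows up the ideals $(x_i,t^{a_i})$ at the points $(q_i,0)$ of $\wt C\times\bb A^1$. It identifies the strict transform of the conductor subscheme with $\Spec(A_{\un c}\otimes k[t])$, and uses Proposition~\ref{P:lim-Ter} to spread $B$ out to a $\bb G_m$-equivariant family $\mc B$ in $\Ter^o_{\un c}$ over $\bb A^1$. It then forms the pushout, which is flat and commutes with base change. Your chart $u_i=t_i/s^{a_i}$ is a useful addition: it makes explicit that the pinching data in the blow-up degenerate to the lowest-weight (initial-form) limit of $B$, which is the Gorenstein one. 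The paper's proof stops after the pushout and simply asserts (1)--(2). It never carries out the contraction, so your Step~4 addresses something the paper leaves implicit.

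\textbf{Step 4 as written does not work.} The curves that must be contracted include connected components $\wt Y\cong\bb P^1$ of $C^\nu$ containing a single $y_i$ and no marking. On such a tail, $\omega^{\log}$ of the central fibre has degree $-1$, not $0$. Consequently:
\begin{itemize}
\item your claim that $\omega^{\log}$ is trivial on the contracted chains is false;
\item $K_{\mc C'/\bb A^1}+\sum\sigma_i$ is not relatively nef;
\item nothing in your sketch controls the central fibre of the relative $\Proj$ on the non-normal surface $\mc C'$ (reducedness, Gorenstein property).
\end{itemize}
You also need that, after contracting such a tail, the branch $E_i$ keeps positive degree $c_i-2$. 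This requires $c_i\ge 3$, which follows from G-stability of $(C,p_i)$, since $\deg\omega_C$ on $\nu(\wt Y)$ equals $c_i-2$ by Lemma~\ref{L:Gor}\eqref{L:Gor4}.

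\textbf{A cleaner route.} Contract before pushing out. The unstable curves are smooth rational components of $C^\nu$ with at most two special points, at least one of them a $y_i$. They lie in the reduced central fibre of $\wt{\mc C}$, away from the strict transform of the conductor. Near each of them the family is toric; if the other special point is a node, first separate the branches and reglue the resulting sections afterwards.
\begin{itemize}
\item For a tail, the fan with rays $(1,0),(a_i,1),(0,1),(-1,0)$ loses the ray $(0,1)$. The remaining cone $\langle (a_i,1),(-1,0)\rangle$ is smooth, so that component becomes a $\bb P^1$-bundle whose central fibre is $E_i$, now a dangling branch.
\item For a bridge between two branches, one gets an $A_n$-point at which $t=0$ is a node.
\end{itemize}
The pushout is then formed exactly as before, and its central fibre is reduced, Gorenstein, with ample $\omega^{\log}$.
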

Note that $\un c(\mc O)=\un c(\wh{\mc O}_{C,q})$, which implies that $\mc O$ and $\wh{\mc O}_{C,q}$ have the same delta-invariant, the same number of branches and the same genus. Moreover, the stable $S$-dangling projective atom $\ov X(\mc O)^S$ appearing in the central fiber $\mc C_0$ is such that the dangling branches $i\in S$ correspond to the branches of $\mc O$ which are contained in a rational $1$-pointed connected component of $(C^{\mu}, \nu^{-1}(q))$ (and this, by the G-stability of $(C,p_i)$, may occur only when $c_i\geq 3$).

\begin{proof}
Let $\nu:\wt C\to C$ be the normalization of $C$ at the singular point $q$ and let $\nu^{-1}(q)=\{q_1,\ldots, q_b\}$. 
Consider the conductor ideal sheaf $\mathfrak{c}(\nu):=\Ann_{\mc O_C}(\nu_*\mc O_{\wt C}/\mc O_C)$ of $\nu: \wt C \to C$. Since $\mathfrak{c}(\nu)$ is an ideal sheaf both in $\mc O_C$ and in $\mc O_{\wt C}$, then $\mathfrak{c}(\nu)$ defines two $0$-dimensional subschemes $D\hookrightarrow C$ and $\wt D\hookrightarrow \wt C$ supported, respectively, on $q$ and $\nu^{-1}(q)$. Since $(C,p)$ is Gorenstein and non-nodal, we have that (see Lemmas \ref{L:ci} and \ref{L:Gor}) $D$ has length equal to the delta invariant $\delta(C,q):=l(\nu_*\mc O_{\wt C}/\mc O_C)$ of the singularity $(C,q)$, while $\wt D=c_1q_1+\ldots +c_b q_b$ for $\un c(\wh{\mc O}_{C,q})=(c_1,\ldots, c_b)\in \bb N_{\geq 2}^b$ has length equal to $\sum_{i=1}^b c_i=2\delta(C,q)$.
As in subsection \ref{Sub:local}, we set
$$
\begin{aligned}
 & l:=\lcm(c_1-1,\ldots,c_b-1),\\
 & (a_1,\ldots,a_b):=\left(\frac{l}{c_1-1},\ldots,\frac{l}{c_b-1} \right).
\end{aligned}
$$
It is well-known that we have a push-out diagram (see e.g. \cite[Lemma 1.2]{vdW} or \cite[Lemma B.0.4]{Per})
\begin{equation}\label{E:push-out}
\begin{tikzcd}
  \wt D=\nu^{-1}(D) \arrow[hook, r]  \arrow[d]  & \wt C \arrow[d, "\nu"']   \\
      D  \arrow[hook, r]&  C \arrow[ul, phantom, "\ulcorner", very near start]
\end{tikzcd}
\end{equation}
Moreover, $\wt C$ is canonically isomorphic to the blow-up of $C$ at $D$ and $\wt D$ is the exceptional divisor of the blow-up (see \cite[Prop. B.0.5]{Per}).
In terms of the notation of subsection \ref{Sub:local}, we have that 
$$\begin{aligned}
& \wt D=\Spec A_{\un c} \text{ where } A_{\un c}:=\frac{k[t_1]}{(t_1^{c_1})}\times \ldots \times \frac{k[t_b]}{(t_b^{c_b})},\\
& D=\Spec B \text{ for some } B\in \Ter_{\un c}^o.
\end{aligned}$$ 

Consider now the morphism $b: \wt{\mc C}\to \wt C\times \bb A^1$ obtained by blowing-up  each point $(q_i,0)$ with weights $(a_i,0)$ and let $\wt{\mc D}\hookrightarrow \wt{\mc C}$ be the strict transform of $\wt D\times \bb A^1\hookrightarrow \wt C\times \bb A^1$.
Explicitly, if we denote by $t$ a base parameter of $\bb A^1$ at $0$ and by $x_i$ a fiber parameter at each of the point $q_i$, $b$ is obtained by blowing-up the ideal $(x_i,t^{a_i})$ for each $1\leq i \leq b$. Therefore, in the local coordinates $(x_i,t)$ around $(q_i,0)$, the surface $\wt{\mc C}$ is given the equation $\{u_ix_i-v_it^{a_i}=0\}$ inside $\bb A^2_{x_i,t}\times \bb P^1_{u_i,v_i}$. 
The exceptional divisor $E_i\cong \bb P^1$ of $b$ over $(q_i,0)$ has equation $\{x_i=t=0\}$ and it is attached to the strict transform of $\wt C\times \{0\}$ at the point $q_i^{\infty}:=(0,0,[0,1])$ where the surface $\wt{\mc C}$ has local equation $\displaystyle \left\{\frac{u_i}{v_i}x_i=t^{a_i}\right\}$. The divisor $\wt{\mc D}$ does not intersect the strict transform of $\wt C\times \{0\}$ while its restriction to the exceptional divisor $E_i$ is equal to $c_i[1,0]:=c_iq_i^0$. The group $\bb G_m$ acts on $\wt{\mc C}$ by acting locally via 
$$
\lambda\cdot t=\lambda t, \quad \lambda\cdot x_i=x_i, \quad \lambda \cdot u_i=\lambda^{a_i}u_i, \quad \lambda \cdot v_i=v_i.
$$
In particular, if we identify $(E_i,q_i^0,q_i^{\infty})$ with $(\bb P^1,0,\infty)$, the action of $\bb G_m$ on $E_i$ is induced by the scalar multiplication of weight $-a_i$ on $(\bb P^1,0,\infty)$. Summing up, we have a $\bb G_m$-equivariant diagram  
\begin{equation}\label{E:diagA}
\begin{tikzcd}
  \wt{\mc D} \arrow[hook, rr]  \arrow[dr]  & & \wt{\mc C} \arrow[dl]   \\
      &  \bb A^1 \arrow[ur, bend right=30, "\wt{\sigma}_i"']& 
\end{tikzcd}
\end{equation}
where the two vertical downward arrows are proper and flat, the $\sigma_i$ are the sections which are the inverse image via $(\nu\times \id)\circ b$ of the constant sections $p_i\times \bb A^1$ of $C\times \bb A^1\to \bb A^1$, and the restriction of the closed embedding $\wt{\mc D}\hookrightarrow \wt{\mc C}$ to $\bb A^1\setminus\{0\}$ is $\bb G_m$-equivariantly isomorphic to $\wt D\times (\bb A^1\setminus \{0\})\hookrightarrow \wt C\times (\bb A^1\setminus \{0\})$.

From the above construction, it follows that the $\bb G_m$-equivariant morphism $\wt {\mc D}\to \bb A^1$ is isomorphic to 
$$
\wt{\mc D}=\Spec (A_{\un c}\otimes k[t])\to \bb A^1=\Spec k[t],$$
where $A_{\un c}:=\frac{k[t_1]}{(t_1^{c_1})}\times \ldots \times \frac{k[t_1]}{(t_1^{c_1})}$ and the action of $\bb G_m$ is given by 
$$
\lambda \cdot t=\lambda t \text{ and } \lambda\cdot t_i=\lambda^{-a_i}t_i.
$$
We now apply Proposition \ref{P:lim-Ter}, in order to get  $\mc B\in \Ter_{\un c}^o([\bb A^1/\bb G_m])$ such that $\mc B_1=B$ and $\mc B_0=B_o$. By defining $\mc D=\Spec \mc B$, we get an extension of the diagram \eqref{E:diagA} to a diagram 
\begin{equation}\label{E:diagA-com}
\begin{tikzcd}
  \wt{\mc D} \arrow[hook, rr]  \arrow[d]  & & \wt{\mc C} \arrow[ddl]   \\
  \mc D \arrow[dr] && \\
      &  \bb A^1 \arrow[uur, bend right=30, "\wt{\sigma}_i"']& 
\end{tikzcd}
\end{equation}
such that 
\begin{enumerate}[(i)]
    \item \label{E:diagA-com1} the maps $\wt{\mc D}\to \mc D$ and $\mc D\to \bb A^1$ are proper and flat; 
    \item \label{E:diagA-com2} there exists an action of $\bb G_m$ on $\mc D$ making the diagram \eqref{E:diagA-com} $\bb G_m$-equivariant;
    \item \label{E:diagA-com3} the restriction of \eqref{E:diagA-com} to $ 1\in \bb A^1$ is isomorphic to 
    \begin{equation}\label{E:diagA-1}
\begin{tikzcd}
  \wt D \arrow[hook, rr]  \arrow[d]  & & \wt{C}\arrow[ddl]   \\
  D \arrow[dr] && \\
      &  1=\Spec k \arrow[uur, bend right=30, "\nu^{-1}(p_i)"']& 
\end{tikzcd}
\end{equation}
\item \label{E:diagA-com4} $\mc D_0=\Spec B_o$, as in Proposition \ref{P:lim-Ter}.

\end{enumerate}
Since $\wt{\mc D}, \wt{\mc C}, \mc D$ are flat and proper over $\bb A^1$ (by property \eqref{E:diagA-com1}), we can  form the push-out of the diagram \eqref{E:diagA-com} (by \cite[Lemma B.0.1]{Per}), in order to obtain
\begin{equation}\label{E:diagA-push}
\begin{tikzcd}
  \wt{\mc D} \arrow[hook, rr]  \arrow[d]  & & \wt{\mc C} \arrow[d]   \\
  \mc D \arrow[dr] \arrow[hook, rr] && \mc C \arrow[dl] \arrow[ull, phantom, "\ulcorner", very near start] \\
      &  \bb A^1 \arrow[ur, bend right=30, "\sigma_i"'] & 
\end{tikzcd}
\end{equation}
with the property that $\mc C\to \bb A^1$ is flat and proper, and the above diagram \eqref{E:diagA-push} commutes with base change over $\bb A^1$. Moreover, since the diagram \eqref{E:diagA-com} is $\bb G_m$-equivariant (by property \eqref{E:diagA-com2}), it follows from the universal property of the push-out that there exists an action of $\bb G_m$ on $\mc C$ making the diagram \eqref{E:diagA-push} $\bb G_m$-equivariant. Property \eqref{E:diagA-com3} implies that the restriction of \eqref{E:diagA-push} to $1\in \bb A^1$ is  isomorphic to the diagram \eqref{E:push-out}.
Finally, property \eqref{E:diagA-com4} and the construction of the diagram \eqref{E:diagA-push} imply that $\mc C_0$ satisfies the properties \eqref{T:deg-sing1} and \eqref{T:deg-sing2}.  
\end{proof}

\begin{corollary}\label{C:deg-sing} 
Consider the test configuration $T$ of Theorem \ref{T:deg-sing}. Then the weight of $K+\psi +\alpha (\delta-\psi)$ on $T$ is equal to 
$$
\wei_T(K+\psi +\alpha (\delta-\psi))=(\alpha-2)\chi_2^{\log}(\mc O^S)+(13-13\alpha) \chi_1^{\log}(\mc O^S)=:w_{\alpha}(\mc O^S).
$$
\end{corollary}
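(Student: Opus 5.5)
The plan is to compute the weight directly from the fibre $T(0)=(\mc C_0,\sigma_i(0))$, using the expression \eqref{E:Kdelta}:
$$K+\psi+\alpha(\delta-\psi)=(2-\alpha)\wh\lambda_2+(13\alpha-13)\lambda.$$
By additivity of $\wei_T$, it suffices to compute $\wei_T(\wh\lambda_2)$ and $\wei_T(\lambda)$. Each is the weight of the $\bb G_m$-action on a determinant of cohomology of the central fibre.

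First, $\wh\lambda_2=\det R\pi_*(\wh\omega_\pi^{\otimes 2})$. By property \ref{prop1} in the proof of Proposition \ref{P:Ugn-finite}, $H^1$ vanishes, so $\wei_T(\wh\lambda_2)$ is the weight of $\det H^0(\mc C_0,\omega^{log}_{\mc C_0}{}^{\otimes 2})$. Next, $\lambda=\det R\pi_*\mc O$. By Serre duality $H^1(\mc O)^\vee=H^0(\omega_{\mc C_0})$, and $H^0(\mc O)=k$ has weight $0$. Hence $\wei_T(\lambda)$ equals the weight of $\det H^0(\mc C_0,\omega_{\mc C_0})$, with the sign fixed by the duality and the convention in \eqref{E:mu-L}. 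One could instead use $\wh\lambda_1=\lambda$ from \eqref{E:Mum-for}.

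Second, I decompose these spaces along the structure of $\mc C_0$ from Theorem \ref{T:deg-sing}. Write $\mc C_0=Y\cup \ov X(\mc O)^S$, where $Y$ is the image of $(C,p_i)^\nu$ with its rational dangling tails contracted, and $\bb G_m$ acts trivially on $Y$. The two pieces meet nodally at the points $q_i$ with $i\notin S$.

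For $m=2$, I use the restriction sequence
$$0\to H^0\big(\ov X(\mc O),\omega^{\otimes 2}(\textstyle\sum_{i\notin S} 2q_i)(-\sum_{i\notin S}q_i)\big)\to H^0(\mc C_0,(\omega^{log})^{\otimes 2})\to H^0\big(Y,(\omega_Y^{log})^{\otimes 2}\big)\to 0.$$
Here $Y$ is given the nodes as log points. Its $H^0$ is $\bb G_m$-trivial, and surjectivity follows from $H^1$-vanishing as in Proposition \ref{P:Ugn-finite}. Twisting by $\omega^{log}$ along the nodes gives the sub-line-bundle on the atom $\omega(\sum_{i\notin S}q_i)^{\otimes 2}(-\sum q_i)$. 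I will have to check, via local generators at the fixed points $q_i$ and $\infty$-weights, that this character coincides with $\chi_2^{log}(\mc O^S)$ as defined in \cite[Sec. 3.1]{AFS16} and \cite[Sec. 2.3]{CY}. I will follow their convention: for $i\ge 2$, $\chi_i^{log}$ is computed on exactly this kernel.

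Analogously, $H^0(\omega_{\mc C_0})$ splits with trivial contribution from $Y$. The nontrivial part is $H^0(\ov X(\mc O),\omega(\sum_{i\notin S}q_i))$ modulo residue constraints at the nodes, whose character should be $\chi_1^{log}(\mc O^S)$. Since $\bb G_m$ acts trivially on the nodes, the residue conditions carry weight zero.

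Finally I combine. With the paper's sign normalization, the weight of a determinant is minus the character sum. This yields
$$\wei_T=-(2-\alpha)\chi_2^{log}-(13\alpha-13)\chi_1^{log}=(\alpha-2)\chi_2^{log}(\mc O^S)+(13-13\alpha)\chi_1^{log}(\mc O^S).$$

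The main obstacle is bookkeeping. I must match the characters of the sections that vanish at the gluing points on the atom side precisely with the definition of $\chi_i^{log}(\mc O^S)$ in \cite{AFS16,CY}. This includes the case where the dangling rational tails have been contracted, and the sign conventions for $\wei$ and for the $\lambda$ term coming through duality. I would first verify both on the cusp, where $S=\emptyset$ and $\chi_1,\chi_2$ are known, to fix signs.
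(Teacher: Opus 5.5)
Your proposal is correct and follows the paper's argument. The paper's proof simply invokes \eqref{E:Kdelta}, the shape of $\mc C_0$ from Theorem \ref{T:deg-sing} (trivial action on the image of $(C,p_i)^{\nu}$, standard action on $\ov X(\mc O)^S$) and the sign convention \eqref{E:mu-L}; you spell out the splitting into a $\bb G_m$-trivial part and the atom part. The check you defer is immediate and needs no special convention from the references. At each $\bb G_m$-fixed node $q_i$, the fibre of $\omega_{\ov X(\mc O)}(\sum_{j\notin S}q_j)^{\otimes m}$ is spanned by the invariant element $(ds/s)^{\otimes m}$, where $s$ is a local coordinate at $q_i$ on $\ov E_i$. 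Hence, given the $H^1$-vanishing you already use, neither the twist by $-\sum_{i\notin S}q_i$ nor the weight-zero residue conditions change the character, which is therefore exactly $\chi_m^{\log}(\mc O^S)$ as the paper defines it.
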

\begin{proof}
This follows from the description of the central fiber of the test configuration $T$ in Theorem \ref{T:deg-sing}, together with formula \ref{E:Kdelta} and our convention on the definition \eqref{E:mu-L} of $\wei_T(-)$. 
\end{proof}

\subsection{The Chen-Yu construction}\label{Sub:CY}

Our construction of a test configuration starting from a non-nodal Gorenstein singularity as in Theorem \ref{T:deg-sing} is the counterpart of a construction of Chen-Yu \cite{CY}, which we now generalize in the following

\begin{theorem}\label{T:deg-CY}
 Let $(\wh{\mc C},\wh{\sigma}_i)$ be a test configuration in $\mc U_{g,n}$ and let $\wh T:\Theta\to \mc U_{g,n}$ the associated morphism. Suppose that the central fiber $(C:=\wh{\mc C}_0, p_i:=\wh{\sigma}_i(0))$ has a connected subcurve $Z$ meeting the complementary subcurve $Z^c$ in the nodes $\{q_1,\ldots, q_r\}$ and containing the marked points $\{q_{r+1}, \ldots, q_{s+r}\}$. Assume that  $Z$ and $Z^c$ are $\bb G_m$-invariant and that the nodes $\{q_1,\ldots, q_r\}$ are fixed by $\bb G_m$. Pick a regular differential $\omega$ on $Z$ such that its divisor of zeros is equal to
 \begin{equation}\label{E:div-om}
 \div(\omega)=\sum_{i=1}^b m_i q_i
 \end{equation}
  for some smooth non-marked points $\{q_{r+s+1},\ldots, q_b\}$ of $(C,p_i)$ contained in $Z$ and with the property that $m_i\geq 1$ for $r+s+1\leq i\leq b$. Then there exists a  Gorenstein singularity $\mc O$ with $\mathbb G_m$-action and multi-conductance $\un c(\mc O)=(m_1+2,\ldots, m_b+2)$ and a test configuration $(\mc C,\sigma_i)\to \bb A^1$ in $\mc U_{g,n}$ such that 
 \begin{enumerate}[(1)]
 \item \label{T:deg-CY0} $(\wh{\mc C},\wh{\sigma}_i)_{|\bb A^1-\{0\}}$ is $\bb G_m$-equivariantly isomorphic to $(\mc C,\sigma_i)_{|\bb A^1-\{0\}}$.
     \item \label{T:deg-CY1} $(\mc C_0,\sigma_i(0))$ is obtained by gluing the complementary subcurve $Z^c$ nodally with $\ov X(\Opp)$ at the points $\{q_1,\ldots, q_r\}$. 
\item \label{T:deg-CY2} The action of $\bb G_m$ on $\mc C_0$, induced by the test configuration, coincides with the original action on $Z^c$ inside $C$ and it is the standard action on the complementary subcurve, which is an $S$-dangling projective atom  $\ov{X}(\Opp)^S$ where $S$ correspond to the branches $\{r+s+1,\ldots, b\}$.
 \end{enumerate}
 In particular, this test configuration defines a morphism $T:\Theta\to \mc U_{g,n}$ such $T(1)=(\wh{ \mc C}_1,\wh{\sigma}_i(1))=\wh T(1)$.
\end{theorem}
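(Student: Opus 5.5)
Following Chen--Yu \cite[Thm. 2.6, Thm. 2.7]{CY}, the plan is to modify $\wh{\mc C}$ near $Z\subset \wh{\mc C}_0$ by a base change, a weighted blow-up and a contraction. The outcome should replace $Z$ by a rational configuration whose contraction yields the singularity $\Opp$. Since everything happens over $0$, property \ref{T:deg-CY0} is automatic.

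\textbf{Step 1 (the singularity).} From the orders $m_i$ in \eqref{E:div-om} set $c_i:=m_i+2\ge 2$. Let $l=\lcm(c_i-1)$ and $a_i=l/(c_i-1)=l/(m_i+1)$. Take $\mc O$ to be the Gorenstein singularity with $\bb G_m$-action of multi-conductance $\un c=(c_1,\ldots,c_b)$ obtained from the differential, as follows. Locally at $q_i$ write $\omega=u_i t_i^{m_i}dt_i$ and consider the ring of functions $f$ on the union of branches for which $\sum_i\Res(f\,\omega')=0$, with $\omega'=(K_i\,dt_i/t_i^{c_i})$ and $K_i$ determined by the leading coefficients $u_i(0)$. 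One must check via the Rosenlicht description \eqref{E:Ros} that this is a local ring of codimension $\delta=\sum c_i/2$ in $\prod k[[t_i]]$. Its conductor is then exactly $\prod(t_i^{c_i})$, since $\omega'$ has poles of exact order $c_i$, so it is Gorenstein by Lemma \ref{L:Gor}\eqref{L:Gor1}. Parity, i.e. $\sum c_i$ even, follows from $\sum m_i=2g(Z)-2$.

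\textbf{Step 2 (the test configuration).} First base change $t\mapsto t^{l}$ (or a suitable multiple), which replaces $\wh T$ by a multiple and keeps $T(1)=\wh T(1)$. Then perform a weighted blow-up of the total space along $Z\times\{0\}$ so that the normal direction to $Z$ acquires weight proportional to $\omega$. Concretely, the family of differentials $t^{-l}\omega$ degenerates the branches at $q_i$ to $\bb P^1$'s with action of weight $a_i$. This is the ``twisted differential'' picture: near $q_i$ the new components $E_i\cong\bb P^1$ carry the action $\lambda\cdot t_i=\lambda^{a_i}t_i$, opposite to \eqref{E:Gm-O}. Next contract the strict transform of $Z$ together with the attaching points of the $E_i$ to a single point, using a push-out exactly as in the proof of Theorem \ref{T:deg-sing}. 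That is, glue along the conductor subscheme $\sum c_i q_i^0$ inside $\bigsqcup E_i$ to $\Spec B$ with $B\in\Ter^o_{\un c}$ given by Step 1, via \cite[Lemma B.0.1]{Per}. The result is flat and proper, the $\bb G_m$-action descends by the universal property, and the central fibre is $Z^c\cup\ov X(\Opp)$ glued nodally at $q_1,\ldots,q_r$. Marked points $q_{r+1},\ldots,q_{r+s}$ go to the $\infty$ points of their branches, and the branches $r+s+1,\ldots,b$ dangle, giving \ref{T:deg-CY1} and \ref{T:deg-CY2}.

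\textbf{Step 3 (membership in $\mc U_{g,n}$).} Check that $\mc C_0$ is Gorenstein and reduced of genus $g$: $g(\mc O)=\delta-b+1=g(Z)$, since $\sum c_i=2g(Z)-2+2b$. Check also that $\omega^{log}$ is ample, using Lemma \ref{L:can-atom}: degree $c_i-1\ge 1$ on non-dangling branches and $c_i-2=m_i\ge1$ on dangling ones, with $Z^c$ unchanged. Ampleness is then relative ampleness over $\bb A^1$.

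The main obstacle is Step 2, namely making the contraction of $Z$ rigorous and identifying the resulting local ring with $\mc O$. The first thing to try is to reduce to the local Chen--Yu computation by replacing $Z$ with a formal neighbourhood, and to verify that the limit of the conductor subscheme algebra is the $\bb G_m$-fixed $B_o$ of Step 1. This mirrors Proposition \ref{P:lim-Ter} with the opposite action.
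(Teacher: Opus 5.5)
Your Step 2 fails as proposed; it is not merely the hard part.

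\textbf{The contraction.} The push-out of \cite[Lemma B.0.1]{Per} identifies points along a \emph{finite} map $\wt{\mc D}\to\mc D$ of $\bb A^1$-flat schemes, so it cannot collapse the curve $\wt Z$, and the central fibre would still contain $\wt Z$. Moreover, by flatness a pinching that is non-trivial over $0$ is non-trivial over every $t\neq 0$. It would therefore also create singularities in $\wh{\mc C}_t$, contradicting \ref{T:deg-CY0}. In Theorem \ref{T:deg-sing} this was harmless only because the general fibre $C$ is itself the push-out of its partial normalization along its conductor. Here $\wh{\mc C}_1$ has no singularity being degenerated, so there is no $B\in\Ter_{\un c}$ whose limit one could take, and your fallback via Proposition \ref{P:lim-Ter} has nothing to act on.

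The missing idea is to use $\omega$ to produce a contraction via a line bundle. Set $\ell=\lcm(m_i+1)$ and $a_i=\ell/(m_i+1)$. The paper blows up the \emph{points} $(q_i,0)$ along the ideals $(x_i,t^{a_i})$, with no base change and no blow-up along $Z\times\{0\}$, and sets $\mc L=\omega_\pi(\sum_j\wt\sigma_j+\ell\wt Z)$. Then:
\begin{enumerate}
\item[(a)] $\mc L|_{\wt Z}\cong\omega_Z(-\sum_i m_iq_i)$, which is trivial precisely because of $\omega$;
\item[(b)] $\mc L|_{E_i}$ has degree $m_i+1$, or $m_i$ on the dangling branches, which is where $m_i\geq 1$ is needed;
\item[(c)] $\mc L$ is ample on $\wt{Z^c}$ and on the fibres over $t\neq 0$.
\end{enumerate}
Hence $\mc C=\Proj_{\bb A^1}\bigoplus_k(p_2)_*\mc L^{k}$ contracts exactly $\wt Z$ and leaves the family over $\bb A^1\setminus\{0\}$ untouched. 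The degrees on the $E_i$ then give $\un c=(m_i+2)$ via Lemma \ref{L:can-atom}.

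Drop your preliminary base change $t\mapsto t^l$. It changes the family and its $\bb G_m$-action over $\bb A^1\setminus\{0\}$, so \ref{T:deg-CY0} is not automatic as you claim, and it rescales $\wei_{\wh T}$ by $l$, which would spoil Corollary \ref{C:deg-CY}. The twist by $\ell\wt Z$ does the job you wanted the base change for.

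\textbf{The singularity.} Step 1 is also wrong, and not only technically. The set $\{f:\sum_i\Res(f\omega')=0\}$ is a hyperplane, not a ring: for $b=1$, $\un c=(6)$ it contains $t^2$ and $t^3$ but not $t^5$. The correct description $\mc O=\{f:\sum_i\Res(fg\,\omega')=0\ \text{for all } g\in\mc O\}$ is circular. More fundamentally, $\mc O$ is not determined by $\un c$ and the local shape of $\omega$ at the $q_i$. Take $Z$ of genus $3$ with $\div(\omega)=4q$:
\begin{enumerate}
\item[(i)] if $Z$ is hyperelliptic and $q$ a Weierstrass point, the contraction gives $A_6$ (semigroup $\langle 2,7\rangle$);
\item[(ii)] if $q$ is a hyperflex of a plane quartic, it gives $E_6$ (semigroup $\langle 3,4\rangle$).
\end{enumerate}
These are the strata $\mc H(4)^{hyp}$ and $\mc H(4)^{odd}$ used in the proof of Lemma \ref{l:4}, yet $\omega=u\,t^4dt$ locally in both cases. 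So $\mc O$ cannot be prescribed in advance and glued in. It depends on the global pair $(Z,\omega)$ and has to be the output of the contraction above.

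Your Step 3 computations (genus $g(\mc O)=g(Z)$, degrees of $\omega^{log}$ on the $E_i$) are fine, but they must be carried out on the contracted family, not on a ring built in Step 1.
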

\begin{proof}
We are going to adapt the construction in \cite[Sec. 2.2]{CY} to our setting.

Set as usual
$$
\ell:=\operatorname{lcm}(m_1+1,\dots,m_b+1)\quad \text{ and }\quad  a_i:=\frac{\ell}{m_i+1} \text{ for any } 1\leq i \leq b. 
$$
Consider the morphism $\pi: \wt{\mc C}\to \wh{\mc C}$ which is obtained by blowing-up each point $(q_i,0)$ with weights $(a_i,0)$, endowed with the sections  $\wt \sigma_j$ which are the strict transform of the sections $\wh \sigma_j$. Explicitly, if we denote by $t$ a base parameter of $\bb A^1$ at $0$ and by $x_i$ a fiber parameter at each of the point $q_i$ along $Z$, $\pi$ is obtained by blowing-up the ideal $(x_i,t^{a_i})$ for each $1\leq i \leq b$. Therefore, in the local coordinates $(x_i,t)$ around $(q_i,0)$, the surface $\wt{\mc C}$ is given the equation $\{u_ix_i-v_it^{a_i}=0\}$ inside $\bb A^2_{x_i,t}\times \bb P^1_{u_i,v_i}$. 
The exceptional divisor $E_i\cong \bb P^1$ of $\pi$ over $(q_i,0)$ has equation $\{x_i=t=0\}$ and it is attached to the strict transform  of $Z$ (which we denote by $\wt Z$) at the point $q_i^{\infty}:=(0,0,[0,1])$ and to the strict transform of $Z^c$ (which we denote by $\wt{Z^c}$) at the point $q_i^0:=(0,0,[1,0])$. 
Since each point $q_i$ is fixed by $\bb G_m$, the action of $\bb G_m$ on $\wh{\mc C}$ lifts to an action on $\wt{\mc C}$, which is locally given by 
$$
\lambda\cdot t=\lambda t, \quad \lambda\cdot x_i=x_i, \quad \lambda \cdot u_i=\lambda^{a_i}u_i, \quad \lambda \cdot v_i=v_i.
$$
In particular, on the local coordinates $v_i/u_i$ at $q_i^0$ and $u_i/v_i$ at $q_i^{\infty}$, the action of $\bb G_m$ is given by 
$$
\lambda\cdot \frac{v_i}{u_i}=\lambda^{-a_i} \frac{v_i}{u_i}  \: \text{ and } \: \lambda\cdot \frac{u_i}{v_i}=\lambda^{a_i} \frac{u_i}{v_i}.
$$
Consider the line bundle on $\wt{\mc C}$
$$
\mc L=\omega_{\pi}\left(\sum_{j=1}^n\wt \sigma_i+l \wt Z \right).
$$
 Then we have the following formulas for the restrictions of $\mc L$:
\begin{itemize}
    \item $\mc L_{|\wt{\mc C}_t}=\omega_{\wt{\mc C}_t}(\sum_{j=i}^n \wt \sigma_j(t))$ for any $t\neq 0$, and hence it is ample.
    \item $\mc L_{|\wt{Z^c}}=\omega_{\wt{\mc C}_0}(\sum_{j=i}^n \wt \sigma_j(0))_{|\wt{Z^c}}$, and hence it is ample.
    \item $
         \mc L_{|\wt Z}=(\omega_{\wt{\mc C}_0})_{|\wt Z}\left(-\sum_{i=1}^b\frac{l}{a_i} q_i^{\infty}\right)\cong\omega_Z(-\sum_{i=1}^bm_iq_i)$, and hence it is trivial because of \eqref{E:div-om}.
    \item For any $1\leq i \leq r+s$, we have that $\mc L_{|E_i}=\omega_{E_i}(q_i^0+q_i^{\infty}+(m_i+1)q_i^{\infty})\cong \mc O_{\bb P^1}(m_i+1)$, which is ample.
    \item For any $r+s+1\leq i \leq b$, we have that $\mc L_{|E_i}=\omega_{E_i}(q_i^{\infty}+(m_i+1)q_i^{\infty})\cong \mc O_{\bb P^1}(m_i)$, which is ample since $m_i\geq 1$ if $r+s+1\leq i \leq b$ by assumption. 
\end{itemize}
The desired test configuration is given by 
$$
\wt{\mc C}\to \mc C:=\Proj_{\bb A^1} \left(\bigoplus_{k} (p_2)_*(\mc L^k)\right)\to \bb A^1
$$
where $p_2:\wt{\mc C}\xrightarrow{\pi} \wt{\mc C} \to \bb A^1$, endowed with the section $\sigma_j$ which are the images of $\wt \sigma_j$. 
\end{proof}

A special case of the above construction is contained in the following

\begin{corollary}
    Consider a curve $(C, p_i)\in \mc U_{g,n}$  that contains a connected subcurve $Z$ meeting the complementary subcurve $Z^c$ in the nodes $\{q_1,\ldots, q_r\}$ and containing the marked points $\{q_{r+1}, \ldots, q_{s+r}\}$. Pick a regular differential $\omega$ on $Z$ such that its divisor of zeros is equal to
 \begin{equation}\label{E:div-om}
 \div(\omega)=\sum_{i=1}^b m_i q_i
 \end{equation}
  for some smooth non-marked points $\{q_{r+s+1},\ldots, q_b\}$ of $(C,p_i)$ contained in $Z$ and with the property that $m_i\geq 1$ for $r+s+1\leq i\leq b$. Then there exists a  Gorenstein singularity $\mc O$ with $\mathbb G_m$-action and multi-conductance $\un c(\mc O)=(m_1+2,\ldots, m_b+2)$ and a test configuration $(\mc C,\sigma_i)\to \bb A^1$ in $\mc U_{g,n}$ such that 
 \begin{enumerate}[(1)]
 \item  The fiber of $(\mc C,\sigma_i)$ over $1\in \bb A^1$ is isomorphic to $(C,p_i)$.
     \item  $(\mc C_0,\sigma_i(0))$ is obtained by gluing the complementary subcurve $Z^c$ nodally with $\ov X(\Opp)$ at the points $\{q_1,\ldots, q_r\}$ and the $\bb G_m$-action  on $\mc C_0$, induced by the test configuration, is trivial on $Z^c$ and it is the standard action on the complementary subcurve, which is an $S$-dangling projective atom  $\ov{X}(\Opp)^S$ where $S$ correspond to the branches $\{r+s+1,\ldots, b\}$.
 \end{enumerate}
 In particular, this test configuration defines a morphism $T:\Theta\to \mc U_{g,n}$ such $T(1)=(C, p_i)$.
\end{corollary}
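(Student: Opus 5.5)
The plan is to apply Theorem \ref{T:deg-CY} to the trivial test configuration of $(C,p_i)$. Set $\wh{\mc C}:=C\times \bb A^1\to \bb A^1$, with constant sections $\wh\sigma_i:=p_i\times \bb A^1$ and $\bb G_m$ acting only on the factor $\bb A^1$. This is a test configuration in $\mc U_{g,n}$: it is flat and proper, every fiber is $(C,p_i)\in \mc U_{g,n}$, and the fiber over $1$ is identified with $(C,p_i)$. Let $\wh T:\Theta\to \mc U_{g,n}$ be the associated morphism; it satisfies $\wh T(1)=(C,p_i)$.

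Next I will check the hypotheses of Theorem \ref{T:deg-CY}. The central fiber $(\wh{\mc C}_0,\wh\sigma_i(0))$ is $(C,p_i)$ itself, and $\bb G_m$ acts trivially on it. So every subcurve is $\bb G_m$-invariant and every point is $\bb G_m$-fixed; in particular this holds for $Z$, $Z^c$ and the nodes $\{q_1,\ldots,q_r\}$. The remaining data are exactly those in the statement of the Corollary: the subcurve $Z$ with its marked points $q_{r+1},\ldots,q_{r+s}$, the extra points $q_{r+s+1},\ldots,q_b$, and the regular differential $\omega$ on $Z$ with $\div(\omega)=\sum m_iq_i$ and $m_i\ge 1$ for the extra points. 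So Theorem \ref{T:deg-CY} applies. It gives a Gorenstein singularity $\mc O$ with $\bb G_m$-action and $\un c(\mc O)=(m_1+2,\ldots,m_b+2)$, together with a test configuration $(\mc C,\sigma_i)$ satisfying \ref{T:deg-CY0}, \ref{T:deg-CY1} and \ref{T:deg-CY2}.

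Now I translate these conclusions. By \ref{T:deg-CY0}, the restrictions of $(\mc C,\sigma_i)$ and of the trivial family to $\bb A^1\setminus\{0\}$ are $\bb G_m$-equivariantly isomorphic. Taking fibers over $1$ gives $(\mc C_1,\sigma_i(1))\cong(C,p_i)$, which is item (1). Item \ref{T:deg-CY1} is precisely the description of $\mc C_0$ required in item (2). Item \ref{T:deg-CY2} says the $\bb G_m$-action on $Z^c\subset\mc C_0$ agrees with the original action on $Z^c\subset C=\wh{\mc C}_0$, and that original action is trivial; this gives the triviality statement in item (2). Finally, the "in particular" clause of Theorem \ref{T:deg-CY} gives $T:\Theta\to\mc U_{g,n}$ with $T(1)=\wh T(1)=(C,p_i)$.

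There is no real obstacle, since all the geometry is done in Theorem \ref{T:deg-CY}. The only point I will write out carefully is the verification that the trivial family is a legitimate test configuration, namely that it is flat and proper with every fiber in $\mc U_{g,n}$, and that its central fiber with the trivial action meets the invariance and fixedness hypotheses of Theorem \ref{T:deg-CY}.
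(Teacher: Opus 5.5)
Your proposal is correct and is exactly the paper's argument: apply Theorem \ref{T:deg-CY} to the product test configuration $(C\times \bb A^1, p_i\times \bb A^1)\to\bb A^1$. Its central fiber carries the trivial $\bb G_m$-action, so the invariance and fixedness hypotheses hold automatically, and the conclusions translate directly into items (1) and (2).
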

The special case of the above Corollary where $Z=C$ (or equivalently $r=0$) and $b=s=n$ is proved in \cite[Thm. 2.6(1) and 2.7]{CY}. 
\begin{proof}
   It follows by applying Theorem \ref{T:deg-CY} to the product test configuration $(\wh{\mc C},\wh{\sigma}_i)=(C\times \bb A^1,p_i\times \bb A^1)\to \bb A^1$.
\end{proof}

\begin{corollary}\label{C:deg-CY}
Consider the test configurations $\wh T$ and $T$ of Theorem \ref{T:deg-CY}. Then the difference of the weights of $K+\psi +\alpha (\delta-\psi)$ on $\wh T$ and $T$ is equal to 
$$
\wei_T(K+\psi +\alpha (\delta-\psi))-\wei_{\wh T}(K+\psi +\alpha (\delta-\psi))= $$
$$=(\alpha-2)\chi_2^{\log}(\Opp^S)+(13-13\alpha) \chi_1^{\log}(\Opp^S)=-w_{\alpha}(\mc O^S).
$$
\end{corollary}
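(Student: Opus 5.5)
The plan is to compute both weights via the central fibres. By \eqref{E:Kdelta}, $K+\psi+\alpha(\delta-\psi)=(2-\alpha)\wh\lambda_2+(13\alpha-13)\lambda$, so it suffices to compare $\wei_T$ and $\wei_{\wh T}$ of $\wh\lambda_2$ and of $\lambda$. Moreover, $\lambda=\wh\lambda_1$ up to the $\psi$-contribution from the marked points, and on the fixed part this contribution cancels in the difference. For a map $\Theta\to\mc U_{g,n}$, the weight of $\wh\lambda_q$ is the total weight of the $\bb G_m$-action on $H^0(\mc C_0,\omega^{log}_{\mc C_0}{}^{\otimes q})$; for $q=1$ one uses the Hodge bundle instead, i.e.\ $H^0(\mc C_0,\omega_{\mc C_0})$, or equivalently $\lambda$ via duality with $H^1(\mc O)$. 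With the sign convention of \eqref{E:mu-L}, a character $\chi$ on the fibre contributes $-\chi$ to the weight. This is the same mechanism as in Corollary \ref{C:deg-sing}, and I take that computation as the model.

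The key step is a decomposition of the spaces of sections on the central fibres. For $T$, the central fibre $\mc C_0$ is $Z^c\cup\ov X(\Opp)^S$ glued at the nodes $q_1,\dots,q_r$. For $\wh T$, the central fibre is $Z^c\cup Z$ glued at the same nodes. I would use the normalization exact sequence at these nodes, where the gluing points are $\bb G_m$-fixed. Since $\omega^{log}$ restricted to each side is the log-canonical bundle of that side with the nodes marked, this gives a $\bb G_m$-equivariant identification
\[
H^0(\mc C_0,(\omega^{log})^{\otimes q})\;\text{``=''}\;H^0(Z^c,\cdot)\oplus H^0(\ov X(\Opp)^S,\omega^{log}_{\ov X(\Opp)^S}{}^{\otimes q})\ominus(\text{node fibres}),
\]
which holds in the representation ring. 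The analogous identification holds for $\wh{\mc C}_0$ with $Z$ in place of the atom.

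Next I would show that the $Z^c$-contributions agree. By Theorem \ref{T:deg-CY}\eqref{T:deg-CY2}, the action on $Z^c$ is the original one. The node-fibre terms also agree, because the characters at the fixed nodes of $\omega^{log}$ are determined by the residue pairing, and residues are invariant. Hence the difference of the two weights is the $Z$-contribution subtracted from the atom contribution.

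The remaining point, which I expect to be the main obstacle, is that the contribution of $Z$ to $\wei_{\wh T}$ does not disappear in general, since $\bb G_m$ may act nontrivially on $Z$. To handle it, I would avoid comparing central fibres directly. Instead I would use the explicit construction of $T$ in the proof of Theorem \ref{T:deg-CY}: $\mc C$ is the relative Proj of $\mc L=\omega_\pi(\sum\wt\sigma_j+\ell\wt Z)$ on the weighted blow-up $\wt{\mc C}$ of $\wh{\mc C}$. Working on the compactifications over $\bb P^1$ and applying Lemma \ref{l:weight-test}, the differences of $\deg\wh\lambda_q$ and $\deg\lambda$ are computed by Riemann--Roch and intersection theory on $\wt{\mc C}$. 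The twist by $\ell\wt Z$, combined with the fact that $\mc L|_{\wt Z}$ is trivial, makes $Z$ contract with zero net contribution. The exceptional curves $E_i$, with $\mc L|_{E_i}$ of degree $m_i+1$ or $m_i$, then contribute exactly $\chi_q^{log}(\Opp^S)$. This is the Chen--Yu computation \cite[Sec.~2.3]{CY}, generalized to a non-product $\wh{\mc C}$. Since the correction is local near $Z\times\{0\}$, the argument of \cite{CY} applies verbatim.

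Finally, I would combine the two coefficients, $(2-\alpha)$ on $\wh\lambda_2$ and $(13\alpha-13)$ on $\lambda$, together with the sign convention. This yields $(\alpha-2)\chi_2^{log}(\Opp^S)+(13-13\alpha)\chi_1^{log}(\Opp^S)$. Using $\chi_i^{log}(\Opp^S)=-\chi_i^{log}(\mc O^S)$ and the definition of $w_\alpha$ in Corollary \ref{C:deg-sing}, this equals $-w_\alpha(\mc O^S)$.
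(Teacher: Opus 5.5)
Your steps 1--3 are the bookkeeping behind the paper's one-line proof, and they are sound. With the sign convention \eqref{E:mu-L}, the weights of $\wh\lambda_2$ and of $\lambda$ are minus the characters on sections over the central fibre. (Note that $\lambda=\wh\lambda_1$ on the nose by \eqref{E:Mum-for}, so no $\psi$-correction is needed.) These characters split over the components glued at the $\bb G_m$-fixed nodes, because residues trivialise the node fibres. The $Z^c$-parts of $T$ and $\wh T$ also agree. Hence $\wei_T-\wei_{\wh T}$ is the atom term minus the $Z$-term, and the statement hinges on the $Z$-term being zero.

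The gap is your step 4, which does not deliver this. The number $\wei_{\wh T}$ depends only on $\wh T$. That $\mc L|_{\wt Z}$ is trivial and $\wt Z$ gets contracted only says that $Z$ contributes nothing to $\wei_T$; it cannot cancel a nonzero $Z$-term in $\wei_{\wh T}$. Worse, suppose $\bb G_m$ really acted nontrivially along $Z$, say the fibre coordinate $x_i$ at $q_i$ had weight $w_i\neq 0$. Then on $E_i$ the coordinate $u_i/v_i=t^{a_i}/x_i$ would have weight $a_i-w_i$ rather than $a_i$. The new subcurve would then not be $\ov X(\Opp)^S$ with its standard action, and its characters would not be $\chi_q^{\log}(\Opp^S)$. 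The Chen--Yu computation carries over ``verbatim'' only when the neighbourhood of $Z\times\{0\}$ looks like their product situation with trivial action. That is precisely the case you set out to avoid, so in the generality you invoke, the claimed identity is not what the construction produces.

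What is missing is the observation that in the setting of Theorem \ref{T:deg-CY} the torus acts trivially on $Z$. The proof of that theorem writes the local action as $\lambda\cdot x_i=x_i$, which is what produces the standard action on $\ov X(\Opp)^S$ in part (3). This also holds in every use of the theorem in the paper: the product configuration, or central fibres of Theorem \ref{T:deg-sing}, where $Z$ lies in the image of the normalization. In that case $\omega^{\log}_{\wh{\mc C}_0}|_Z=\omega_Z(\sum_{i\le r+s}q_i)$ carries the canonical, hence trivial, linearisation, so all characters on the $Z$-part vanish. Your steps 1--3 together with \eqref{E:Kdelta} then finish the proof directly, which is what the paper does. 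Step 4 should be replaced by this remark rather than by an intersection-theoretic computation.
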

\begin{proof}
This follows from the description of the central fiber of the test configuration $T$ in Theorem \ref{T:deg-CY}, together with formula \ref{E:Kdelta} and our convention on the definition \eqref{E:mu-L} of $\wei_T(-)$. 
\end{proof}

\section{Destabilizing singularities}

The aim of the first part of this Section is to classify the singularities of the curves that can appear in $\mc U_{g,n}(\alpha)$ for $\alpha$ sufficiently big, namely $\alpha \geq 5/9$.

In order to state our result, we need to recall the following definition of $\alpha$-invariant (see \cite[Sec. 2.3]{AFS16} and \cite[Sec. 4.2]{CY}). 
Given a non-nodal Gorenstein singularity $\mc O$ with $\bb G_m$-action and a subset $S$ of its branches, we define the $\alpha$-invariant of $\mc O^S$ to be 
\begin{equation}\label{E:alphaO}
\alpha(\mc O^S):=\frac{13\chi_1^{log}(\mc O^S)-2\chi_2^{log}(\mc O^S)}{13\chi_1^{log}(\mc O^S)-\chi_2^{log}(\mc O^S)}=\frac{13-2\frac{\chi_2^{log}(\mc O^S)}{\chi_1^{log}(\mc O^S)}}{13-\frac{\chi_2^{log}(\mc O^S)}{\chi_1^{log}(\mc O^S)}}
\end{equation}
with the convention that $\alpha(\mc O^S)$ is undefined if $13\chi_1^{log}(\mc O^S)-\chi_2^{log}(\mc O^S)=0$. 

\begin{remark}\label{R:alpha-inv}
   We have that:
    \begin{enumerate}[(i)]
        \item \label{R:alpha-inv1}
$\alpha(\mc O^S)\in 
\begin{cases}
    [0,1) & \text{ if } \frac{\chi_2^{log}(\mc O^S)}{\chi_1^{log}(\mc O^S)}\leq \frac{13}{2}, \\
    (-\infty,0) & \text{ if } \frac{13}{2}<\frac{\chi_2^{log}(\mc O^S)}{\chi_1^{log}(\mc O^S)}<13,\\
     (2,\infty) & \text{ if } 13<\frac{\chi_2^{log}(\mc O^S)}{\chi_1^{log}(\mc O^S)}.
\end{cases}$
\item  \label{R:alpha-inv2} $w_{\alpha}(\mc O^S)\leq 0\Leftrightarrow \begin{cases}
    \alpha\leq \alpha(\mc O^S) & \text{ if } \alpha(\mc O^S)>2, \\
    \alpha\geq \alpha(\mc O^S) & \text{ if } \alpha(\mc O^S)<1, \\
     \chi_2^{\log}(\mc O^S)=13\chi_1^{\log}(\mc O^S)\geq 0 & \text{ if } \alpha(\mc O^S) \text{ is undefined,}
\end{cases}$ 
\newline and the strict inequality (resp. equality) holds on the left if and only if it holds on the right. 
\end{enumerate}
\end{remark}

We now classify smoothable non-nodal dangling Gorenstein singularities with $\bb G_m$-action having $\alpha$-invariant at least $5/9$ (and at most $1$). 

\begin{lemma} \label{l:4}
Let $\ov X(\mc O)^S$ be an $S$-dangling projective $\mc O$-atom associated to a non-nodal Gorenstein smoothable curve singularity  $\mc O$ with a $\bb G_m$-action.
Assume that
$$
\chi_2^{log}(\mc O^S) \le 4 \chi_1^{log}(\mc O^S), \text{ or equivalently that } \alpha(\mc O^S)\in \left[\frac{5}{9},1\right).
$$

Then $\mc O$ is one of the singularities appearing in Table \ref{t:anyS}.



\begin{table}[ht]
\centering
\begin{tabular}{ |c|c|c|c|c| } 
\hline
$\alpha$-invariant &  $g,b,\underline c$ & $\chi_1$ & $\chi_2$ & Singularity type  \\  
\hline

$9/11$ 
& $(1,1,(2))$ 
& $1$ 
& $2$ 
& $A_2$ \\ \hline

$7/10$ 
& $(1,2,(2,2))$ 
& $1$ 
& $3$ 
& $A_3$ \\ \hline

$7/10$ 
& $(2,1,(4))$ 
& $4$ 
& $12$ 
& $A_4^{\{1\}}$ \\ \hline

$7/10$ 
& $(2,2,(3,3))$ 
& $3$ 
& $9$ 
& $A_5^{\{1,2\}}$ \\ \hline

$2/3$ 
& $(2,1,(4))$ 
& $4$ 
& $13$ 
& $A_4$ \\ \hline

$19/29$ 
& $(2,2,(3,3))$ 
& $3$ 
& $10$ 
& $A_5^{\{1\}}, A_5^{\{2\}}$ \\ \hline

$17/28$ 
& $(2,2,(3,3))$ 
& $3$ 
& $11$ 
& $A_5$ \\ \hline

$17/28$ 
& $(2,3,(3,3,2))$ 
& $3$ 
& $11$ 
& $D_6^{\{1,2\}}$ \\ \hline

$17/28$ 
& $(3,1,(6))$ 
& $9$ 
& $33$ 
& $A_6^{\{1\}}$ \\ \hline

$17/28$ 
& $(3,2,(4,4))$ 
& $6$ 
& $22$ 
& $A_7^{\{1,2\}}$ \\ \hline

$22/37$ 
& $(2,2,(4,2))$ 
& $4$ 
& $15$ 
& $D_5^{\{1\}}$ \\ \hline

$49/83$ 
& $(3,1,(6))$ 
& $9$ 
& $34$ 
& $A_6$ \\ \hline

$32/55$ 
& $(3,2,(4,4))$ 
& $6$ 
& $23$ 
& $A_7^{\{1\}}$, $A_7^{\{2\}}$ \\ \hline

$5/9$ 
& $(1,3,(2,2,2))$ 
& $1$ 
& $4$ 
& $D_4$ \\ \hline

$5/9$ 
& $(2,2,(4,2))$ 
& $4$ 
& $16$ 
& $D_5$ \\ \hline

$5/9$ 
& $(2,3,(3,3,2))$ 
& $3$ 
& $12$ 
& $D_6^{\{1\}}$, $D_6^{\{2\}}$ \\ \hline

$5/9$ 
& $(3,1,(6))$ 
& $8$ 
& $32$ 
& $E_6^{\{1\}}$ \\ \hline

$5/9$ 
& $(3,2,(4,4))$ 
& $6$ 
& $24$ 
& $A_7$ \\ \hline

$5/9$ 
& $(3,2,(5,3))$ 
& $7$ 
& $28$ 
& $E_7^{\{1,2\}}$ \\ \hline

$5/9$ 
& $(3,4,(3,3,3,3))$ 
& $4$ 
& $16$ 
& $\mc H(1,1,1,1)^{\{1,2,3,4\}}$ \\ \hline

$5/9$ 
& $(4,1,(8))$ 
& $16$ 
& $64$ 
& $A_8^{\{1\}}$ \\ \hline

$5/9$ 
& $(4,2,(5,5))$ 
& $10$ 
& $40$ 
& $A_9^{\{1,2\}}$ \\ \hline

\end{tabular}
\caption{Atoms with $\chi_2 \le 4 \chi_1$.}
\label{t:anyS}
\end{table}

\end{lemma}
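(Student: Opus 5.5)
Throughout I write $\chi_i$ for $\chi_i^{log}(\mc O^S)$. The plan is to reduce the condition $\chi_2\le 4\chi_1$ to a numerical bound on $(g,b,\un c)$, and then run through the finitely many cases. The main tool is the computation of the characters of the standard $\bb G_m$-action on pluricanonical forms of the $S$-dangling projective atom, as in \cite[Sec. 3.1]{AFS16} and \cite[Sec. 2.3]{CY}.

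\textbf{Step 1: compute the characters.} Using Lemma \ref{L:Gor}\eqref{L:Gor4} and the generator $\omega_0=(K_1dt_1/t_1^{c_1},\ldots,K_bdt_b/t_b^{c_b})$, whose $\bb G_m$-weight is $l$, I write
\[
H^0\big(\ov X(\mc O)^S,\omega^{log}(\textstyle\sum q_i)^{\otimes m}\big)=\{f\,\omega_0^{m}\},
\]
where $f$ ranges over functions on the normalization that are regular at the $q_i$ (up to poles allowed by the log structure), subject to the condition $f\omega_0^m\in\omega_{\mc O}^{\otimes m}$. The character is then the sum of $ml$ over a basis, plus the weights of the monomials $t_i^{j}$, which are $-a_ij$. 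The basis runs over a range dictated by the multidegree in Lemma \ref{L:can-atom}: on $E_i$ the degree is $m(c_i-1)$, reduced by $m$ when $i\in S$.

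The resulting formula is
\[
\chi_m=ml\cdot h^0_m-\sum_{\text{basis}}(\text{weights}) .
\]
It depends on the singularity $\mc O$ only through the semigroup of weights of $\mc O$ (for $m=1$) and of $\omega^{\otimes 2}$ (for $m=2$). For $m=2$ the quotient $\omega^{\otimes 2}/\mc O\omega_0^2$ has a weight-graded structure controlled by $\mc O$ itself, since $\mc O$ is Gorenstein and $\omega=\mc O\omega_0$.

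\textbf{Step 2: a lower bound for $\chi_2/\chi_1$.} I would show that $\chi_2-4\chi_1$ is bounded below by a positive quantity that grows with $g$ and with the $c_i$. I expect this is achieved by pairing basis elements: the weights of $\omega_0^2\cdot\mc O$ dominate twice those of $\omega_0\cdot\mc O$ by an extra $l$ per element. Since $h^0_2\approx 3g-3+\ldots$ while $h^0_1=g$, one should get roughly
\[
\chi_2-4\chi_1\ \gtrsim\ \Big(\tfrac{g}{2}-c\Big)\,l+\ldots .
\]
This should force $g\le 4$, $b\le 4$ and $c_i\le 8$. Dangling branches decrease $\chi_2$ relative to $\chi_1$ in a controlled way, shifting the weight range on $E_i$ by $a_i$, and the estimate must absorb this.

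\textbf{Step 3: enumerate.} With these bounds there are finitely many multi-conductances. For each one, I use the classification of Gorenstein $\bb G_m$-singularities of small genus (ADE and $\mc H$-type, as in \cite{AFS16}) via the territory description of \S\ref{Sub:local}: the $\bb G_m$-fixed Gorenstein $B\in\Ter^o_{\un c}$ are monomial/weighted. I also impose smoothability, which rules out e.g. the non-smoothable genus-$3$ or $4$ monomial curves with many branches. I then compute $\chi_1$ and $\chi_2$ for every $S$ and keep exactly the cases with $\chi_2\le4\chi_1$. The values can be cross-checked against \cite[Table 3]{AFS16} and \cite[Sec. 4]{CY}. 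The equivalence with $\alpha(\mc O^S)\in[5/9,1)$ follows from Remark \ref{R:alpha-inv}, since $13-2x$ over $13-x$ equals $5/9$ at $x=4$ and decreases on $[0,13/2]$, with $\chi_1>0$.

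\textbf{Main obstacle.} The hard part is Step 2: obtaining a uniform inequality in the presence of dangling branches and non-monomial weight gaps. If a clean pairing argument fails, I would fall back on the explicit genus formulas of \cite[Sec. 3]{AFS16}. These express $\chi_1$ and $\chi_2$ through the Weierstrass-type gap sequence of the weight semigroup, and then bound $\chi_2/\chi_1$ below by comparing the largest gap (which is $\approx l$) with the average gap.
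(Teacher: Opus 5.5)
There is a genuine gap in Step 2, which is the heart of the lemma, and the pairing idea cannot close it. Carry out Step 1 exactly. The residue pairing with $\omega_0$ makes $\mc O_w$ and $\mc O_{l-w}$ orthogonal complements for $0\le w\le l$, and $\mc O_w=\mc O^n_w$ for $w>l$. From this you get the identities the paper quotes from \cite{AFS16} and \cite{CY}:
\[
\chi_2^{\log}(\mc O)=\chi_1^{\log}(\mc O)+(2g-2+b)l,\qquad \chi_1^{\log}(\mc O^S)=\chi_1^{\log}(\mc O),\qquad \chi_2^{\log}(\mc O^S)=\chi_2^{\log}(\mc O)-\sum_{i\in S}a_i .
\]
Hence $\chi_2-4\chi_1=(2g-2+b)l-\sum_{i\in S}a_i-3\chi_1$. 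So the hypothesis $\chi_2\le 4\chi_1$ is a \emph{lower} bound on $\chi_1$. To turn it into a bound on $(g,b,\un c)$ you need an \emph{upper} bound on $\chi_1$ of order $(g+1)l/2$.

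Pairing $f\omega_0$ with $f\omega_0^2$ only recovers the first identity; it says nothing about the size of $\chi_1$ itself. The trivial estimate $\chi_1\le \delta l=(g-1+b)l$ is far too weak. The missing ingredient is Clifford's inequality $\chi_1^{\log}(\mc O)\le (g+1)l/2$ (formula (14) of \cite{CY}). This is exactly where smoothability of $\mc O$ enters the paper's argument. In your plan smoothability appears only in Step 3 to prune cases, so Step 2 attempts a uniform estimate without the hypothesis the argument relies on. Your guessed shape $(g/2-\mathrm{const})\,l$ is precisely what Clifford produces: $(2g-2+b)l-\tfrac32(g+1)l=(\tfrac g2+b-\tfrac72)l$.

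With Clifford in hand the reduction is short. Dividing by $l$ gives $g-7+2b-\sum_{i\in S}\frac{2}{c_i-1}\le 0$. Substitute $2g-2=\sum_i(c_i-2)$ and use $c_i\ge 2$ for all $i$ and $c_i\ge 3$ for $i\in S$ (G-stability of the dangling atom, Lemma \ref{L:can-atom}); this yields $b\le 4$ and $c_i\le 10$. It does \emph{not} give your claimed $g\le 4$, $c_i\le 8$. For example, the unibranch genus-$5$ case $\un c=(10)$, with $S=\emptyset$ or $S=\{1\}$, passes the estimate. It must be excluded by computing $\chi_1,\chi_2$ for each symmetric semigroup of genus $5$. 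Several multi-conductances in genus $3$ and $4$ likewise survive the bound and are eliminated only by explicit computation. From there your Step 3 matches the paper's: enumeration via the known classifications in genus $\le 3$, the non-varying strata of \cite{CY}, and symmetric semigroups for the unibranch cases.
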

 The case without dangling branches of the above Lemma follows from \cite[Thm. 1.5]{CY}, where the authors classify all non-nodal Gorenstein smoothable curve singularities with $\bb G_m$-action (without dangling branches) with $\alpha$-invariant belonging to $(3/8,1]$. However, allowing dangling branches, we get more singularities that in loc. cit. 

 Note that the extreme case where all the branches are dangling is somewhat special since it can only appear for $n=0$ and in genus equal to the genus of the singularity.

\begin{proof}
As usual, we set $g=g(\mc O)$, $b=b(\mc O)$, $l=l(\mc O)$ and $\un c(\mc O)=(c_1, \ldots,c_b)$  (see Subsection \ref{Sub:local}).
Then formulas \cite[Corollaries 3.3 and 3.6]{AFS16} and \cite[Prop. 4.1(iv)]{CY} give that 
\begin{equation}\label{E:chi1, chi2}
 \begin{sis}
& \chi_1^{\log}(\mc O^S)= \chi_1^{\log}(\mc O), \\
& \chi_2^{\log}(\mc O^S)= \chi_2^{\log}(\mc O)-\sum_{i\in S}\frac{l}{c_i-1}, \\
& (2g-2+b)l+\chi_1^{log}(\mc O)= \chi_2^{log}(\mc O),
 \end{sis}
\end{equation}
Moreover, using Clifford formula (which can be applied  since $\mc O$ is smoothable, one gets the upper bound (see \cite[Formula (14)]{CY}):
\begin{equation}\label{E:equa3}
     \chi_1^{log}(\mc O)\leq \frac{(g+1)l}{2}.
\end{equation}
Combining \eqref{E:chi1, chi2} and \eqref{E:equa3} and our assumption that $\chi_2^{\log}(\mc O^S)\leq 4\chi_1^{\log}(\mc O^S)$, we obtain
\begin{equation}\label{e:inequality}
g-7+2b - \sum_{i \in S} \frac{2}{c_i-1} \le 0.
\end{equation}

Using that $2g-2=\sum_i (c_i-2)$,  the inequality \eqref{e:inequality} is equivalent to:
\begin{equation}\label{E:new-ineq}
\sum_{i\in S} \left(\frac{c_i+2}{2}-\frac{2}{c_i-2} \right)+\sum_{i\in S^c} \frac{c_i+2}{2}\leq 6.    
\end{equation}
Using that $c_i\geq 2$ for any $i$ by Lemma \ref{L:ci}\eqref{L:ci2} and that $c_i \ge 3$ for any $i\in S$ by Lemma \ref{L:can-atom}, the inequality \eqref{E:new-ineq} implies that 
$$
\begin{sis}
&\frac{3}{2}|S|+2|S^c|\leq 6 \Rightarrow b\leq 4,\\
& c_i\leq 10 \text{ for any } i. \\
\end{sis}$$
Therefore, there are finitely many possibilities for $\un c$ and $S$ that we list below. We are going to use the fact that the singularities that we get are either of genus at most $3$ (which have been classified in \cite{Smyth1}, \cite{Battistella2}, and \cite{Battistella3}), or they belong to a non-varying strata as in \cite[Thm. 1.2]{CY} or they are unibranch of genus at most $5$ (and hence they correspond to a symmetric numerical semigroups of genus at most $5$, which are easy to list).

We end up with the following possibilities for $\un c$ and $S$:
\begin{enumerate}[leftmargin=*]    
\item $\un c=(3,3,3,3)$ (which implies $g=3$) and $S=\{1,2,3,4\}$.

Then  $\mc O$ is one of the curve singularities that correspond to the stratum $\mc H(1,1,1,1)$ in \cite[Table 1]{Battistella3}, and more explicitly, it is either a planar ordinary singularity of multiplicity $4$ of equation $\{xy(x-y)(x-ay)=0\}$ for $a\neq 0,1,\infty$, or the space curve singularity of equations $\{xy=z(z-x^2-y^2)=0\}$ (see \cite[5.11]{CY}).

In all cases, we have that (by \cite[3.7]{CY})
$$
\chi_1^{log}(\mc O^{\{1,2,3,4\}})=4 \text{ and }
    \chi_2^{log}(\mc O^{\{1,2,3,4\}})=\chi_2^{log}(\mc O)-4=20-4=16.
$$
    
    \item $\un c=(2,2,2)$ (which implies $g=1$) and $S=\emptyset$.

    Then $\mc O$ is a $D_4$-singularity and we have that (by  \cite[Table 1]{AFS16})
    $$\chi_1^{log}(\mc O)=1 \text{ and } \chi_2^{log}(\mc O)=4.$$
    \item $\un c=(3,3,2)$ (which implies $g=2$) and $S=\{1,2\}$ or $S=\{1\}$ or $S=\{2\}$.

   Then $\mc O$ is an $D_6$-singularity and we have (by \cite[Table 1]{AFS16}) 
   $$\chi_1^{log}(\mc O^{S})=3 \text{ and } \chi_2^{log}(\mc O^{S})=13-|S|.$$
    
    \item $\un c=(4,3,3)$ (which implies $g=3$) and $S=\{1,2,3\}$ or $S=\{2,3\}$.

Then $\mc O$ is the space curve singularity  of equations $\{xy,z^2-y^3+x^2z=0\}$ (which correspond to the stratum $\mc H(2,1,1)$ of \cite[Table 1]{Battistella3}, see also \cite[5.3]{CY}) and we have that (by \cite[5.3]{CY})
 $$\chi_1^{log}(\mc O^S)=11, \chi_2^{log}(\mc O^{2,3})=53-2\cdot6/2=47 \text{ and } \chi_2^{log}(\mc O^{1,2,3})=53-2\cdot6/2-6/3=45.$$
 Note that $\chi_2^{\log}(\mc O^S)>4 \chi_1^{\log}(\mc O^S)$ for both choices of $S$. 
    
   \item $\un c=(2,2)$ (which implies $g=1$) and $S=\emptyset$. 

   Then $\mc O$ is a tacnode  (i.e.\ an $A_3$-singularity) and we have that (by \cite[Table 1]{AFS16})
$$\chi_1^{log}(\mc O)=1 \text{ and } \chi_2^{log}(\mc O)=3.$$ 
   \item $\un c=(4,2)$ (which implies $g=2$) and $S=\{1\}$ or $S=\emptyset$. 

   Then  $\mc O$ is an $D_5$-singularity and we have (by \cite[Table 1]{AFS16}) 
   $$
   \chi_1^{log}(\mc O^S)=4 \text{ and } \chi_2^{log}(\mc O^S)=16-|S|. 
   $$
   
   \item $\un c=(3,3)$ (which implies $g=2$) and any $S$.

   Then $\mc O$ is an $A_5$-singularity and we have (by \cite[Table 1]{AFS16}) 
   $$ \chi_1^{log}(\mc O^S)=3 \text{ and } \chi_2^{\log}(\mc O^S)=11-|S|. $$
    \item $\un c=(6,2)$ (which implies $g=3$) and $S=\{1\}$ or $S=\emptyset$.

Then there are two possibilities:
\begin{itemize}
    \item $\mc O$ is a $D_7$-singularity (which corresponds to the stratum $\mc H(4,0)^{ev}$ of \cite[Table 1]{Battistella3}) and we have that (by \cite[Table 1]{AFS16}) 
$$
\chi_1^{log}(\mc O^S)=9 \text{ and }\chi_1^{log}(\mc O^S)=39-|S|. 
$$
    \item $\mc O$ is the singularity which corresponds to the stratum $\mc H(4,0)^{odd}$ of \cite[Table 1]{Battistella3} and we have that 
    $$
    \begin{sis}
     &    \chi_1^{log}(\mc O^S)=\chi_1^{log}(E_6)=8,\\
     & \chi_2^{log}(\mc O^S)=\chi_2^{log}(\mc O)-|S|=\chi_2^{\log}(E_6)+5-|S|=38-|S|,
    \end{sis}
$$
where we have used \cite[5.14]{CY} (with $l=5$) and the fact that $E_6$ is the singularity which corresponds to the stratum $\mc H(4)^{odd}$ by \cite[Table 1]{Battistella3}.
   \end{itemize}
  Note that $\chi_2^{\log}(\mc O^S)>4 \chi_1^{\log}(\mc O^S)$ in all cases.   
    
  \item $\un c=(5,3)$ (which implies $g=3$) and any $S$.
Then  $\mc O$ is an $E_7$-singularity and we have that (by \cite[Table 1]{AFS16}) 
$$\chi_1^{log}(\mc O^{\{S\}})=7, \chi_2^{log}(\mc O)=31, \chi_2^{log}(\mc O^{1})=30, \chi_2^{log}(\mc O^{2})=29, \chi_2^{log}(\mc O^{\{1,2\}})=28.$$
Note that $\chi_2^{log}(\mc O^S) > 4 \chi_1^{log}(\mc O^S)$ unless $S=\{1,2\}$.

  \item $\un c=(4,4)$ (which implies $g=3$) and any $S$.

 Then there are two possibilities:
 \begin{itemize}
     \item $\mc O$ is an $A_7$-singularity (which correspond to the stratum $\mc H(2,2)^{ev}$ of \cite[Table 1]{Battistella3}) and we have 
 (by \cite[Table 1]{AFS16})
 $$\chi_1^{log}(\mc O^S)=6 \text{ and } \chi_2^{log}(\mc O^S)=24-|S|.
 $$
 \item  $\mc O$ is the space curve singularity of equations $\{xy,x^3+y^3-z^2=0\}$ (which correspond to the stratum $\mc H(2,2)^{odd}$ of \cite[Table 1]{Battistella3}, see also \cite[5.2]{CY}) and we have that (by \cite[5.2]{CY})
 $$\chi_1^{log}(\mc O^S)=5 \text{ and } \chi_2^{log}(\mc O^S)=23-|S|.$$
 Note that $\chi_2^{\log}(\mc O^S)>4 \chi_1^{\log}(\mc O^S)$ for all choices of $S$.  
 \end{itemize}

    \item $\un c=(7,3)$ (which implies $g=4$) and $S=\{1,2\}$ or $S=\{2\}$.

    Then  $\mc O$ is the space curve singularity of equations $\{xz-y^2=x^2y-z^2=0\}$ (which correspond to the stratum $\mc H(5,1)$, see \cite[5.4]{CY}) and we have that (by \cite[5.4]{CY})
 $$\chi_1^{log}(\mc O^S)=12, \chi_2^{log}(\mc O^{2})=60-6/2=57 \text{ and } \chi_2^{log}(\mc O^{1,2})=60-6/6-6/2=56.$$
 Note that $\chi_2^{\log}(\mc O^S)>4 \chi_1^{\log}(\mc O^S)$ for both choices of $S$. 
    
    \item $\un c=(6,4)$ (which implies $g=4$) and $S=\{1,2\}$.
Then there are three possibilities:
\begin{itemize}
    \item $\mc O$  is the space curve singularity of equations $\{xy=x^5+y^3-z^2=0\}$ (which corresponds to the stratum $\mc H(4,2)^{ev}$ of \cite[5.5]{CY}) and we have that (by \cite[5.5]{CY})
    $$ \chi_1(\mc O^{\{1,2\}})=32 \text{ and } \chi_2(\mc O^{\{1,2\}})=152-15/5-15/33=144.
    $$
    \item  $\mc O$ is the  curve singularity in $\bb A^4$ having equations $\{xy=xw=z-w^2=zw-y^3=x^3+y^2w-z^2=0\}$ (which corresponds to the stratum $\mc H(4,2)^{odd}$ of \cite[5.5]{CY}) and we have that (by \cite[5.5]{CY})
    $$ \chi_1(\mc O^{\{1,2\}})=29 \text{ and } \chi_2(\mc O^{\{1,2\}})=149-15/5-15/33=141.
    $$
    \item  $\mc O$ is the  curve singularity which corresponds to the stratum $\mc H(4,2)^{hyp}$ (see \cite[5.15]{CY}) and we have that (by \cite[6.2]{CY} and \cite[Prop. 4.1(iv)]{CY})
    $$
    \begin{sis}
     &    \chi_1(\mc O^{\{1,2\}})=\chi_1(\mc O)=\frac{(g+1)l}{2}-\frac{l-a_1}{4}-\frac{l-a_2}{4}=32, \\
     & \chi_2(\mc O^{\{1,2\}})=\chi_2(\mc O)-15/5-15/3=\chi_1(\mc O)+(2g-2+b)l-3-5=144.
    \end{sis}
    $$
   
\end{itemize}
In all of the above cases, we have that  $\chi_2^{log}(\mc O^{\{1,2\}}) > 4 \chi_1^{log}(\mc O^{\{1,2\}})$. 
    
    \item $\un c=(5,5)$ (which implies $g=4$) and $S=\{1,2\}$.

Then there are two possibilities:
 \begin{itemize}
     \item $\mc O$ is an $A_9$-singularity (which correspond to the stratum $\mc H(3,3)^{hyp}$ of \cite[3.2]{CY}) and we have 
 (by \cite[Table 1]{AFS16})
 $$\chi_1^{log}(\mc O^{\{1,2\}})=10 \text{ and } \chi_2^{log}(\mc O^{\{1,2\}})=42-1-1=40.
 $$
 \item $\mc O$  is the space curve singularity of equations $\{yz=x^3-y^2-z^2=0\}$ (which corresponds to the stratum $\mc H(3,3)^{non-hyp}$ of \cite[5.6]{CY}) and we have (by \cite[5.6]{CY})
 $$\chi_1^{log}(\mc O^{\{1,2\}})=8 \text{ and } \chi_2^{log}(\mc O^{\{1,2\}})=40-1-1=38.$$
 \end{itemize}
 Note that in the second case we have that   $\chi_2^{log}(\mc O^{\{1,2\}}) > 4 \chi_1^{log}(\mc O^{\{1,2\}})$. 
  
    \item $\un{c}=(2)$ (which implies $g=1$) and $S=\emptyset$. 
    
    Then $\mc O$ is an ordinary cusp (i.e.\ an $A_2$-singularity) and we have that (by \cite[Table 1]{AFS16}) 
    $$ \chi_1^{log}(\mc O)=1 \text{ and } \chi_2^{log}(\mc O)=2.$$ 
    \item $\un{c}=(4)$ (which implies $g=2$) and $S=\{1\}$ or $S=\emptyset$.

    Then $\mc O$ is a ramphoid cusp (i.e. an $A_4$-singularity) and we have that (by \cite[Table 1]{AFS16})
 $$
   \chi_1^{log}(\mc O^S)=4,    \chi_2^{log}(\mc O)=13 \text{ and }  \chi_2^{log}(\mc O^{1})=13-3/3=12.
   $$
    \item $\un{c}=(6)$ (which implies $g=3$) and $S=\{1\}$ or $S=\emptyset$.

    Then $\mc O$ is the monomial curve singularity associated to one of the following symmetric numerical semigroups:
    \begin{itemize}
        \item $\langle 2,7\rangle$ (which corresponds to the $A_{6}$-singularity) which has gap sequence equal to $\{1,3,5\}$. We have (by \cite[Table 1]{AFS16})   
        $$
        \chi_1^{\log}(\mc O)=\chi_1^{\log}(\mc O^{1})=9, \chi_2^{\log}(\mc O)=25+9=34 \text{ and } \chi_2^{\log}(\mc O^{1})=34-5/5=33.
        $$ 
        \item $\langle 3,4\rangle$ (which corresponds to the $E_{6}$-singularity) which has gap sequence equal to $\{1,2,5\}$. We have (by \cite[Table 1]{AFS16})   
        $$
        \chi_1^{\log}(\mc O)=\chi_1^{\log}(\mc O^{1})=8, \chi_2^{\log}(\mc O)=25+8=33 \text{ and } \chi_2^{\log}(\mc O^{1})=33-5/5=32.
        $$ 
        Note that $\chi_2^{\log}(\mc O)>4 \chi_1^{\log}(\mc O)$. 
    \end{itemize}
     \item $\un{c}=(8)$ (which implies $g=4$) and $S=\{1\}$ or $S=\emptyset$.

     Then $\mc O$ is the monomial curve singularity associated to one of the following symmetric numerical semigroups:
    \begin{itemize}
        \item $\langle 2,9\rangle$ (which corresponds to the $A_{8}$-singularity) which has gap sequence equal to $\{1,3,5,7\}$.  We have (by \cite[Table 1]{AFS16})   
        $$
        \chi_1^{\log}(\mc O)=\chi_1^{\log}(\mc O^{1})=16, \chi_2^{\log}(\mc O)=49+16=65 \text{ and } \chi_2^{\log}(\mc O^{1})=65-7/7=64.
        $$  
        Note that $\chi_2^{\log}(\mc O)>4 \chi_1^{\log}(\mc O)$.
        \item $\langle 3,5\rangle$ (which corresponds to the $E_{8}$-singularity) which has gap sequence equal to $\{1,2,4,7\}$. We have (by \cite[Table 1]{AFS16})   
        $$
        \chi_1^{\log}(\mc O)=\chi_1^{\log}(\mc O^{1})=14, \chi_2^{\log}(\mc O)=49+14=63 \text{ and } \chi_2^{\log}(\mc O^{1})=63-7/7=62.
        $$  
         Note that $\chi_2^{\log}(\mc O)>4 \chi_1^{\log}(\mc O)$ and  $\chi_2^{\log}(\mc O^{1})>4 \chi_1^{\log}(\mc O^{1})$.
        \item $\langle 4,5,6\rangle$  which has gap sequence equal to $\{1,2,3,7\}$. We have (by \cite[Table 1]{AFS16})   
        $$
        \chi_1^{\log}(\mc O)=\chi_1^{\log}(\mc O^{1})=13, \chi_2^{\log}(\mc O)=49+13=62 \text{ and } \chi_2^{\log}(\mc O^{1})=62-7/7=61.
        $$  
         Note that $\chi_2^{\log}(\mc O)>4 \chi_1^{\log}(\mc O)$ and  $\chi_2^{\log}(\mc O^{1})>4 \chi_1^{\log}(\mc O^{1})$.
    \end{itemize}
     \item $\un{c}=(10)$ (which implies $g=5$) and $S=\{1\}$ or $S=\emptyset$.

Then $\mc O$ is the monomial curve singularity associated to one of the following symmetric numerical semigroups:
    \begin{itemize}
        \item $\langle 2,11\rangle$ (which corresponds to the $A_{10}$-singularity) which has gap sequence equal to $\{1,3,5,7,9\}$. We have (by \cite[Table 1]{AFS16})   
        $$
        \chi_1^{\log}(\mc O)=\chi_1^{\log}(\mc O^{1})=25, \chi_2^{\log}(\mc O)=81+25=106 \text{ and } \chi_2^{\log}(\mc O^{1})=106-9/9=105.
        $$  
         Note that $\chi_2^{\log}(\mc O)>4 \chi_1^{\log}(\mc O)$ and  $\chi_2^{\log}(\mc O^{1})>4 \chi_1^{\log}(\mc O^{1})$. 
        \item $\langle 3,7\rangle$  which has gap sequence equal to $\{1,2,4,5,9\}$.  We have (by \cite[Table 1]{AFS16})   
        $$
        \chi_1^{\log}(\mc O)=\chi_1^{\log}(\mc O^{1})=21, \chi_2^{\log}(\mc O)=81+21=102 \text{ and } \chi_2^{\log}(\mc O^{1})=102-9/9=101.
        $$  
         Note that $\chi_2^{\log}(\mc O)>4 \chi_1^{\log}(\mc O)$ and  $\chi_2^{\log}(\mc O^{1})>4 \chi_1^{\log}(\mc O^{1})$. 
        \item $\langle 4,5,11\rangle$  which has gap sequence equal to $\{1,2,3,6,9\}$.  We have (by \cite[Table 1]{AFS16})   
        $$
        \chi_1^{\log}(\mc O)=\chi_1^{\log}(\mc O^{1})=21, \chi_2^{\log}(\mc O)=81+21=102 \text{ and } \chi_2^{\log}(\mc O^{1})=102-9/9=101.
        $$  
         Note that $\chi_2^{\log}(\mc O)>4 \chi_1^{\log}(\mc O)$ and  $\chi_2^{\log}(\mc O^{1})>4 \chi_1^{\log}(\mc O^{1})$. 
         \item $\langle 4,6,7\rangle$  which has gap sequence equal to $\{1,2,3,5,9\}$.  We have (by \cite[Table 1]{AFS16})   
        $$
        \chi_1^{\log}(\mc O)=\chi_1^{\log}(\mc O^{1})=20, \chi_2^{\log}(\mc O)=81+20=101 \text{ and } \chi_2^{\log}(\mc O^{1})=101-9/9=100.
        $$  
         Note that $\chi_2^{\log}(\mc O)>4 \chi_1^{\log}(\mc O)$ and  $\chi_2^{\log}(\mc O^{1})>4 \chi_1^{\log}(\mc O^{1})$. 
         \item $\langle 5,6\rangle$  which has gap sequence equal to $\{1,2,3,4,9\}$.  We have (by \cite[Table 1]{AFS16})   
        $$
        \chi_1^{\log}(\mc O)=\chi_1^{\log}(\mc O^{1})=19, \chi_2^{\log}(\mc O)=81+19=100 \text{ and } \chi_2^{\log}(\mc O^{1})=100-9/9=99.
        $$  
         Note that $\chi_2^{\log}(\mc O)>4 \chi_1^{\log}(\mc O)$ and  $\chi_2^{\log}(\mc O^{1})>4 \chi_1^{\log}(\mc O^{1})$. 
    \end{itemize}
    
\end{enumerate}

\end{proof}

\begin{theorem}\label{T:destab} 
Fix $5/9 \le \alpha \le 1$. If $(C,p_i)\in \mc U_{g,n}(\alpha)$, then the singularities of $(C,p_i)$ are at worst those listed in Table \ref{t:anyS} with $\alpha$-invariant greater than or equal to $\alpha$ and in such a way that  the dangling branches have arithmetic genus $0$ and no marked points. 
\end{theorem}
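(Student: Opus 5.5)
Let $(C,p_i)\in \mc U_{g,n}(\alpha)$ with $5/9\le\alpha\le 1$, and suppose $q\in C$ is a singular point that is not a node. By Corollary \ref{C:lim-O}, the point $q$ determines a Gorenstein non-nodal singularity $\mc O$ with $\bb G_m$-action and $\un c(\mc O)=\un c(\wh{\mc O}_{C,q})$. Theorem \ref{T:deg-sing} then gives a test configuration $T$ with $T(1)=(C,p_i)$. Its central fiber contains the $S$-dangling atom $\ov X(\mc O)^S$. Here $S$ is the set of branches of $q$ that lie on a rational connected component of $(C^\nu,\nu^{-1}(q))$ carrying no marked point and meeting the rest only at that preimage; these are exactly the components contracted in Theorem \ref{T:deg-sing}\ref{thm:deg-sing1}. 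Since $C$ is $\alpha$-semistable, $\wei_T\ge 0$, so by Corollary \ref{C:deg-sing} we get $w_\alpha(\mc O^S)\ge 0$.

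We need to turn $w_\alpha(\mc O^S)\ge0$ into a bound on the $\alpha$-invariant. Since $\chi_1^{\log}(\mc O^S)>0$, write $r=\chi_2^{\log}/\chi_1^{\log}$. Then
\[
w_\alpha(\mc O^S)=\chi_1^{\log}\bigl((\alpha-2)r+13-13\alpha\bigr),
\]
which is affine in $\alpha$. Nonnegativity at some $\alpha\ge 5/9$ forces $(\alpha-2)r+13-13\alpha\ge0$. At $\alpha=5/9$ this reads $-\tfrac{13}{9}r+\tfrac{52}{9}\ge0$, i.e. $r\le 4$. For larger $\alpha$ the admissible range of $r$ shrinks: solving for $r$ gives $r\le (13-13\alpha)/(2-\alpha)$, and the right-hand side is decreasing in $\alpha$. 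Hence $\chi_2^{\log}(\mc O^S)\le 4\chi_1^{\log}(\mc O^S)$. By Remark \ref{R:alpha-inv}\ref{R:alpha-inv2}, in the range $r\le 13/2$ where $\alpha(\mc O^S)<1$, this is equivalent to $\alpha\le\alpha(\mc O^S)$. Now Lemma \ref{l:4} applies, provided $\mc O$ is smoothable. This holds because $C$ is smoothable, so $\wh{\mc O}_{C,q}$ is smoothable, and smoothability is preserved under the isotrivial degeneration (equivalently, the central fiber lies in the closed component $\mc U^{sm}_{g,n}$, so its atom is smoothable). Lemma \ref{l:4} then puts $\mc O^S$ in Table \ref{t:anyS} with $\alpha(\mc O^S)\ge\alpha$.

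It remains to identify the singularity type of $q$ itself with the table entry, and to justify the statement about dangling branches. For the latter: the dangling branches of $\mc O^S$ correspond precisely to the branches of $q$ lying on rational components with no marked points attached only at $q$, i.e. to tails of arithmetic genus $0$ with no marked points. So the entry $\mc O^S$ records exactly which branches of $q$ are such tails, and a branch labelled dangling in the table must be of this kind. For the type, $\wh{\mc O}_{C,q}$ and $\mc O$ share $(g,b,\un c)$. Most rows of Table \ref{t:anyS} have a unique Gorenstein singularity with the given invariants ($A_k$, $D_k$, and so on), but some do not.

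I expect the main obstacle to be the rows with $\un c=(6)$, $(4,4)$ and $(3,3,3,3)$, where several singularities share the invariants. For these I would argue as follows. Any $\wh{\mc O}_{C,q}$ with those invariants degenerates to one of the $\bb G_m$-singularities enumerated in the proof of Lemma \ref{l:4}. Those excluded there (for example $E_6$ without the dangling branch, or the $\mathcal H(2,2)^{odd}$ singularity) have $r>4$ and are therefore destabilized. If $\mc O$ is an allowed one such as $A_7$, I would use semicontinuity of the semigroup/value set under degeneration, together with the territory parametrization (the limit has the same conductor and the closed $\bb G_m$-orbit is determined by the Gorenstein limit), to deduce that $\wh{\mc O}_{C,q}$ is itself analytically of that type. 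If this identification fails in some case, I would settle for stating the conclusion in terms of the limit singularity $\mc O$, which is what the table literally records.
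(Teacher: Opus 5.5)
Up to the application of Lemma \ref{l:4} your argument is the paper's. Your monotonicity derivation of $\chi_2^{\log}(\mc O^S)\le 4\chi_1^{\log}(\mc O^S)$ is just an unpacked form of Remark \ref{R:alpha-inv}. The smoothability and dangling-branch steps are also fine.

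\textbf{The gap is the last step.} Lemma \ref{l:4} only tells you that the $\bb G_m$-limit $\mc O=\lim_{\lambda\to 0}\lambda\cdot\wh{\mc O}_{C,q}$ appears in Table \ref{t:anyS}. The theorem, however, is about $\wh{\mc O}_{C,q}$ itself, which need not carry a $\bb G_m$-action. Your three moves here do not close this:
\begin{enumerate}
\item The claim that ``most rows have a unique Gorenstein singularity with the given invariants'' is asserted without proof. Lemma \ref{l:4} enumerates only singularities with $\bb G_m$-action.
\item For the other rows, appealing to ``semicontinuity of the value set'' and the territory parametrization does not place $\wh{\mc O}_{C,q}$ in the orbit of $\mc O$. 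Many distinct $\bb G_m$-orbits in $\Ter^o_{\un c}$ can specialize to the same fixed point.
\item The announced fallback, stating the conclusion for $\mc O$, proves a strictly weaker theorem.
\end{enumerate}

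\textbf{The missing ingredient is rigidity.} The paper uses it in its final sentence: the singularities in Table \ref{t:anyS} have no non-trivial isotrivial deformations. That is, whenever $\mc O$ lies in the closure of the $\bb G_m$-orbit of $\wh{\mc O}_{C,q}$, one has $\wh{\mc O}_{C,q}\cong\mc O$. With this, the row-by-row case distinction becomes unnecessary.

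For the ADE entries you can prove it directly:
\begin{enumerate}
\item Embedding dimension and multiplicity are upper semicontinuous along the isotrivial family. Hence $\wh{\mc O}_{C,q}$ is a plane curve germ.
\item Since $\delta$ and $b$ are preserved, Milnor's formula $\mu=2\delta-b+1$ shows the family is $\mu$-constant.
\item The $\mu$-constant stratum in the miniversal deformation of a simple singularity is a point, so $\wh{\mc O}_{C,q}\cong\mc O$.
\end{enumerate}

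The row $\mathcal H(1,1,1,1)^{\{1,2,3,4\}}$ is not covered by this, because the planar member has a modulus and there is also a non-planar member. It needs a separate check, for instance that an ordinary plane quadruple point is analytically equivalent to its tangent cone, plus an argument for the space-curve member.
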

\begin{proof}
  Pick a non-nodal (Gorenstein) point $q\in C$ and consider the morphism $T:\Theta\to \mc U_{g,n}$ with $T(1)=(C,p_i)$ associated to the test configuration $f:(\mc C,\sigma_i)\to \bb A^1$ for $C$ constructed in Theorem \ref{T:deg-sing}. 
  Corollary \ref{C:deg-sing} and the assumption that $(C,p_i)\in \mc U_{g,n}(\alpha)$ imply that 
  $$\wei_T(K+\psi+\alpha(\delta-\psi))=w_{\alpha}(\mc O^S)\geq 0.$$
Then, we apply Remark \ref{R:alpha-inv}\eqref{R:alpha-inv2} in order to get that 
$$
\frac{5}{9}\leq \alpha \leq \alpha(\mc O^S)<1.
$$
Using that $(C,p_i)$ is smoothable, which implies that $\ov X(\mc O)^S$ is smoothable, we are in a position of applying Lemma \ref{l:4}: we deduce that $\mc O^S$ is one of the singularities listed in Table \ref{t:anyS} and with $\alpha(\mc O^S)\geq \alpha$. 
We now conclude using that the singularities listed in Table \ref{t:anyS} have no non-trivial isotrivial deformations (and hence that $\wh{\mc O}_{C,q}\cong \mc O$) and the discussion following Theorem \ref{T:deg-sing}. 
\end{proof}

\begin{corollary}\label{C:destab9/11}
  If $(C,p_i)\in \mc U_{g,n}$ is smoothable and non-nodal then 
  $(C,p_i)$ is not $\alpha$-semistable for every $\alpha> 9/11$.
\end{corollary}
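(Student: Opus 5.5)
The plan is to argue by contradiction using the test configuration of Theorem \ref{T:deg-sing} together with the classification in Table \ref{t:anyS}. Suppose $(C,p_i)\in \mc U_{g,n}$ is smoothable, has a non-nodal point $q$, and is $\alpha$-semistable for some $\alpha>9/11$. Since $\alpha\ge 5/9$, Theorem \ref{T:destab} applies. It says that every non-nodal singularity of $C$ appears in Table \ref{t:anyS} with $\alpha$-invariant $\alpha(\mc O^S)\ge \alpha>9/11$. Every entry of the table, however, has $\alpha$-invariant at most $9/11$; the maximum is $9/11$, attained by $A_2$. This is a contradiction.

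To make the argument self-contained rather than just quoting Theorem \ref{T:destab}, I would redo the key step directly. Let $\mc O$ be the $\bb G_m$-singularity attached to $\wh{\mc O}_{C,q}$ by Corollary \ref{C:lim-O}. Theorem \ref{T:deg-sing} gives a map $T:\Theta\to\mc U_{g,n}$ with $T(1)=(C,p_i)$. By Corollary \ref{C:deg-sing},
$$\wei_T(K+\psi+\alpha(\delta-\psi))=w_\alpha(\mc O^S)=(\alpha-2)\chi_2^{\log}(\mc O^S)+13(1-\alpha)\chi_1^{\log}(\mc O^S).$$
Semistability forces this weight to be $\ge 0$. Since $\chi_1^{\log}>0$ and $\alpha-2<0$, this gives
$$\frac{\chi_2^{\log}(\mc O^S)}{\chi_1^{\log}(\mc O^S)}\le \frac{13(1-\alpha)}{2-\alpha}<\frac{13\cdot 2/11}{13/11}=2,$$
where the last bound holds for $\alpha>9/11$ because the middle expression is decreasing in $\alpha$. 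So it suffices to show that $\chi_2^{\log}(\mc O^S)\ge 2\chi_1^{\log}(\mc O^S)$ for every smoothable non-nodal $\mc O$ and every admissible $S$.

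In that regime we certainly have $\chi_2\le 4\chi_1$, so Lemma \ref{l:4} puts $\mc O^S$ in Table \ref{t:anyS}. There, every entry has $\chi_2/\chi_1\ge 2$, with equality only for $A_2$, which yields the contradiction. Alternatively, one can argue from the formulas in the proof of Lemma \ref{l:4}. We have $\chi_2^{\log}(\mc O^S)=\chi_1^{\log}+(2g-2+b)l-\sum_{i\in S}l/(c_i-1)$, and one checks that $(2g-2+b)l-\sum_{i\in S}l/(c_i-1)\ge \chi_1$ using Clifford's bound \eqref{E:equa3} and $c_i\ge 3$ on $S$. This step is the main point to verify. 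I expect it to follow from the same inequality manipulation as \eqref{e:inequality}, with $4$ replaced by $2$, which forces a very small genus and finitely many cases already in the table.
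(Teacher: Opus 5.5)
Your proposal is correct and follows the paper's route: the corollary follows from Theorem~\ref{T:destab}, since every entry of Table~\ref{t:anyS} has $\alpha$-invariant at most $9/11$ (equivalently $\chi_2^{\log}\ge 2\chi_1^{\log}$, with equality only for $A_2$), whereas semistability for some $\alpha>9/11$ would force $\chi_2^{\log}<2\chi_1^{\log}$. Your Clifford-based alternative also works, and more cleanly than you expected: the condition it produces, $3g-5+2b-\sum_{i\in S}\frac{2}{c_i-1}<0$, has no solutions at all, so the table is not needed. Indeed, for $S=\emptyset$ one has $3g-5+2b\ge 0$; and if $S\neq\emptyset$, then some $c_i\ge 3$ forces $g\ge 2$, so the left side is at least $3g-5+b\ge 2$.
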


In the last part of this Section, we want to compare the stacks $\mc U_{g,n}(\alpha)$ for $\alpha>2/3-\epsilon$ with the first steps of the Hassett-Keel program carried over by  Alper-Fedorchuck-Smyth-van der Wyck \cite{AFSV1, AFS2, AFS3}.  

Firs of all, recall the definitions of the stacks 
$\ov{\mc M}_{g,n}(\alpha)$ for $\alpha\in (2/3 -\varepsilon,1]$ introduced in \cite{AFSV1}, referring to loc. cit. for the terminology. 

\begin{definition}[{\cite[Definition 2.5]{AFSV1}}]\label{D:Mgalpha}
For each $\alpha\in (2/3 -\varepsilon,1]$, where $0<\varepsilon \ll 1$, define $\ov{\mc M}_{g,n}(\alpha)$ to be the substack of
$\mathcal U_{g,n}$ whose objects are pointed curves $(C,p_i)$ satisfying the
following conditions, according to the value of~$\alpha$:
\begin{enumerate}
    \item if $\alpha\in(9/11,1]$, then $C$ has only $A_1$-singularities;
    \item if $\alpha=9/11$, then $C$ has only $A_1, A_2$-singularities;
    \item if $\alpha\in(7/10,9/11)$, then $(C,p_i)\in\ov{\mc M}_{g,n}(9/11)$ and does not contain
    \begin{itemize}
        \item $A_1$-attached elliptic tails;
    \end{itemize}
    \item if $\alpha=7/10$, then $C$ has only $A_1, A_2,A_3$-singularities and does not contain
    \begin{itemize}
        \item $A_1,A_3$-attached elliptic tails;
    \end{itemize}
     \item if $\alpha \in (2/3, 7/10)$, then $(C,p_i)\in\ov{\mc M}_{g,n}(7/10)$  and does not contain
    \begin{itemize}
        \item $A_1/A_1$-attached elliptic chains;
    \end{itemize}
     \item if $\alpha=2/3$, then $C$ has only $A_1, A_2,A_3,A_4$-singularities and does not contain
    \begin{itemize}
        \item $A_1,A_3,A_4$-attached elliptic tails;
        \item $A_1/A_1, A_1/A_4, A_4/A_4$-attached elliptic chains;
    \end{itemize}
    \item if $\alpha \in (2/3-\varepsilon, 2/3)$, then $(C,p_i)\in\ov{\mc M}_{g,n}(2/3)$  and does not contain
    \begin{itemize}
        \item $A_1$-attached Weierstrass chains.
    \end{itemize}
\end{enumerate}
\end{definition}

The following result compares our stacks $\mc U_{g,n}(\alpha)$ with the stacks $\ov{\mc M}_{g,n}(\alpha)$ of \cite{AFSV1}, for $\alpha \in  (2/3-\varepsilon,1]$. 

\begin{theorem}\label{thm:U(alpha)}
Let  $\alpha \in  (2/3-\varepsilon,1]$, where $0 < \varepsilon \ll 1$. Then
$$
\mc U_{g,n}(\alpha) \subset \ov{\mc M}_{g,n}(\alpha).
$$
\end{theorem}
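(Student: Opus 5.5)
We need to show that a curve $(C,p_i)\in\mc U_{g,n}(\alpha)$ satisfies the conditions of Definition \ref{D:Mgalpha} for the given $\alpha$. There are two kinds of conditions: restrictions on the singularities, and the exclusion of certain attached subcurves (elliptic tails, elliptic chains, Weierstrass chains). We treat them separately.

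\emph{Singularities.} We start from Theorem \ref{T:destab}, which applies because $2/3-\varepsilon\geq 5/9$. Any non-nodal singularity of $C$ must appear in Table \ref{t:anyS} with $\alpha(\mc O^S)\geq\alpha$. For $\alpha>9/11$ this leaves only nodes. For $\alpha\in(7/10,9/11]$ it leaves only $A_2$. For $\alpha\in(2/3,7/10]$ it leaves $A_2$, $A_3$, and the dangling singularities $A_4^{\{1\}}$ and $A_5^{\{1,2\}}$. For $\alpha\in(2/3-\varepsilon,2/3]$ it additionally allows $A_4$. We must then exclude the dangling entries of the table. By Theorem \ref{T:destab}, a dangling branch of genus $0$ without marked points is attached to a smooth rational curve meeting the rest of $C$ only at that branch.
\begin{itemize}
\item For $A_4^{\{1\}}$, the whole curve is this rational unibranch curve, so $g=2$ and $n=0$, with $C$ a rational ramphoid-cuspidal curve.
\item For $A_5^{\{1,2\}}$, the curve is two $\bb P^1$'s glued at an oscillation point.
\end{itemize}
These are exactly the degenerate cases in which $\ov{\mc M}_{g,n}(\alpha)$ is either empty or still admits them; this has to be checked against the conventions of \cite{AFSV1}. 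If these curves are not allowed there, we destabilize them with a Chen--Yu test configuration, Theorem \ref{T:deg-CY} and Corollary \ref{C:deg-CY}. That construction replaces a rational subcurve by $\ov X(\Opp)^S$ with weight $-w_\alpha$ of a suitable atom, which is negative in the relevant range.

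\emph{Attached subcurves.} Suppose $C$ contains an $A_1$-, $A_3$- or $A_4$-attached elliptic tail, an $A_1/A_1$, $A_1/A_4$ or $A_4/A_4$-attached elliptic chain, or an $A_1$-attached Weierstrass chain. We apply Theorem \ref{T:deg-CY} to the product test configuration, using the special case Corollary following it, with $Z$ equal to the tail or chain.
\begin{itemize}
\item For an elliptic tail attached at a node, take $\omega$ the nowhere-vanishing differential, so $\div(\omega)=0$ and $m_1=0$. This gives $c=(2)$, and the central fiber replaces $Z$ by a cuspidal atom $\ov X(\Opp)$ of type $A_2$. By Corollary \ref{C:deg-CY}, the weight is $-w_\alpha(A_2)$, which is negative for $\alpha<9/11$ by Remark \ref{R:alpha-inv}.
\item For an elliptic chain attached at two nodes, take $\omega$ with trivial divisor. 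This gives $c=(2,2)$, the tacnodal atom, with threshold $7/10$.
\item For a Weierstrass chain, take $\omega$ vanishing to order $2$ at the Weierstrass point. This gives $c=(4)$, type $A_4$, with threshold $2/3$.
\end{itemize}

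The main obstacle is the case where the tail or chain is attached through a non-nodal singularity, such as an $A_3$- or $A_4$-attached tail. Theorem \ref{T:deg-CY} requires $Z\cap Z^c$ to consist of nodes fixed by $\bb G_m$. We would first apply Theorem \ref{T:deg-sing} to isotrivially degenerate the attaching singularity $A_3$ or $A_4$ to its atom, with the attaching point fixed. We then apply Theorem \ref{T:deg-CY} to that non-product test configuration $\wh T$ with $Z$ the tail, now meeting the rest of the curve in nodes. The weights add: $\wei_T=\wei_{\wh T}-w_\alpha(\mc O^S)$. We must then check that the resulting total weight is negative precisely for $\alpha$ below the AFSV threshold. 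This requires an explicit computation of the characters of the combined central fiber, and that is where the numerics of the thresholds $7/10$ and $2/3$ must come out exactly.
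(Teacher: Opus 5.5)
Your architecture is the paper's: Theorem \ref{T:destab} for the singularity conditions, Theorem \ref{T:deg-CY} for nodally attached tails, and Theorem \ref{T:deg-sing} followed by Theorem \ref{T:deg-CY}, with additive weights, for non-nodal attachments. However, your treatment of chains only works when there are no internal tacnodes.

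\textbf{Chains with internal tacnodes.} An elliptic chain $E_1\cup\cdots\cup E_r$ with $r\ge 2$ has $r-1$ tacnodes. Hence $p_a(Z)=2r-1$ and $\deg\omega_Z=4r-4>0$. So there is no $\omega$ with trivial divisor. Moreover, any $\omega$ allowed in Theorem \ref{T:deg-CY} produces an atom of genus $p_a(Z)=2r-1$, not $\ov X(^{\opp}A_3)$.

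The Weierstrass chain $E_1\cup\cdots\cup E_r\cup D$ with $r\ge 1$ has the same problem. The Weierstrass point of $D$ is the tacnode $E_r\cap D$, not the attaching node, and $Z$ has genus $2r+2$, so you do not get $\un c=(4)$.

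The missing step, which is what the paper does, is to apply Theorem \ref{T:deg-sing} at \emph{every} internal tacnode, not only at the attaching points. This splits the chain into $A_1/A_1$-attached elliptic bridges (plus an $A_1$-attached Weierstrass tail). Each piece is then replaced via Theorem \ref{T:deg-CY}, with $\div\omega=0$ for bridges and $\div\omega=2q$ for the Weierstrass tail. The contributions telescope:
\begin{itemize}
\item for elliptic chains, $(r-1)w_\alpha(A_3)-r\,w_\alpha(A_3)=10\alpha-7$;
\item for Weierstrass chains, $r\,w_\alpha(A_3)-r\,w_\alpha(A_3)-w_\alpha(A_4)=39\alpha-26$.
\end{itemize}

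\textbf{The deferred numerics.} These need no new character computation. $\bb G_m$ acts trivially on the central fiber away from the atoms. So Corollaries \ref{C:deg-sing} and \ref{C:deg-CY} give the weight as a signed sum of $w_\alpha$'s read off Table \ref{t:anyS}: $w_\alpha(A_2)=9-11\alpha$, $w_\alpha(A_3)=7-10\alpha$, $w_\alpha(A_4)=26-39\alpha$. This yields:
\begin{itemize}
\item $A_3$-attached tails: $\alpha-2$;
\item $A_4$-attached tails: $17-28\alpha$;
\item $A_1/A_4$-attached chains: $19-29\alpha$;
\item $A_4/A_4$-attached chains: $45-68\alpha$.
\end{itemize}
These do \emph{not} change sign "precisely" at the AFSV thresholds. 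They change sign at $17/28$, $19/29$, $45/68$, and only their negativity on the relevant interval is needed (cf.\ Remark \ref{R:threshold}).

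\textbf{The dangling entries.} You rightly notice that Theorem \ref{T:destab} also admits $A_4^{\{1\}}$ and $A_5^{\{1,2\}}$ for $\alpha\le 7/10$. These occur only for $(g,n)=(2,0)$, and the paper's proof does not discuss them. Your proposed Chen--Yu fix, however, is unchecked. Using the differential $dt/t^4$, resp.\ $(dt_1/t_1^3,-dt_2/t_2^3)$, which vanishes at the dangling points at infinity, it has weight $40\alpha-28$, resp.\ $30\alpha-21$. This is negative for $\alpha<7/10$ but zero at $\alpha=7/10$, so it does not settle that endpoint.
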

\begin{proof}
Let $(C,p_i) \in \mc U_{g,n}(\alpha)$, i.e. $(C,p_i)$ is smoothable and $\alpha$-semistable in the sense of Definition \ref{D:Ugalpha}.
Theorem \ref{T:destab} shows that, for the given value of $\alpha$, the curve $C$ satisfies the singularity requirements defining $\ov{\mc M}_{g,n}(\alpha)$. Thus, we need to show that $C$ does not contain any of the additional configurations excluded in the Definition \ref{D:Mgalpha} of $\ov{\mc M}_{g,n}(\alpha)$. 

We are going to use the following three special cases of test configurations constructed in Theorem \ref{T:deg-CY}:
\begin{enumerate}[(A)]
    \item If $(\wh{\mc C},\wh{\sigma}_i)$ is a test configuration whose central fiber $C$ has a $\bb G_m$-invariant \emph{$A_1$-attached elliptic tail} $(E,q)$, then taking $\omega=0$ we get a new test configuration $(\mc C,\sigma_i)$ for $(\wh{\mc C}_1,\wh{\sigma}_i(1))$ whose special fiber is obtained by gluing the complementary subcurve $E^c$ with $\ov X(^{\opp}A_2)$ at $q$.
    Corollary \ref{C:deg-CY} and Table \ref{t:anyS} imply that the contribution of the newly created $\ov X(^{\opp}A_2)$-atom to the weight of $K+\psi+\alpha(\delta-\psi)$ is equal to 
    $$
    -w_{\alpha}(A_2)=(2-\alpha)\chi_2^{\log}(A_2)+(13\alpha-13)\chi_1^{\log}(A_2)=(2-\alpha)\cdot 2+(13\alpha-13)\cdot 1=11\alpha-9.
    $$
    \item If $(\wh{\mc C},\wh{\sigma}_i)$ is a test configuration whose central fiber $C$ has a $\bb G_m$-invariant \emph{$A_1/A_1$-attached elliptic bridge} $(E,q_1,q_2)$, then taking $\omega=0$ we get a new test configuration $(\mc C,\sigma_i)$ for $(\wh{\mc C}_1,\wh{\sigma}_i(1))$ whose special fiber is obtained by gluing the complementary subcurve $E^c$ with $\ov X(^{\opp}A_3)$ at $q_1$ and $q_2$. Corollary \ref{C:deg-CY} and Table \ref{t:anyS} imply that the contribution of the newly created $\ov X(^{\opp}A_3)$-atom to the weight of $K+\psi+\alpha(\delta-\psi)$ is equal to 
    $$
    -w_{\alpha}(A_3)=(2-\alpha)\chi_2^{\log}(A_3)+(13\alpha-13)\chi_1^{\log}(A_3)=(2-\alpha)\cdot3+(13\alpha-13)\cdot1=10\alpha-7.
    $$
    \item If $(\wh{\mc C},\wh{\sigma}_i)$ is a test configuration whose central fiber $C$ has a $\bb G_m$-invariant \emph{$A_1$-attached Weierstrass tail} $(D,q)$, i.e. $D$ is a genus $2$ curve and $q$ is a Weierstrass point, then taking $\omega$ such that $\div(\omega)=2q$ we get a new test configuration $(\mc C,\sigma_i)$ for $(\wh{\mc C}_1,\wh{\sigma}_i(1))$ whose special fiber is obtained by gluing the complementary subcurve $D^c$ with $\ov X(^{\opp}A_4)$ at $q$. Corollary \ref{C:deg-CY} and Table \ref{t:anyS} imply that the contribution of the newly created $\ov X(^{\opp}A_4)$-atom to the weight of $K+\psi+\alpha(\delta-\psi)$ is equal to
    $$
    -w_{\alpha}(A_4)=(2-\alpha)\chi_2^{\log}(A_4)+(13\alpha-13)\chi_1^{\log}(A_4)=(2-\alpha)\cdot13+(13\alpha-13)\cdot4=39\alpha-26.
    $$
\end{enumerate}

If $\alpha \ge 9/11$, there is nothing left to prove. This covers cases (1) and (2) in Definition \ref{D:Mgalpha}, so we start with Item (3).

\begin{enumerate}[leftmargin=*]\setcounter{enumi}{2}  
    \item Assume that $\alpha<9/11$ and suppose, by contradiction, that $(C,p_i)$
contains an $A_1$-attached elliptic tail $(E,q)$. We can apply construction (A) to $(E,q)$ in order to produce a test configuration $T$ for $(C,p_i)$ such that 
$$
\wei_T(K+\psi+\alpha(\delta-\psi))=11\alpha-9<0,
$$
which contradicts the $\alpha$-semistability of $(C,p_i)$.  


\item Assume that $\alpha \le 7/10$ and suppose, by contradiction, that
$(C,p_i)$ contains an elliptic tail $E$ attached at an $A_3$-singularity
$q\in C$. We construct a test configuration $T$ as follows. First, applying
Theorem \ref{T:deg-sing} to the singularity $q$, we replace it with an $\ov X(A_3)$-atom. The resulting curve still contains an $A_1$-attached elliptic tail, which we then replace by a $\ov X(^{\opp}A_2)$-atom as in the construction (A), 
see Figure \ref{F:A3elliptictail}.

\begin{figure}[!h]
	\centering
	
	\begin{tikzpicture}[scale=0.6, every node/.style={font=\scriptsize}]

		\coordinate (x1) at (0-1, 1);
		\coordinate (y1) at (0.5-1, 0);
		\coordinate (z1) at (0-1, -2.5);
		\node[left] at (z1) {$g-2$};
		\draw [very thick] (x1) to[in=90, out=-45] (y1) to[in=105, out=-90] (z1);
		
		\coordinate (a1) at (1.5-1, 0.7);
		\coordinate (b1) at (0.5-1, 0);
		\coordinate (c1) at (2.5-1, -0.5);
		\node[above] at (c1) {1};
		\draw [very thick] (a1) to[out=205, in=90] (b1) to[out=-90, in=180] (c1);
		
		\node[left] at (y1) {$A_3$};
		
		
		\draw[very thick, ->,
		line join=round,
		decorate, decoration={
			zigzag,
			segment length=4,
			amplitude=.9, post=lineto,
			post length=2pt
		}] (2.5, -0.5) -- (3.5, -0.5);
		
		
		\coordinate (b2) at (6.5, 0.2);       
		\coordinate (c2) at (6.0, 0.8);       
		\coordinate (d2) at (7.0, 0.8);       
		\node[above] at (c2) {0};
		\node[above] at (d2) {0};
		
		\draw [very thick] (4.9, -0.5) to[out=10, in=-90] (b2) to[out=90, in=-25] (c2);
		
		\draw [very thick] (d2) to[out=205, in=90] (b2) to[out=-90, in=170] (8.1, -0.5);
		
		\coordinate (x3) at (5.1, 1);
		\coordinate (y3) at (5.5, 0); 
		\coordinate (z3) at (5.1, -2.5);
		\node[left] at (z3) {$g-2$};
		\draw [very thick] (x3) to[in=90, out=-45] (y3) to[in=105, out=-90] (z3);
		
		\coordinate (x4) at (7.8, 0.5);       
		\coordinate (y4) at (7.5, 0);         
		\coordinate (z4) at (7.8, -1.5);      
		\node[right] at (z4) {1};
		\draw [very thick] (x4) to[in=90, out=-135] (y4) to[in=75, out=-90] (z4);
		
		\node[below] at (6.5, -0.5) {$+$};
		\node[left] at (6.5, 0.2) {$A_3$};
		
		\draw[very thick, ->,
		line join=round,
		decorate, decoration={
			zigzag,
			segment length=4,
			amplitude=.9, post=lineto,
			post length=2pt
		}] (9.6, -0.5) -- (10.6, -0.5);
		
	
		\coordinate (b3) at (13.5, 0.2);      
		\coordinate (c3) at (13.0, 0.8);      
		\coordinate (d3) at (14.0, 0.8);      
		\node[above] at (c3) {0};
		\node[above] at (d3) {0};
		\node[left] at (13.5, 0.2) {$A_3$};
		\draw [very thick] (11.9, -0.5) to[out=10, in=-90] (b3) to[out=90, in=-25] (c3);
		
		\draw [very thick] (d3) to[out=205, in=90] (b3) to[out=-90, in=170] (15.6, -0.5);
		
		\coordinate (x5) at (12.1, 1);
		\coordinate (y5) at (12.5, 0); 
		\coordinate (z5) at (12.1, -2.5);
		\node[left] at (z5) {$g-2$};
		\draw [very thick] (x5) to[in=90, out=-45] (y5) to[in=105, out=-90] (z5);
		
		\coordinate (cusp_start) at (14.9, -0.6); 
		\coordinate (cusp_tip) at (15.9, 0.8);    
		\coordinate (cusp_end) at (16.9, -0.6);
		\node[above] at (cusp_end) {0};
		
		\draw [very thick] (cusp_start) .. controls (15.4, -0.3) and (15.9, 0.3) .. (cusp_tip);
		\draw [very thick] (cusp_end) .. controls (16.4, -0.3) and (15.9, 0.3) .. (cusp_tip);
		
		\node[below] at (13.5, -0.5) {$+$};
		\node[below] at (15.9, -0.5) {$-$};
		\node[left] at (16.3, 1.2) {$A_2$};
	\end{tikzpicture}
	\caption{A genus $g$ curve with an $A_3$-attached elliptic tail. First the $A_3$-singularity is replaced by an $\ov X(A_3)$-atom and then the $A_1$-attached elliptic tails is replaced by an $\ov X(^{\opp}A_2)$-atom.}\label{F:A3elliptictail}
\end{figure}

We now compute the weight at $T$ as the sum of the contributions of the two atoms $\ov X(A_3)$ and $\ov X(^{\opp}A_2)$ in the central fiber:
$$\wei_T(K+\psi+\alpha(\delta-\psi))=w_{\alpha}(A_3)-w_{\alpha}(A_2)=(7-10\alpha)-(9-11\alpha)=\alpha-2<0,$$
contradicting the $\alpha$-semistability of $(C,p_i)$.

\item Assume that $\alpha<7/10$ and suppose, by contradiction, that
$(C,p_i)$ contains an $A_1/A_1$-attached elliptic chain $E_1\cup\cdots\cup E_r$ of length $r\geq 1$.  We construct a test configuration
$T$ as follows. First, for every
$A_3$-singularity joining two consecutive elliptic components, we apply
Theorem \ref{T:deg-sing} to replace it with an $\ov X(A_3)$-atom. The resulting
curve consists of a chain of $A_1/A_1$-attached elliptic bridges joined by
$\ov X(A_3)$-atoms. We then replace $A_1/A_1$-attached elliptic bridge  by a $\ov X(^{\opp}A_3)$-atom as in the construction (B), see Figure \ref{F:ellipticbridge}.

 \begin{figure}[!h]
	\centering
	\begin{tikzpicture}[scale=0.4, every node/.style={font=\scriptsize}]

\coordinate (x) at (-0.5,2);
\coordinate (y) at (-0.5, -2.5);
\draw [very thick, in=105, out=-45] (x) to (y);

\coordinate (a) at (-1, 0);
\coordinate (b) at (2, 0.7);
\coordinate[label=above:1] (c) at (1, 1.5);
\draw [very thick] (a)   to[out=0, in=-90]  (b)  to[out=90, in=-25] (c);

\coordinate[label=above:1] (d) at (0+3, 1.5);
\coordinate (e) at (1+4, 0);
\draw [very thick] (d)  to[out=205, in=90]   (b)  to[out=-90, in=180] (e);

\coordinate (w) at (4.5,2);
\coordinate (z) at (4.5, -2.5);
\draw [very thick, in=75, out=-125] (w) to (z);

	\draw[very thick, ->,
line join=round,
decorate, decoration={
	zigzag,
	segment length=4,
	amplitude=.9, post=lineto,
	post length=2pt
}] (6.5, 0.5) -- (7.5, 0.5);

\coordinate (x) at (-0.5+10,2);
\coordinate (y) at (-0.5+10, -2.5);
\draw [very thick, in=105, out=-45] (x) to (y);

\coordinate (a) at (-1+10, 0);
\coordinate (b) at (2+10, 0.7);
\coordinate[label=above:1] (c) at (1+10, 1.5);
\draw [very thick] (a)   to[out=0, in=-90]  (b)  to[out=90, in=-25] (c);

\coordinate (A) at (-1+12, 0.5);
\coordinate (B) at (2+11, 1.2);
\coordinate[label=above:0] (C) at (1+11, 2);
\draw [very thick] (A)   to[out=0, in=-90]  (B)  to[out=90, in=-25] (C);
\node[below] at (2+11, 0.7) {$+$};

\coordinate[label=above:0] (D) at (0+3+11, 2);
\coordinate (E) at (1+4+10, 0.5);
\draw [very thick] (D)  to[out=205, in=90]   (B)  to[out=-90, in=180] (E);

\coordinate[label=above:1] (d) at (0+3+12, 1.5);
\coordinate (e) at (1+4+12, 0);
\draw [very thick] (d)  to[out=205, in=90]   (2+12, 0.7)  to[out=-90, in=180] (e);

\coordinate (w) at (4.5+12,2);
\coordinate (z) at (4.5+12, -2.5);
\draw [very thick, in=75, out=-125] (w) to (z);

\draw[very thick, ->,
line join=round,
decorate, decoration={
	zigzag,
	segment length=4,
	amplitude=.9, post=lineto,
	post length=2pt
}] (18, 0.5) -- (19, 0.5);


\coordinate (x) at (-0.5+21,2);
\coordinate (y) at (-0.5+21, -2.5);
\draw [very thick, in=105, out=-45] (x) to (y);

\coordinate (a) at (-1+21, 0);
\coordinate (b) at (2+21, 0.7);
\coordinate[label=above:0] (c) at (1+21, 1.5);
\draw [very thick] (a) to[out=0, in=-90] (b) to[out=90, in=-25] (c);
\node[below] at (2+21, 0.2) {$-$};
\coordinate (d) at (3+21, 1.5);
\coordinate (e) at (5+21, 0.7);
\coordinate[label=above:0] (f) at (4+21, 1.5); 
\draw [very thick] (d) to[out=205, in=90] (b) to[out=-90, in=-90] (e) to[out=90, in=-25] (f);

\coordinate (A) at (-1+12+14, 0.5);
\coordinate (B) at (2+11+14, 1.2);
\coordinate[label=above:0] (C) at (1++11+14, 2);
\draw [very thick] (A)   to[out=0, in=-90]  (B)  to[out=90, in=-25] (C);
\node[below] at (2+11+14, 0.7) {$+$};

\coordinate[label=above:0] (D) at (0+3+11+14, 2);
\coordinate (E) at (1+4+10+14, 0.5);
\draw [very thick] (D)  to[out=205, in=90]   (B)  to[out=-90, in=180] (E);

\coordinate (d) at (3+21+4.5, 1.5);
\coordinate (e) at (5+21+4.5, 0.7);
\coordinate[label=above:0] (f) at (4+21+4.5, 1.5); 
\draw [very thick] (d) to[out=205, in=90] (2+21+5, 0.7) to[out=-90, in=-90] (e) to[out=90, in=-25] (f);
\node[below] at (4+21+5.5, 0.2) {$-$};
\coordinate[label=above:0] (l) at (6+21+4.5, 1.5);
\coordinate (m) at (8+21+4.5, 0);
\draw [very thick] (l) to[out=205, in=90] (5+21+4.5, 0.7) to[out=-90, in=180] (m);

\coordinate (w) at (7.5+21+4.5,2);
\coordinate (z) at (7.5+21+4.5, -2.5);
\draw [very thick, in=75, out=-125] (w) to (z);

\end{tikzpicture}
	\caption{A curve with an $A_1/A_1$-attached elliptic chain of length $r=2$. First the $A_3$-singularity is replaced by an $\ov X(A_3)$-atom and then the $A_1/A_1$-attached elliptic bridges are replaced by  $\ov X(^{\opp}A_3)$-atoms.}\label{F:ellipticbridge}
\end{figure}

We now compute the weight at $T$ as the sum of the contributions of the $(r-1)$ atoms $\ov X(A_3)$ and of the $r$ atoms $\ov X(^{\opp}A_3)$ in the central fiber:
$$
\wei_T(K+\psi+\alpha(\delta-\psi))=(r-1)w_{\alpha}(A_3)-rw_{\alpha}(A_3)=-w_{\alpha}(A_3)=10\alpha-7<0, 
$$
which contradicts the $\alpha$-semistability of $(C,p_i)$.

\item Assume $\alpha \in (2/3-\varepsilon,2/3]$. 
Suppose, by contradiction, that $(C,p_i)$ contains an $A_4$-attached elliptic tail. Then $C$ itself is equal to the the elliptic tail, i.e. $C$ is a genus $g=3$ curve with an $A_4$-singularity and $n=0$. We first apply Theorem \ref{T:deg-sing} and we obtain a $\ov X(A_4)$-atom and an $A_1$-attached elliptic tail. Then, we apply construction (A) in order to replace the $A_1$-attached elliptic tail with a $\ov X(^{\opp}A_2)$-atom. We now compute the weight at $T$ as the sum of the contributions of the two atoms $\ov X(A_4)$ and $\ov X(^{\opp}A_2)$ in the central fiber:
$$\wei_T(K+\psi+\alpha(\delta-\psi))=w_{\alpha}(A_4)-w_{\alpha}(A_2)= (26-39\alpha)-(9-11 \alpha)=17-28\alpha<0,
$$
since $\alpha >17/28$, which contradicts the $\alpha$-semistability of $(C,p_i)$.

Suppose, by contradiction, that $(C,p_i)$ contains an $A_1/A_4$-attached elliptic chain $E=E_1\cup\cdots\cup E_r$, with $r$ bridges and
$r-1$ internal tacnodes, one end a node and the other an $A_4$. We first apply Theorem \ref{T:deg-sing} to replace each of the the $r-1$ internal tacnodes by a $\ov X(A_3)$-atom and the $A_4$ end by a $\ov X(A_4)$-atom. The resulting
curve consists of a chain of $A_1/A_1$-attached elliptic bridges joined by
$\ov X(A_3)$-atoms and with a $\ov X(A_4)$-atom at the end.  We then replace each $A_1/A_1$-attached elliptic bridge  by a $\ov X(^{\opp}A_3)$-atom as in the construction (B), see  Figure \ref{F:A4ellipticbridge}.

\begin{figure}[!h]
	\centering
	\begin{tikzpicture}[scale=0.35, every node/.style={font=\scriptsize}]
		
		\coordinate (x) at (-0.5,2);
		\coordinate (y) at (-0.5, -2.5);
		\draw [very thick, in=105, out=-45] (x) to (y);
		
		\coordinate (a) at (-1, 0);
		\coordinate (b) at (2, 0.7);
		\coordinate[label=above:1] (c) at (1, 1.5);
		\draw [very thick] (a)   to[out=0, in=-90]  (b)  to[out=90, in=-25] (c);

		\coordinate[label=above:1] (d) at (0+2.5, 1.5);
		\coordinate (e) at (1+2, 0);
		\draw [very thick] (d)  to[out=205, in=90]   (b)  to[out=-90, in=180] (e);
		
		\coordinate (x) at (-1+4,0);
		\coordinate[label=right:$A_4$] (y) at (0+4.5, 2);
		\coordinate (z) at (1+5	, -2);
		\draw [very thick] (x) to[in=-90, out=0] (y) to[in=150, out=-90] (z);

		\draw[very thick, ->,
		line join=round,
		decorate, decoration={
			zigzag,
			segment length=4,
			amplitude=.9, post=lineto,
			post length=2pt
		}] (6.5, 0.5) -- (7.5, 0.5);

		\coordinate (x) at (-0.5+10,2);
		\coordinate (y) at (-0.5+10, -2.5);
		\draw [very thick, in=105, out=-45] (x) to (y);
		
		\coordinate (a) at (-1+10, 0);
		\coordinate (b) at (2+10, 0.7);
		\coordinate[label=above:1] (c) at (1+10, 1.5);
		\draw [very thick] (a)   to[out=0, in=-90]  (b)  to[out=90, in=-25] (c);
		
		\coordinate (A) at (-1+12, 0.5);
		\coordinate (B) at (2+11, 1.2);
		\coordinate[label=above:0] (C) at (1+11, 2);
		\draw [very thick] (A)   to[out=0, in=-90]  (B)  to[out=90, in=-25] (C);
		\node[below] at (2+11, 0.7) {$+$};
		
		\coordinate[label=above:0] (D) at (0+3+11, 2);
		\coordinate (E) at (1+4+10, 0.5);
		\draw [very thick] (D)  to[out=205, in=90]   (B)  to[out=-90, in=180] (E);


		\coordinate (d) at (3+21-9, 1.5);
		\coordinate (e) at (5+21-9, 0.7);
		\coordinate[label=above:1] (f) at (4+21-9, 1.5); 
		\draw [very thick] (d) to[out=205, in=90] (2+12, 0.7) to[out=-90, in=-90] (e) to[out=90, in=-25] (f);

		
		\coordinate (x) at (19-2.5,0.5);
		\coordinate (y) at (20.5-3, 2.5);
		\coordinate[label=right:0] (z) at (21+0.5-3, 0);
		\draw [very thick] (x) to[in=-90, out=25] (y) to[in=150, out=-90] (z);

		\node[below] at (19.9+0.7-3, -0.5+0.8) {$+$};
		\node[right] at (20.3-0.7-3, 1.2+0.6+1) {$A_4$};

		\draw[very thick, ->,
		line join=round,
		decorate, decoration={
			zigzag,
			segment length=4,
			amplitude=.9, post=lineto,
			post length=2pt
		}] (20, 0.5) -- (21, 0.5);

		
		\coordinate (x) at (-0.5+21+2,2);
		\coordinate (y) at (-0.5+21+2, -2.5);
		\draw [very thick, in=105, out=-45] (x) to (y);
		
		\coordinate (a) at (-1+21+2, 0);
		\coordinate (b) at (2+21+2, 0.7);
		\coordinate[label=above:0] (c) at (1+21+2, 1.5);
		\draw [very thick] (a) to[out=0, in=-90] (b) to[out=90, in=-25] (c);
		\node[below] at (2+21+2, 0.2) {$-$};
		\coordinate (d) at (3+21+2, 1.5);
		\coordinate (e) at (5+21+2, 0.7);
		\coordinate[label=above:0] (f) at (4+21+2, 1.5); 
		\draw [very thick] (d) to[out=205, in=90] (b) to[out=-90, in=-90] (e) to[out=90, in=-25] (f);
		
		\coordinate (A) at (-1+12+14+2, 0.5);
		\coordinate (B) at (2+11+14+2, 1.2);
		\coordinate[label=above:0] (C) at (1++11+14+2, 2);
		\draw [very thick] (A)   to[out=0, in=-90]  (B)  to[out=90, in=-25] (C);
		\node[below] at (2+11+14+2, 0.7) {$+$};
		
		\coordinate[label=above:0] (D) at (0+3+11+14+2, 2);
		\coordinate (E) at (1+4+10+14+2, 0.5);
		\draw [very thick] (D)  to[out=205, in=90]   (B)  to[out=-90, in=180] (E);
		
		\coordinate (d) at (3+21+4.5+2, 1.5);
		\coordinate (e) at (5+21+4.5+2, 0.7);
		\coordinate[label=above:0] (f) at (4+21+4.5+2, 1.5); 
		\draw [very thick] (d) to[out=205, in=90] (2+21+5+2, 0.7) to[out=-90, in=-90] (e) to[out=90, in=-25] (f);
		\node[below] at (4+21+5.5+2, 0.2) {$-$};
		\coordinate[label=above:0] (l) at (6+21+4.5+2, 1.5);
		\coordinate (m) at (8+21+4.5+1.3, 0);
		\draw [very thick] (l) to[out=205, in=90] (5+21+4.5+2, 0.7) to[out=-90, in=180] (m);
		
		
		\coordinate (x) at (19-2.5+17.2,-0.5);
		\coordinate (y) at (20.5-3+18, 2.5);
		\coordinate[label=right:0] (z) at (21+0.5-3+18, 0);
		\draw [very thick] (x) to[in=-90, out=25] (y) to[in=150, out=-90] (z);

		\node[below] at (19.9+0.7-3+18, -0.5+0.8) {$+$};
		\node[right] at (20.3-0.7-3+18, 1.2+0.6+1) {$A_4$};
	\end{tikzpicture}
	\caption{A curve with an $A_1/A_4$-attached elliptic bridge of length $r=2$. First the $A_3$-singularity is replaced by an $\ov X(A_3)$-atom and the $A_4$-singularity by an $\ov X(A_4)$-atom, then the elliptic bridges are replaced by  $\ov X(^{\opp}A_3)$-atoms.}\label{F:A4ellipticbridge}
\end{figure}

We now compute the weight at $T$ as the sum of the contributions of the $(r-1)$ atoms $\ov X(A_3)$,  of the $r$ atoms $\ov X(^{\opp}A_3)$ and of the single atom $\ov X(A_4)$ in the central fiber:
$$\begin{aligned}
& \wei_T(K+\psi+\alpha(\delta-\psi)) 
=(r-1)w_{\alpha}(A_3)-rw_{\alpha}(A_3)+w_{\alpha}(A_4)=\\
& =-w_{\alpha}(A_3)+w_{\alpha}(A_4)= 
-(7-10\alpha)+(26-39\alpha)=19-29\alpha<0,    
\end{aligned}$$
since $\alpha>19/29$, which contradicts the $\alpha$-semistability of $(C,p_i)$.

Suppose, by contradiction, that $(C,p_i)$ contains an $A_4/A_4$-attached elliptic chain $E= E_1\cup\cdots\cup E_r$, with $r\geq 1$ bridges and
$r-1$ internal tacnodes, and two $A_4$-singularities at the two ends. The $C$ is itself equal to the entire chain, and hence it has odd arithmetic genus $2r+3\geq 5$ and $n=0$. We first apply Theorem \ref{T:deg-sing} to replace each of the  $r-1$ internal tacnodes by a $\ov X(A_3)$-atom and each of the two $A_4$ at the ends by a $\ov X(X_4)$-atom. The resulting curve consists of a chain of $A_1/A_1$-attached elliptic bridges joined by $\ov X(A_3)$-atoms and with two $\ov X(A_4)$-atoms at the end. 
We then replace each $A_1/A_1$-attached elliptic bridge with a $\ov X(^{\opp}A_3)$-atom as in the construction (B), see Figure \ref{F:A_4/A_4}.

\begin{figure}[!h]
	\begin{center}
		\begin{tikzpicture}[scale=0.7]
			
			\coordinate (x) at (0,0);
			\coordinate[label=above: $A_4$] (y) at (1, 2);
			\coordinate[label=below:1] (z) at (2, 1);
			\draw [very thick] (x) to[in=-90, out=25] (y) to[in=180, out=-90] (z);

			\coordinate[label=above: $A_4$] (u) at (3,2);
			\coordinate (v) at (4, 0);
			\draw [very thick] (z) to[in=-90, out=0] (u) to[in=155, out=-90] (v);
			
			\draw[very thick, ->,
			line join=round,
			decorate, decoration={
				zigzag,
				segment length=4,
				amplitude=.9, post=lineto,
				post length=2pt
			}] (4.5, 0.5) -- (5.5, 0.5);

			
			\coordinate (a) at (0+7, 2);
			\coordinate[label=below:1] (b) at (1+7, 1);
			\coordinate (c) at (2+7,2);
			\draw [very thick] (a) to[in=180, out=-90] (b) to[in=-90, out=0] (c);
			
			\coordinate[label=right:0] (x) at (0+7+0.3, 2);
			\coordinate[label=left:$A_4$] (y) at (1+7-1.2, 0.5);
			\coordinate[label=left:$+$] (z) at (0+7+0.3,-0.5);
			\draw [very thick] (x) to[in=0, out=-90] (y) to[in=90, out=0] (z);
			
			\coordinate[label=left:0] (x) at (0+7+0.2+1.5, 2);
			\coordinate[label=right:$A_4$] (y) at (1+7+1.1, 0.5);
			\coordinate[label=right:$+$] (z) at (0+7+0.2+1.5,-0.5);
			\draw [very thick] (x) to[in=180, out=-90] (y) to[in=90, out=180] (z);

			\draw[very thick, ->,
			line join=round,
			decorate, decoration={
				zigzag,
				segment length=4,
				amplitude=.9, post=lineto,
				post length=2pt
			}] (4.5+6, 0.5) -- (5.5+6, 0.5);
			

			\coordinate[label=left:0] (x) at (0+7+0.3+6, 2);
			\coordinate[label=left:$A_4$] (y) at (1+7-1.2+6, 0.5);
			\coordinate[label=left:$+$] (z) at (0+7+0.3+6,-0.5);
			\draw [very thick] (x) to[in=0, out=-90] (y) to[in=90, out=0] (z);
			
			\coordinate (A) at (-1+12+2, 0.5+1);
			\coordinate (B) at (2+11+2, 1.2+1);
			\coordinate[label=above:0] (C) at (1+11+2, 2+1);
			\draw [very thick] (A)   to[out=0, in=-90]  (B)  to[out=90, in=-25] (C);
			\node[below] at (2+11+2, 0.7+1) {$-$};
			\node[above] at (2+11+2, 1.2+1+0.5) {$A_3$};
			\coordinate[label=above:0] (D) at (0+3+11+2, 2+1);
			\coordinate (E) at (1+4+10+2, 0.5+1);
			\draw [very thick] (D)  to[out=205, in=90]   (B)  to[out=-90, in=180] (E);

			\coordinate[label=right:0] (x) at (0+7+0.2+1.5+8, 2);
			\coordinate[label=right:$A_4$] (y) at (1+7+1.1+8, 0.5);
			\coordinate[label=right:$+$] (z) at (0+7+0.2+1.5+8,-0.5);
			\draw [very thick] (x) to[in=180, out=-90] (y) to[in=90, out=180] (z);

		\end{tikzpicture}
	\end{center}
	\caption{An $A_4/A_4$ attached elliptic bridge of length $r=1$. The curve has genus 5. First the $A_4$-singularities are replaced by $\ov X(A_4)$-atoms and then the elliptic bridge is replaced by an $\ov X(^{\opp}A_3)$-atom.}\label{F:A_4/A_4}
\end{figure}

We now compute the weight at $T$ as the sum of the contributions of the $(r-1)$ atoms $\ov X(A_3)$,  of the $r$ atoms $\ov X(^{\opp}A_3)$ and of the two atoms $\ov X(A_4)$ in the central fiber:
$$\begin{aligned}
& \wei_T(K+\psi+\alpha(\delta-\psi)) 
=(r-1)w_{\alpha}(A_3)-rw_{\alpha}(A_3)+2w_{\alpha}(A_4)=\\
& =-w_{\alpha}(A_3)+2w_{\alpha}(A_4)= 
-(7-10\alpha)+2(26-39\alpha)=45-68\alpha<0,    
\end{aligned}$$
since  $\alpha>45/68$., which contradicts the $\alpha$-semistability of $(C,p_i)$.

\item Assume $\alpha \in (2/3 -\varepsilon, 2/3)$. Suppose, by contradiction, that $(C,p_i)$ contains an $A_1$-attached Weierstrass tail $E=E_1 \cup \cdots \cup E_r \cup D$, where $E_1, \ldots, E_r$ are elliptic curves and $D$ is a genus 2 curve. The attaching points are $A_3$-singularities and $E_r \cap D$ is a Weierstrass point of $D$.
We first apply Theorem \ref{T:deg-sing} to replace each of the  $r$ internal tacnodes by a $\ov X(A_3)$-atom. The resulting curve consist of a chain of $r$ $A_1/A_1$-attached elliptic bridges and one $A_1$-attached Weierstrass tail, joined by $r$ $\ov X(A_3)$-atoms. We then replace each $A_1/A_1$-attached elliptic bridge with a $\ov X(^{\opp}A_3)$-atom as in the construction (B) and the $A_1$-attached Weierstrass tail with a $\ov X(^{\opp}A_4)$-atom as in the construction (C), see Figure \ref{F:weierstrass}.

\begin{figure}[!h]
	\centering
	\begin{tikzpicture}[scale=0.4, every node/.style={font=\scriptsize}]
		
		\coordinate (x) at (-0.5,2);
		\coordinate (y) at (-0.5, -2.5);
		\draw [very thick, in=105, out=-45] (x) to (y);
		
		\coordinate (a) at (-1, 0);
		\coordinate (b) at (2, 0.7);
		\coordinate[label=above:1] (c) at (1, 1.5);
		\draw [very thick] (a)   to[out=0, in=-90]  (b)  to[out=90, in=-25] (c);

		\coordinate[label=above:2] (d) at (0+3, 1.5);
		\coordinate (e) at (1+4, 0);
		\draw [very thick] (d)  to[out=205, in=90]   (b)  to[out=-90, in=180] (e);

		\draw[very thick, ->,
		line join=round,
		decorate, decoration={
			zigzag,
			segment length=4,
			amplitude=.9, post=lineto,
			post length=2pt
		}] (6.5, 0.5) -- (7.5, 0.5);

		\coordinate (x) at (-0.5+10,2);
		\coordinate (y) at (-0.5+10, -2.5);
		\draw [very thick, in=105, out=-45] (x) to (y);
		
		\coordinate (a) at (-1+10, 0);
		\coordinate (b) at (2+10, 0.7);
		\coordinate[label=above:1] (c) at (1+10, 1.5);
		\draw [very thick] (a)   to[out=0, in=-90]  (b)  to[out=90, in=-25] (c);
		
		\coordinate (A) at (-1+12, 0.5);
		\coordinate (B) at (2+11, 1.2);
		\coordinate[label=above:0] (C) at (1+11, 2);
		\draw [very thick] (A)   to[out=0, in=-90]  (B)  to[out=90, in=-25] (C);
		\node[below] at (2+11, 0.7) {$+$};
		
		\coordinate[label=above:0] (D) at (0+3+11, 2);
		\coordinate (E) at (1+4+10, 0.5);
		\draw [very thick] (D)  to[out=205, in=90]   (B)  to[out=-90, in=180] (E);

		\coordinate[label=above:2] (d) at (0+3+12, 1.5);
		\coordinate (e) at (1+4+12, 0);
		\draw [very thick] (d)  to[out=205, in=90]   (2+12, 0.7)  to[out=-90, in=180] (e);

		\draw[very thick, ->,
		line join=round,
		decorate, decoration={
			zigzag,
			segment length=4,
			amplitude=.9, post=lineto,
			post length=2pt
		}] (18, 0.5) -- (19, 0.5);

		
		\coordinate (x) at (-0.5+21,2);
		\coordinate (y) at (-0.5+21, -2.5);
		\draw [very thick, in=105, out=-45] (x) to (y);
		
		\coordinate (a) at (-1+21, 0);
		\coordinate (b) at (2+21, 0.7);
		\coordinate[label=above:0] (c) at (1+21, 1.5);
		\draw [very thick] (a) to[out=0, in=-90] (b) to[out=90, in=-25] (c);
		\node[below] at (2+21, 0.2) {$-$};
		\coordinate (d) at (3+21, 1.5);
		\coordinate (e) at (5+21, 0.7);
		\coordinate[label=above:0] (f) at (4+21, 1.5); 
		\draw [very thick] (d) to[out=205, in=90] (b) to[out=-90, in=-90] (e) to[out=90, in=-25] (f);
		
		\coordinate (A) at (-1+12+14, 0.5);
		\coordinate (B) at (2+11+14, 1.2);
		\coordinate[label=above:0] (C) at (1++11+14, 2);
		\draw [very thick] (A)   to[out=0, in=-90]  (B)  to[out=90, in=-25] (C);
		\node[below] at (2+11+14, 0.7) {$+$};
		
		\coordinate[label=above:0] (D) at (0+3+11+14, 2);
		\coordinate (E) at (1+4+10+14.2, 0.5);
		\draw [very thick] (D)  to[out=205, in=90]   (B)  to[out=-90, in=180] (E);
		

\coordinate (x) at (19+9,0);
		\coordinate (y) at (20+9.5, 2.5);
		\coordinate[label=right:0] (z) at (21+9.5, 0);
		\draw [very thick] (x) to[in=-90, out=25] (y) to[in=150, out=-90] (z);

		\node[below] at (19.9+9.7, -0.5+0.8) {$-$};
		\node[right] at (20.3+8.7, 1.2+0.6+1) {$A_4$};
		
	\end{tikzpicture}
	\caption{A curve with an $A_1$-attached Weierstrass tail $E=E_1 \cup D$.  First the $A_3$-singularity is replaced by a $\ov X(A_3)$-atom, then the elliptic bridge is replaced by  a $\ov X(^{\opp}A_3)$-atom and the genus 2 curve by $\ov X(^{\opp}A_4)$-atom.}\label{F:weierstrass}
\end{figure}
We now compute the weight at $T$ as the sum of the contributions of the $r$ atoms $\ov X(A_3)$,  of the $r$ atoms $\ov X(^{\opp}A_3)$ and of  atoms $\ov X(^{\opp}A_4)$ in the central fiber:
$$\begin{aligned}
& \wei_T(K+\psi+\alpha(\delta-\psi)) 
=rw_{\alpha}(A_3)-rw_{\alpha}(A_3)-w_{\alpha}(A_4)=\\
& =-w_{\alpha}(A_4)= 39\alpha-26<0,    
\end{aligned}$$
since  $\alpha<2/3$, contradicting the $\alpha$-semistability of $(C,p_i)$.
\end{enumerate}

\end{proof}

\begin{remark}\label{R:threshold}
The proof of Theorem \ref{thm:U(alpha)} shows that  we have
$$
\mc U_{g,n}(\alpha) \subset \ov{\mc M}_{g,n}(2/3 -\varepsilon) \text{ for } \alpha \in 
\begin{cases}
   (17/28,2/3) & \text{ if } (g,n)=(3,0),\\
(45/68, 2/3) & \text{ if } g=2r+3\geq 5 \text{ and } n=0,\\
(19/29, 2/3) & \text{ otherwise.}
\end{cases}
$$
In particular, it suggests that for odd genus $g\geq 5$ and $n=0$, there is a threshold values at $\alpha=45/68$. This threshold seems to be new, in the sense that it does not appear in the predictions of \cite[Table 3]{AFS16}, \cite[\S 4.4]{FS}, \cite[Table 1]{AlpHye}, \cite[Problem 5.7]{AHL}.
\end{remark}

\bibliographystyle{amsalpha}
\bibliography{Library}

\end{document}